\documentclass[12pt]{article}
\usepackage[margin=1in]{geometry}  % set the margins to 1in on all sides
\usepackage{graphicx}              % to include figures
\usepackage{amsmath}               % great math stuff
\usepackage{amsthm}                % better theorem environments
\usepackage[ansinew]{inputenc}
\usepackage{array,color}
\usepackage{amsxtra}
\usepackage{amstext}
\usepackage{latexsym}
\usepackage{dsfont}
\usepackage[all]{xy}
\usepackage{amsmath,amsfonts,amssymb}
\usepackage{soul}

\usepackage{marginnote}

\usepackage{enumitem}  % http://ctan.org/pkg/enumitem
\usepackage{calc}% http://ctan.org/pkg/calc
% needed for setting standard space after item labels of numbered lists itemize, enumerate, etc.
\setlist{labelindent=1pt,itemsep=.5em}
% command in enmitem package setting standard space after item labels of numbered lists itemize, enumerate, etc.
\setlist[itemize]{leftmargin=1.2cm}
\setlist[enumerate]{itemindent=0em,leftmargin=1.2cm}
\setlist[enumerate,1]{label={\upshape(\roman*)}}

\usepackage{cite}

\usepackage{hyperref}
\hypersetup{
colorlinks   = true,
citecolor    = black,
linkcolor =black,
urlcolor=black,
}

\hypersetup{hidelinks}

\usepackage{authblk}

\setlength{\affilsep}{1em}

\newcommand{\email}[1]{%
    \normalsize\href{mailto:#1}{\color{black}{#1} }}

%\usepackage[backref]{hyperref}
%\usepackage{cite}
%\usepackage{refcheck}
%\usepackage{showkeys}
%\usepackage{hyperref}

%\usepackage{ifpdf}
%\ifpdf
%  \usepackage[colorlinks=true,linkcolor=blue,citecolor=red, final,backref=page,hyperindex]{hyperref}
%\else
%  \usepackage[colorlinks,final,backref=page,hyperindex,hypertex]{hyperref}
%\fi
%\usepackage{cite}

\allowdisplaybreaks

\makeatletter
\newcommand{\subjclass}[2][2020]{%
  \let\@oldtitle\@title%
  \gdef\@title{\@oldtitle\footnotetext{#1 \emph{Mathematics subject classification}: #2}}%
}
\newcommand{\keywords}[1]{%
  \let\@@oldtitle\@title%
  \gdef\@title{\@@oldtitle\footnotetext{\emph{Keywords}: #1}}%
}
\makeatother

\newtheorem{thm}{Theorem}[section]
\newtheorem{cor}[thm]{Corollary}
\newtheorem{lem}[thm]{Lemma}
\newtheorem{prop}[thm]{Proposition}
\theoremstyle{definition}
\newtheorem{defn}[thm]{Definition}
\theoremstyle{remark}
\newtheorem{rmk}[thm]{Remark}
\theoremstyle{remark}

\newtheorem{ex}[thm]{Example}

\numberwithin{equation}{section}

\title{Transposed Hom-Poisson and Hom-pre-Lie Poisson
algebras and bialgebras}

\author[1,2]{Ismail Laraiedh}
\author[3]{Sergei Silvestrov}
\affil[1]{\Affilfont Departement of Mathematics, Faculty of Sciences,
\authorcr \Affilfont Sfax University, Box 1171, 3000 Sfax, Tunisia
\authorcr \Affilfont
\email{ismail.laraiedh@gmail.com}}
\affil[2]{Departement of Mathematics,
\authorcr \Affilfont College of Sciences and Humanities Al Quwaiiyah,
\authorcr \Affilfont Shaqra University, Kingdom of Saudi Arabia
\authorcr \Affilfont
\email{ismail.laraiedh@su.edu.sa}}
\affil[3]{\Affilfont Division of Mathematics and Physics,
\authorcr \Affilfont School of Education, Culture and Communication,
\authorcr \Affilfont M\"{a}lardalen University, Box 883, 72123 V{\"a}ster{\aa}s, Sweden
\authorcr \Affilfont
\email{sergei.silvestrov@mdh.se}}

%\author{Ismail Laraiedh$^{1,2}$,\hspace{0.2cm}
%Sergei Silvestrov$^{3}$ \\
%\small{$^{1}$Departement of Mathematics, Faculty of Sciences, Sfax University, Box 1171, 3000 Sfax, Tunisia,\\ $^{2}$Departement of Mathematics, College of Sciences and Humanities Al Quwaiiyah, \authorcr Shaqra University, Saudi Arabia. %\authorcr
%ismail.laraiedh@gmail.com, ismail.laraiedh@su.edu.sa\authorcr
%$^{3}$ Division of Mathematics and Physics, School of Education, Culture and Communication, M\"{a}lardalen University, Box 883, 72123 V{\"a}ster{\aa}s, Sweden. %\authorcr
%sergei.silvestrov@mdh.se}}
%
%\renewcommand{\thefootnote}{\fnsymbol{footnote}}

\subjclass[2020]{17B61, 17D30, 17B63, 16D20}
\keywords{Transposed Hom-Poisson algebra, Bimodule,
Manin triple, $\mathcal{O}$-operator}

\date{}

\begin{document}
\maketitle

\abstract{
The notions of transposed Hom-Poisson and Hom-pre-Lie Poisson algebras are introduced. Their bimodules and matched pairs are defined and the relevant properties and theorems are given. The notion of Manin triple of transposed Hom-Poisson algebras is introduced, and its equivalence to the transposed Hom-Poisson bialgebras is investigated. The notion of $\mathcal{O}$-operator is exploited to illustrate the relations existing between transposed Hom-Poisson and Hom-pre-Lie Poisson algebras.
}

\section{Introduction}
A Poisson algebra $(A, \mu, \{\cdot,\cdot\})$ consists of a commutative associative algebra $(A, \mu)$ together with a Lie algebra structure $\{\cdot,\cdot\}$, that satisfies, for all $x,y,z\in A$, the Leibniz identity
\begin{equation*}
    \{\mu(x,y),z\}=\mu(\{x,z\},y)+\mu(x,\{y,z\}).
\end{equation*}
Poisson algebras appear in many areas of mathematics and physics being important for integrable systems, Poisson geometry, Poisson brackets and Poisson manifolds, geometry of smooth loops, vertex algebras, quantization theory, deformation quantization, classical mechanics, Hamiltonian mechanics, quantum mechanics, quantum field theory, Lie theory and operads \cite{AgoreMilitaru2015:JacobiPoissonalgs,Arn78,BLLM2017,BhaskaraViswanath88,ChariPressley94:GuideQGr,
Dirac64,Drinfeld87,Dzhumadildaev2002:identderivJacalg,
Frenkel,Fresse,Gerstenhaber64,GinzburgKaledin04,GozeRemm2008:Poisalgnonassalg,GrabowskiMarmo99:NambuPoissNambuJacbrackets,GurseyTze96,Huebschmann90,
KarasevMaslov:NonlinPoissonbrquantiz,Kontsevich03,Li99,Lichnerowicz77,MarklRemm2006,MishchenkoPetrogrRegev2007:PoisPIalg,OdA,Pol97,Xu:noncomPoissonalg94,Vaisman1,Weinstein77}.
Poisson algebras led to development of other fundamental structures such as noncommutative Poisson algebras \cite{Xu:noncomPoissonalg94}, Jacobi algebras (generalized Poisson algebras) \cite{AgoreMilitaru2015:JacobiPoissonalgs,CantariniKac2007,Dzhumadildaev2002:identderivJacalg,Kirillov76,Lichnerowicz77}, Gerstenhaber algebras and
Lie-Rinehart algebras \cite{Gerstenhaber63,KosmannSchwarzbach96,LodayVallette2012,Rinehart63} and Novikov-Poisson algebras \cite{BokutChenZhang2016,Xu:NovikovPoissonalg97}
arising from Novikov algebra in connection with the Poisson
brackets of hydrodynamic type \cite{BalinskiiNovikov85} and Hamiltonian operators
in the formal variational calculus \cite{GelfandDorfman79}.

A transposed Poisson algebra \cite{BaiBaiGuoWu2020:transpPoisNovPois3Lie} is a triple $(A, \mu, \{\cdot,\cdot\})$ consists of a commutative associative algebra $(A, \mu)$ and
a Lie algebra $(A,\{\cdot,\cdot\})$, satisfying, for all $x,y,z\in A$, the transposed Leibniz identity
\begin{equation}\label{trans Poisson}
    2\mu(z,\{x,y\})=\{\mu(z, x),y\}+\{x,\mu(z\cdot y)\}.
\end{equation}
While both the Poisson
algebra and transposed Poisson algebra are built from a commutative associative operation
and a Lie operation, the intersection of the compatibility conditions of
the Poisson algebra and of the transposed Poisson algebra is trivial, meaning that the structures
of transposed Poisson algebras and Poisson algebras are incompatible in this sense   \cite{BaiBaiGuoWu2020:transpPoisNovPois3Lie}.

The theory of Hom-algebras has been initiated in \cite{HartwigLarSil:defLiesigmaderiv, LarssonSilvJA2005:QuasiHomLieCentExt2cocyid,LarssonSilv:quasiLiealg} motivated by quasi-deformations of Lie algebras of vector fields, in particular q-deformations of Witt and Virasoro algebras. Hom-Lie algebras and more general quasi-Hom-Lie algebras were introduced first by Hartwig, Larsson and Silvestrov in  \cite{HartwigLarSil:defLiesigmaderiv} where a general approach to discretization of Lie algebras of vector fields using general twisted derivations ($\sigma$-deriva\-tions) and a general method for construction of deformations of Witt and Virasoro type algebras based on twisted derivations have been developed. The general quasi-Lie algebras, containing the quasi-Hom-Lie algebras and Hom-Lie algebras as subclasses, as well their graded color generalization, the color quasi-Lie algebras including color quasi-hom-Lie algebras, color hom-Lie algebras and their special subclasses the quasi-Hom-Lie superalgebras and hom-Lie superalgebras, have been first introduced in \cite{HartwigLarSil:defLiesigmaderiv,LarssonSilvJA2005:QuasiHomLieCentExt2cocyid,LarssonSilv:quasiLiealg,LSGradedquasiLiealg,LarssonSilv:quasidefsl2,SigSilv:CzechJP2006:GradedquasiLiealgWitt}.
Subsequently, various classes of Hom-Lie admissible algebras have been considered in \cite{ms:homstructure}. In particular, in \cite{ms:homstructure}, the Hom-associative algebras have been introduced and shown to be Hom-Lie admissible, that is leading to Hom-Lie algebras using commutator map as new product, and in this sense constituting a natural generalization of associative algebras as Lie admissible algebras leading to Lie algebras using commutator map. Furthermore, in \cite{ms:homstructure}, more general $G$-Hom-associative algebras including Hom-associative algebras, Hom-Vinberg algebras (Hom-left symmetric algebras), Hom-pre-Lie algebras (Hom-right symmetric algebras), and some other Hom-algebra structures, generalizing $G$-associative algebras, Vinberg and pre-Lie algebras respectively, have been introduced and shown to be Hom-Lie admissible, meaning that for these classes of Hom-algebras, the operation of taking commutator leads to Hom-Lie algebras as well. Also, flexible Hom-algebras have been introduced, connections to Hom-algebra generalizations of derivations and of adjoint maps have been noticed, and some low-dimensional Hom-Lie algebras have been described.
In Hom-algebra structures, defining algebra identities are twisted by linear maps.
Since the pioneering works \cite{HartwigLarSil:defLiesigmaderiv,LarssonSilvJA2005:QuasiHomLieCentExt2cocyid,
LarssonSilv:quasiLiealg,LSGradedquasiLiealg,LarssonSilv:quasidefsl2,ms:homstructure}, Hom-algebra structures have developed in a popular broad area with increasing number of publications in various directions.
Hom-algebra structures include their classical counterparts and open new broad possibilities for deformations, extensions to Hom-algebra structures of representations, homology, cohomology and formal deformations, Hom-modules and hom-bimodules, Hom-Lie admissible Hom-coalgebras, Hom-coalgebras, Hom-bialgebras, Hom-Hopf algebras, $L$-modules, $L$-comodules and Hom-Lie quasi-bialgebras, $n$-ary generalizations of BiHom-Lie algebras and BiHom-associative algebras and generalized derivations, Rota-Baxter operators, Hom-dendriform color algebras, Rota-Baxter bisystems and covariant bialgebras, Rota-Baxter cosystems, coquasitriangular mixed bialgebras, coassociative Yang-Baxter pairs, coassociative Yang-Baxter equation and generalizations of Rota-Baxter systems and algebras, curved $\mathcal{O}$-operator systems and their connections with tridendriform systems and pre-Lie algebras, BiHom-algebras, BiHom-Frobenius algebras and double constructions, infinitesimal BiHom-bialgebras and Hom-dendriform $D$-bialgebras, Hom-algebras have been considered
\cite{AbdaouiMabroukMakhlouf,
AmmarEjbehiMakhlouf:homdeformation,
AttanLaraiedh:2020ConstrBihomalternBihomJordan,
Bakayoko:LaplacehomLiequasibialg,
Bakayoko:LmodcomodhomLiequasibialg,
BakBan:bimodrotbaxt,
BakyokoSilvestrov:HomleftsymHomdendicolorYauTwi,
BakyokoSilvestrov:MultiplicnHomLiecoloralg,
BenHassineChtiouiMabroukNcib19:CohomLiedeformBiHomleftsym,
BenMakh:Hombiliform,
BenAbdeljElhamdKaygorMakhl201920GenDernBiHomLiealg,
CaenGoyv:MonHomHopf,
ChtiouiMabroukMakhlouf1,
ChtiouiMabroukMakhlouf2,
DassoundoSilvestrov2021:NearlyHomass,
EbrahimiFardGuo08,
GrMakMenPan:Bihom1,
HassanzadehShapiroSutlu:CyclichomolHomasal,
HounkonnouDassoundo:centersymalgbialg,
HounkonnouHoundedjiSilvestrov:DoubleconstrbiHomFrobalg,
HounkonnouDassoundo:homcensymalgbialg,
kms:narygenBiHomLieBiHomassalgebras2020,
Laraiedh1:2021:BimodmtchdprsBihomprepois,
LarssonSigSilvJGLTA2008:QuasiLiedefFttN,
LarssonSilvJA2005:QuasiHomLieCentExt2cocyid,
LarssonSilv:quasidefsl2,
LarssonSilvestrovGLTMPBSpr2009:GenNComplTwistDer,
LiuMakhMenPan:RotaBaxteropsBiHomassalg,
MaMakhSil:CurvedOoperatorSyst,
MaMakhSil:RotaBaxbisyscovbialg,
MaMakhSil:RotaBaxCosyCoquasitriMixBial,
MaZheng:RotaBaxtMonoidalHomAlg,
MabroukNcibSilvestrov2020:GenDerRotaBaxterOpsnaryHomNambuSuperalgs,
Makhl:HomaltHomJord,
Makhlouf2010:ParadigmnonassHomalgHomsuper,
MakhloufHomdemdoformRotaBaxterHomalg2011,
MakhSil:HomHopf,
MakhSilv:HomDeform,
MakhSilv:HomAlgHomCoalg,
MakYau:RotaBaxterHomLieadmis,
RichardSilvestrovJA2008,
RichardSilvestrovGLTbnd2009,
SaadaouSilvestrov:lmgderivationsBiHomLiealgebras,
ShengBai:homLiebialg,
Sheng:homrep,
SigSilv:GLTbdSpringer2009,
SilvestrovParadigmQLieQhomLie2007,
SilvestrovZardeh2021:HNNextinvolmultHomLiealg,
QSunHomPrealtBialg,
Yau:ModuleHomalg,
Yau:HomEnv,
Yau:HomHom,
Yau:HombialgcomoduleHomalg,
Yau:HomYangBaHomLiequasitribial,
YauHomMalcevHomalternHomJord}.

A Hom-type generalization of Poisson algebras, called Hom-Poisson algebras, was introduced in \cite{MakhSilv:HomDeform}. We aim in this paper to introduce the notions of transposed Hom-Poisson and Hom-pre-Lie Poisson algebras. The concept of bimodules and matched pairs are constructed, and we give some related results. Next, we introduce notion of Manin triple of transposed Hom-Poisson algebras and transposed Hom-Poisson bialgebras and then give the equivalence between them. Also,
we introduce and study the notion of an $\mathcal{O}$-operator of transposed Hom-Poisson and Hom-pre-Lie Poisson algebras.

The paper is organized as follows. In Section \ref{sec2}, we introduce the notion of transposed Hom-Poisson algebra, and explain connections between transposed Hom-Poisson algebras and Hom-
Poisson algebras. Moreover, we define the notion of a bimodule of a transposed Hom-Poisson algebra and
provide some properties. In Section \ref{sec3},
we introduce the notion of Manin triples for  transposed Hom-Poisson algebras and transposed Hom-Poisson bialgebras. Also, the equivalences between their Manin triples are established. In Section \ref{sec4}, we establish definition of Hom-pre-Lie Poisson algebra and we develop
some construction theorems. Their bimodule and matched pair are defined and their related
relevant properties are also given. In Section \ref{sec5}, we introduce and study the notion of an $\mathcal{O}$-operator of transposed Hom-Poisson algebras generalizing the notion of Rota-Baxter operators. Their relationship with Hom-pre-Lie Poisson algebras is also described.

%%%%%%%%%%%%%%%%%%%%%%%%%%%%%%%%%%%%%%%%%%%%
\section{Transposed Hom-Poisson algebras}\label{sec:HomtransposedPoissonalgebras}\label{sec2}
%%%%%%%%%%%%%%%%%%%%%%%%%%%%%%%%%%%%%%%%%%%%
Throughout this paper $\mathbb{K}$ is an algebraically closed field
of characteristic $0$ and $A$ is a $\mathbb{K}$-vector space.
We refer to the standard one-to-one correspondence between linear maps
$\mu : V_1\otimes\dots\otimes V_n \rightarrow W$ and multilinear maps
$\mu : V_1\times\dots\times V_n \rightarrow W$ given by
$\mu(v_1,\dots,v_n)=\mu(v_1\otimes\dots\otimes v_n)$, whenever the same notation is used for these maps.

In this section, we introduce the notion of transposed Hom-Poisson algebra with a connection to commutative Hom-associative algebra and Hom-Lie algebra. Next, we study the notion of bimodule and  matched pair of transposed Hom-Poisson algebras.

\begin{defn}
A Hom-module is a pair $(M,\alpha_M)$ consisting of a
$\mathbb{K}$-module $M$ and a linear map $\alpha_M:
M\longrightarrow M$. A morphism $f: (M,\alpha_M)\rightarrow
(N,\alpha_N)$ of Hom-modules is a linear map
 $f: M\longrightarrow N$ such that $f\alpha_M=\alpha_N f$.
\end{defn}

\begin{defn}
A Hom-algebra is a triple $(A,\mu,\alpha)$ where $(A,\alpha)$ is
a Hom-module and $\mu : A^{\otimes 2} \rightarrow A$ is a linear map.
\end{defn}

\begin{defn}
A Hom-algebra $(A, \mu,\alpha)$ is called multiplicative, if
the linear map $\alpha:A\to A$ is moreover an algebra homomorphism of the algebra $(A,\mu)$, that is,  $$\alpha\circ\mu=\mu\circ(\alpha\otimes\alpha),$$
or equivalently, in multilinear maps notation,
$\alpha\circ\mu (x,y)=\mu(\alpha(x),\alpha(y)),$
for $x,y\in A.$
\end{defn}

In any Hom-algebra $(A,\mu,\alpha)$, Hom-associator ($\alpha$-associator) is defined by
$$ as_{A}(x,y,z)=\mu(\mu(x,y),\alpha(z))-\mu(\alpha(x),\mu(y,z))$$
for all $x,y,z \in A$. Hom-associators are generalizations to Hom-algebras of the associators in algebras (when $\alpha=id$).
\begin{defn}
A Hom-associative algebra is a Hom-algebra
$(A,\mu,\alpha)$ satisfying
\begin{align}\label{Homass:homassociator}
    as_{A}(x,y,z)=0, &&  \mbox{(Hom-associativity)}
\end{align}
or equivalently,
$\mu(\alpha(x),\mu(y,z))=\mu(\mu(x,y),\alpha(z)),$
for $x,y,z\in A$.
\end{defn}

\begin{defn}
A commutative Hom-associative algebra is a Hom-algebra
$(A,\mu,\alpha)$ satisfying, for all $x,y,z\in A$,
\begin{align}\label{Homalg:Commutativityidentity}
&\mu(x,y)=\mu(y,x), & \mbox{(Commutativity)}
    \\
% \label{Homass:homassociator}
    &as_{A}(x,y,z)=0. & \mbox{(Hom-associativity)}
    \end{align}
\end{defn}
\begin{ex}
	Consider a two-dimensional $\mathbb{K}$-linear space $ A$ with basis $\{ e_1, e_2\}$.
	\begin{enumerate}[label=\arabic*)]
		\item
		The triple $(A, \mu,\alpha)$ with the bilinear product and linear map defined on $A$ by
\begin{align*}
\mu(e_1, e_1)= -e_1,\quad & \mu(e_1, e_2)= \mu(e_2,e_1)=e_2,\quad \mu(e_2, e_2)=e_1, \\
& \alpha(e_1)=e_1,\quad \alpha(e_2)=-e_2,
\end{align*}
is a commutative Hom-associative  algebra.
		\item The triple $(A, \mu,\alpha)$ with the bilinear product and linear map defined on $A$ by
\begin{align*}
\mu(e_1, e_1)= e_1, &\quad \mu(e_1, e_2)= \mu(e_2,e_1)=0, \quad \mu(e_2, e_2)=e_2, \\
& \quad \alpha(e_1)=e_1, \quad \alpha(e_2)=0.
\end{align*}
is a commutative Hom-associative  algebra.
\end{enumerate}
\end{ex}
\begin{ex}
	Consider a three-dimensional $\mathbb{K}$-linear space $ A$ with basis $\{ e_1, e_2, e_3\}$.
	\begin{enumerate}[label=\arabic*)]
		\item
		The triple $(A, \mu,\alpha)$ with the bilinear product and linear map defined on $A$ by
		\begin{align*}
		e_1\cdot e_1&=e_1, \quad e_2\cdot e_2= e_2+e_3, \quad e_3\cdot e_3=e_2+e_3\\
		e_2\cdot e_3&=e_2+e_3, \quad e_3\cdot e_2=e_2+e_3, \\
\alpha(e_1)&=e_1,
		\end{align*}
		where the non written products
and images of $\alpha$ are  zero,
is a commutative Hom-associative  algebra.
		\item
		The triple $(A, \mu,\alpha)$ with the bilinear product and linear map defined on $A$ by
		\begin{align*}
        e_1\cdot e_1 &=p_1e_1, \quad e_2\cdot e_2= p_2e_2 , \quad e_3\cdot e_3=p_3 e_3 ,\\
		\alpha(e_1)&=e_1, \quad \alpha(e_2)=e_2,
\end{align*}
		where the non written products and images of $\alpha$ are zero,
is a commutative Hom-associative  algebra.
	\end{enumerate}	
\end{ex}
\begin{defn}
A derivation of a commutative Hom-associative algebra $(A, \mu,\alpha)$ is a linear map
$D : A \rightarrow A$ satisfying for all $x, y \in A$,
\begin{align}
    \alpha\circ D&=D\circ \alpha, \label{derivation1}\\
    D(x\cdot y)&=D(x)\cdot y+x\cdot D(y).
\end{align}
\end{defn}
We will use in this article a definition of bimodule of a commutative Hom-associative algebras
including Hom-module map condition \eqref{Cond2}, while we note that there are also other definitions of Hom-modules and Hom-bimodules of Hom-associative algebras, for example the more general notions requiring only \eqref{Cond1},
\cite{Bakayoko:LmodcomodhomLiequasibialg,
BakBan:bimodrotbaxt,MakhSil:HomHopf,
MakhSilv:HomDeform,
HassanzadehShapiroSutlu:CyclichomolHomasal,
Yau:ModuleHomalg,
Yau:HombialgcomoduleHomalg}.
\begin{defn}
Let $(A, \cdot, \alpha)$ be a commutative Hom-associative algebra and let $(V, \beta)$ be a Hom-module. Let $ s: A \rightarrow gl(V) $ be a linear map. The triple $(s, \beta, V)$ is called a bimodule of $(A, \cdot, \alpha)$ if for all $ x, y \in  A, v \in V $,
\begin{align}
\label{Cond1}
s(x\cdot y)\beta(v)=s(\alpha(x))s(y)v, \\
\label{Cond2}
\beta(s(x)v) = s(\alpha(x))\beta(v).
\end{align}
\end{defn}
\begin{prop}\label{ass1}
Let $(s, \beta, V)$ be a bimodule of a commutative Hom-associative algebra $(A, \cdot, \alpha)$.
Then, the direct sum $A \oplus V$ of vector spaces is a commutative Hom-associative algebra with multiplication defined for all $ x_{1}, x_{2} \in  A, v_{1}, v_{2} \in V$ by
\begin{align*}
(x_{1} + v_{1}) \cdot' (x_{2} + v_{2}) &=x_{1} \cdot x_{2} + (s(x_{1})v_{2} + s(x_{2})v_{1}),\cr
(\alpha\oplus\beta)(x_{1} + v_{1}) &=\alpha(x_{1}) + \beta(v_{1}).
\end{align*}
\end{prop}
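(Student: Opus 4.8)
The plan is to verify directly the only two axioms a commutative Hom-associative algebra must satisfy according to the definition above, namely that $\cdot'$ is commutative and that the Hom-associator of $(A\oplus V, \cdot', \alpha\oplus\beta)$ vanishes. I would fix generic elements $X=x_1+v_1$, $Y=x_2+v_2$, $Z=x_3+v_3$ with $x_i\in A$ and $v_i\in V$, since every element of $A\oplus V$ is of this form, and read off the $A$-component and the $V$-component of each product separately.

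Commutativity is immediate: expanding $X\cdot' Y$ and $Y\cdot' X$, the $A$-components coincide because $(A,\cdot,\alpha)$ is commutative by \eqref{Homalg:Commutativityidentity}, and the $V$-components $s(x_1)v_2+s(x_2)v_1$ and $s(x_2)v_1+s(x_1)v_2$ agree by the symmetry of the defining formula.

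The substantive step is Hom-associativity, i.e. showing $as_{A\oplus V}(X,Y,Z)=(X\cdot' Y)\cdot'(\alpha\oplus\beta)(Z)-(\alpha\oplus\beta)(X)\cdot'(Y\cdot' Z)=0$. I would expand both products and split the result into its $A$-part and its $V$-part. The $A$-part is precisely $as_A(x_1,x_2,x_3)$, which vanishes by the Hom-associativity \eqref{Homass:homassociator} of $(A,\cdot,\alpha)$. The $V$-part consists of six terms, which I would group by the vector $v_1$, $v_2$, $v_3$ on which they act, two terms per group. The $v_3$-group cancels directly via \eqref{Cond1}, since $s(x_1\cdot x_2)\beta(v_3)=s(\alpha(x_1))s(x_2)v_3$. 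For the $v_1$- and $v_2$-groups I would first use the commutativity of $\cdot$ to transpose the two algebra arguments inside $s(x\cdot y)$ and then apply \eqref{Cond1}: this rewrites $s(x_2\cdot x_3)\beta(v_1)=s(x_3\cdot x_2)\beta(v_1)=s(\alpha(x_3))s(x_2)v_1$ and $s(\alpha(x_1))s(x_3)v_2=s(x_1\cdot x_3)\beta(v_2)=s(x_3\cdot x_1)\beta(v_2)=s(\alpha(x_3))s(x_1)v_2$, matching the remaining terms exactly. Hence every group cancels and the $V$-part is zero.

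The main obstacle is the bookkeeping in the $V$-part: the cancellation works only because \eqref{Cond1} is combined with the commutativity of $\cdot$ in the right places, as the bimodule identity \eqref{Cond1} by itself does not symmetrize the two algebra slots of $s(x\cdot y)$. Notably, neither \eqref{Cond2} nor the multiplicativity of $\alpha$ is needed for this conclusion, which is consistent with the target axioms being only commutativity and Hom-associativity; condition \eqref{Cond2} would enter if one additionally wished $\alpha\oplus\beta$ to be multiplicative for $\cdot'$. Once all three groups vanish, we get $as_{A\oplus V}\equiv 0$, and together with commutativity this establishes that $(A\oplus V,\cdot',\alpha\oplus\beta)$ is a commutative Hom-associative algebra.
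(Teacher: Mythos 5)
Your proof is correct and follows essentially the same route as the paper's: a direct expansion of both sides of the Hom-associativity identity, with the $A$-component handled by Hom-associativity of $(A,\cdot,\alpha)$ and the $V$-component cancelled term by term using \eqref{Cond1} combined with the commutativity of $\cdot$. Your side observation that \eqref{Cond2} is not needed (it would only enter to make $\alpha\oplus\beta$ multiplicative for $\cdot'$) is accurate and consistent with the paper, whose proof likewise never invokes \eqref{Cond2}.
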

\begin{proof}
For all $x_1,x_2\in A,~v_1,v_2\in V$,
\begin{align*}
&(x_1+v_1)\cdot'(x_2+v_2)=x_1\cdot x_2+s(x_1)v_2+s(x_2)v_1\\
    &\quad =x_2\cdot x_1+s(x_2)v_1+s(x_1)v_2 = (x_2+v_2)\cdot'(x_1+v_1), \\
&\Big[(x_1+v_1)\cdot'(x_2+v_2)\Big]\cdot'(\alpha+\beta)(x_3+v_3) \\
    &\quad =\Big[x_1\cdot x_2+s(x_1)v_2+s(x_2)v_1\Big]\cdot'(\alpha(x_3)+\beta(v_3))\\
    &\quad =
    (x_1\cdot x_2)\cdot \alpha(x_3)+ s(x_1\cdot x_2)\beta(v_3)+s(\alpha(x_{3}))(s(x_1)v_2+s(x_2)v_1)\\
    &\quad = \alpha(x_1)\cdot(x_2\cdot x_3)+s(\alpha(x_1))s(x_2)\beta(v_3)+s(x_3\cdot x_1)v_2+s(x_3\cdot x_2)v_1~~(by \eqref{Cond1})\\
    &\quad =(\alpha+\beta)(x_1+v_1)\cdot'\Big[(x_2+v_2)\cdot '(x_3+v_3)\Big].
\end{align*}
Then, $(A \oplus V, \cdot', \alpha + \beta)$ is a commutative Hom-associative algebra.
\end{proof}
The commutative Hom-associative algebra constructed in Proposition \ref{ass1} is denoted
by $(A \oplus V, \cdot', \alpha + \beta)$ or $A \times_{s, \alpha, \beta} V.$
\begin{thm}\label{matched ass}
Let $\mathcal{A}=(A,\cdot_A,\alpha)$ and $\mathcal{B}=(B,\cdot_B,\beta)$ be two commutative Hom-associative algebras. Suppose that there are linear maps $s_A:A\rightarrow gl(B)$
and $s_B:B\rightarrow gl(A)$ such that $(s_A,\beta,B)$ is a bimodule of $\mathcal{A}$, and $(s_B,\alpha,A)$ is a bimodule of $\mathcal{B}$ satisfying, for any $x,y\in A,~a,b\in B$,
\begin{align}
\label{comm1}
    s_A(\alpha(x))(a\cdot_B b)&=\beta(a)\cdot_B(s_A(x)b)+s_A(s_B(b)x)\beta(x), \\
\label{comm2}
   (s_A(x)a)\cdot_B\beta(b)+s_A(s_B(a)x)\beta(b)&=\beta(a)\cdot_B(s_A(x)b)+s_A(s_B(b)x)\beta(a), \\
\label{comm3}
    s_B(\beta(a))(x\cdot_A y)&=\alpha(x)\cdot_A(s_B(a)y)+s_B(s_A(y)a)\alpha(a), \\
\label{comm4}
   (s_B(a)x)\cdot_A\alpha(y)+s_B(s_A(x)a)\alpha(y)&=\alpha(x)\cdot_A(s_B(a)y)+s_B(s_A(y)a)\alpha(x).
\end{align}
Then, $(A,B,s_A,\beta,s_B,\alpha)$ is called a matched pair of
commutative Hom-associative algebras. In this case, there is a commutative Hom-associative algebra structure on the direct sum
$A\oplus B$ of the underlying vector spaces of $A$ and $B$ given by
$$\begin{array}{llllll}
(x + a) \cdot (y + b)&=&x \cdot_A y + (s_A(x)b + s_A(y)a)+a \cdot_B b + (s_B(a)y + s_B(b)x),\cr
(\alpha\oplus\beta)(x + a)&=&\alpha(x) + \beta(a).
\end{array}$$
\end{thm}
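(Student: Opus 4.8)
The plan is to verify the two defining properties of a commutative Hom-associative algebra for the triple $(A\oplus B,\cdot,\alpha\oplus\beta)$, namely commutativity of $\cdot$ and vanishing of the associator; note that the definition of commutative Hom-associative algebra in the excerpt asks only for these two, so multiplicativity of $\alpha\oplus\beta$ need not be checked. Commutativity is immediate: writing $X=x+a$ and $Y=y+b$, the product $X\cdot Y$ has $A$-part $x\cdot_A y+s_B(a)y+s_B(b)x$ and $B$-part $a\cdot_B b+s_A(x)b+s_A(y)a$, and each is symmetric in $(X,Y)$ because $\cdot_A$ and $\cdot_B$ are commutative and the cross-terms occur in the symmetric combinations $s_B(a)y+s_B(b)x$ and $s_A(x)b+s_A(y)a$. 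Hence $X\cdot Y=Y\cdot X$.

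The real content is Hom-associativity. First I would take general elements $X=x+a$, $Y=y+b$, $Z=z+c$ and expand the associator $(X\cdot Y)\cdot(\alpha\oplus\beta)(Z)-(\alpha\oplus\beta)(X)\cdot(Y\cdot Z)$ using the definition of $\cdot$ together with $(\alpha\oplus\beta)(z+c)=\alpha(z)+\beta(c)$. Since the underlying vector space is the direct sum $A\oplus B$, the associator decomposes as a sum of an $A$-valued part and a $B$-valued part, and it suffices to show that each part vanishes separately.

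For the $B$-valued part I would collect all resulting terms and sort them into three groups. The terms built solely from $\cdot_B$ (those comparing $(a\cdot_B b)\cdot_B\beta(c)$ with $\beta(a)\cdot_B(b\cdot_B c)$) cancel by Hom-associativity \eqref{Homass:homassociator} of $\mathcal{B}$. The terms in which the map $s_A$ acts on a $\cdot_A$-product or is iterated (e.g. $s_A(x\cdot_A y)\beta(c)$ versus $s_A(\alpha(x))s_A(y)c$) cancel by the bimodule axioms \eqref{Cond1}, \eqref{Cond2} for $(s_A,\beta,B)$. The genuinely mixed cross-terms, i.e. those containing both an $s_A$ and an $s_B$ (such as $s_A(\alpha(x))(b\cdot_B c)$ and terms of the form $s_A(s_B(\cdot)\cdot)\beta(\cdot)$), are exactly the combinations controlled by the compatibility identities \eqref{comm1} and \eqref{comm2}; substituting these makes the remaining terms cancel. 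The $A$-valued part is then handled by the identical argument under the symmetry $(A,\alpha,s_A,\cdot_A)\leftrightarrow(B,\beta,s_B,\cdot_B)$, which interchanges \eqref{comm1},\eqref{comm2} with \eqref{comm3},\eqref{comm4}, so no separate computation is needed once the bookkeeping for one part is in place.

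The hard part will be the bookkeeping rather than any single idea: expanding both nested products produces a large number of terms in each of the $A$- and $B$-components, and the delicate step is grouping them so that each group matches \emph{exactly} one of \eqref{Homass:homassociator}, \eqref{Cond1}, \eqref{Cond2}, or one of \eqref{comm1}--\eqref{comm4}, with every term accounted for and none left over. I would organize this by first isolating the purely-$\cdot$ terms, then the single-module-action terms, and finally the $s_A s_B$ cross-terms, checking at each stage that the residual combination is precisely one of the four compatibility conditions.
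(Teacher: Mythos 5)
Your proposal is correct and follows essentially the same route as the paper's proof: expand both sides of the Hom-associativity condition for the product on $A\oplus B$ and cancel terms in groups using commutativity, Hom-associativity of $\mathcal{A}$ and $\mathcal{B}$, the bimodule condition \eqref{Cond1}, and the compatibility identities \eqref{comm1}--\eqref{comm4}. Your additional organizational devices (splitting into $A$- and $B$-valued components and invoking the $A\leftrightarrow B$ symmetry to avoid redoing the computation for the second component) are sound refinements of the same computation, not a different argument.
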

\begin{proof}
For any $x,y,z\in A$ and $a,b,c\in B$,
\begin{align*}
&(\alpha+\beta)(x+a)\cdot((y+b)\cdot(z+c))\\
&\quad =(\alpha(x)+\beta(a))(y\cdot_A z+s_B(b)z+s_B(c)y+b\cdot c+s_A(y)c+s_A(z)b)\\
&\quad =\alpha(x)\cdot_A(y\cdot_A z)+\alpha(x)\cdot_A s_B(b)z+\alpha(x)\cdot_A s_B(c)y+s_B(\beta(a))(y\cdot_A z)\\
&\quad \quad + s_B(\beta(a))s_B(b)z+s_B(\beta(a))s_B(c)y+s_B(b\cdot_B c)\alpha(x)+s_B(s_A(y)c)\alpha(x)\\
&\quad \quad +s_B(s_A(z)b)\alpha(x)+\beta(a)\cdot_B(b\cdot_B c)+\beta(a)\cdot_B s_A(y)c+\beta_A(\alpha(x))s_A(y)c\\
&\quad \quad +s_A(\alpha(x))s_A(z)b+s_A(y\cdot_A z)\beta(a)+s_A(s_A(b)z)\beta(a)+s_A(s_B(c)y)\beta(a), \\
&((x+a)\cdot(y+b))\cdot(\alpha+\beta)(z+c)\\
&\quad =(x\cdot_A y+s_B(a)y+s_B(b)x+a\cdot_B b+s_A(x)b+s_A(y)a)\cdot(\alpha(z)+\beta(c))\\
&\quad =(x\cdot_A y)\cdot_A\alpha(z)+s_B(a)y\cdot_A\alpha(z)+s_B(b)x\cdot_A\alpha(z)+s_B(a\cdot_B b)\alpha(z)\\
&\quad \quad +s_B(s_A(x)b)\alpha(z)+s_B(s_A(y)a)\alpha(z)+s_B(\beta(c))(x\cdot_A y)+s_A(\beta(c))s_B(a)y\\
&\quad \quad +s_B(\beta(c))s_B(b)x+(a\cdot_B b)\cdot_B\beta(c)+(s_A(x)b)\cdot_B\beta(c)+(s_A(y)a)\cdot_B\beta(c)\\
&\quad \quad +r_A(\alpha(z))(a\cdot_B b)+s_A(\alpha(z))(s_A(x)b)+s_A(\alpha(z))(s_A(y)a)+s_A(x\cdot_A y)\beta(c)\\
&\quad \quad +s_A(s_B(a)y)\beta(c)+(s_B(b)x)\beta(c).
\end{align*}
Then, by \eqref{Homalg:Commutativityidentity}, \eqref{Homass:homassociator} and \eqref{comm1}-\eqref{comm4}, we deduce that $(\alpha+\beta)(x+a)\cdot((y+b)\cdot(z+c))=((x+a)\cdot(y+b))\cdot(\alpha+\beta)(z+c)$.
\end{proof}
\begin{defn}[\cite{HartwigLarSil:defLiesigmaderiv}]
A Hom-Lie algebra is a triple $(A,[\cdot,\cdot],\alpha)$ consisting
of a linear space $A$, bilinear map (bilinear product)
$[\cdot,\cdot]:A\otimes A\rightarrow A$, and a linear map
$\alpha:A\rightarrow A$ satisfying, for all $x, y, z\in  A$,
\begin{align} \label{eq_hom_skewsym_identity}
[x,y]=-[y,x],  & \quad \quad \mbox{(Skew-symmetry)}\\
\label{eq_hom_jacobi_identity}
[\alpha(x),[y,z]]+[\alpha(y), [z,x]]+ [\alpha(z), [x,y]] = 0. & \quad \quad \mbox{\rm (Hom-Jacobi identity)}
\end{align}
\end{defn}
\begin{ex}
Let $(A,\cdot,\alpha)$ be a commutative Hom-associative algebra and $D_1,D_2$
be commuting derivations (that is, $D_1D_2=D_2D_1$). Then, there is a Hom-Lie
algebra $(A,[\cdot,\cdot],\alpha)$, where for all $x,y\in A$,
\begin{equation}\label{eq:2der}
[x,y]=D_1(x)\cdot D_2(y)-D_1(y)\cdot D_2(x).
\end{equation}
\end{ex}
\begin{defn}[\cite{AmmarEjbehiMakhlouf:homdeformation,Sheng:homrep}]
Let $(A,[\cdot,\cdot],\alpha)$ be a Hom-Lie algebra and $(V,\beta)$ be a Hom-module. Let $\rho:A\rightarrow gl(V)$ be a linear map. The triple $(\rho,\beta,V)$ is called a representation of $A$ if for all $x,y\in A,~v\in V$,
\begin{eqnarray}\rho([x,y])\beta(v)&=&\rho(\alpha(x))\circ\rho(y)v-\rho(\alpha(y))\circ\rho(x)v,\label{repLie1}\\
\beta(\rho(x)v)&=&\rho(\alpha(x))\beta(v)\label{repLie3}.
\end{eqnarray}
\end{defn}

\begin{prop}\label{pro11}
Let $(\rho, \beta, V)$ be a representation of a Hom-Lie algebra
$(A, \{\cdot,\cdot\}, \alpha)$. Then, the direct sum $A \oplus V$ of vector
spaces is turned into a Hom-Lie algebra by defining the
multiplication in $A\oplus V $ by
\begin{eqnarray*}
[x_1+v_1,x_2+v_2] &=&\{x_1,x_2\}+\rho(x_1)v_2-\rho(x_2)v_1,\\
(\alpha\oplus\beta)(x_1+v_1) &=&\alpha(x_1)+\beta(v_1).
\end{eqnarray*}
\end{prop}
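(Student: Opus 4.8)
The plan is to verify directly the two defining axioms of a Hom-Lie algebra for the bracket $[\cdot,\cdot]$ and the twisting map $\alpha\oplus\beta$ on $A\oplus V$: skew-symmetry \eqref{eq_hom_skewsym_identity} and the Hom-Jacobi identity \eqref{eq_hom_jacobi_identity}. Skew-symmetry is immediate. Writing $X_i=x_i+v_i$, the defining formula gives $[X_1,X_2]=\{x_1,x_2\}+\rho(x_1)v_2-\rho(x_2)v_1$; swapping the two arguments interchanges the roles of the indices, so using the skew-symmetry of $\{\cdot,\cdot\}$ in $A$ one obtains $[X_2,X_1]=-[X_1,X_2]$ with no further hypotheses.

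The real content is the Hom-Jacobi identity. First I would expand each of the three cyclic terms $[(\alpha\oplus\beta)(X_1),[X_2,X_3]]$, and its cyclic images, using the bracket formula together with $(\alpha\oplus\beta)(X_1)=\alpha(x_1)+\beta(v_1)$. Each such term splits into an $A$-valued component and a $V$-valued component. The $A$-valued components are exactly $\{\alpha(x_1),\{x_2,x_3\}\}$ together with its two cyclic images, whose sum vanishes by the Hom-Jacobi identity \eqref{eq_hom_jacobi_identity} for $(A,\{\cdot,\cdot\},\alpha)$; this disposes of the $A$-part of the computation at once.

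The main step is then to show that the $V$-valued part of the cyclic sum also vanishes. Collecting terms, the $V$-component of $[(\alpha\oplus\beta)(X_1),[X_2,X_3]]$ is $\rho(\alpha(x_1))\bigl(\rho(x_2)v_3-\rho(x_3)v_2\bigr)-\rho(\{x_2,x_3\})\beta(v_1)$, and similarly under cyclic permutation. I would regroup the nine resulting terms according to which vector $v_1,v_2,v_3$ they act on. The coefficient of $v_1$, for instance, is $\rho(\alpha(x_2))\rho(x_3)v_1-\rho(\alpha(x_3))\rho(x_2)v_1-\rho(\{x_2,x_3\})\beta(v_1)$, which is precisely the representation compatibility condition \eqref{repLie1} applied to $x_2,x_3,v_1$, hence zero; the coefficients of $v_2$ and $v_3$ vanish by the same identity after cyclic relabeling. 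I expect this regrouping-by-$v_i$ to be the only delicate bookkeeping in the argument, and it is the sole place where \eqref{repLie1} enters (the second representation axiom \eqref{repLie3} is not required for the Hom-Lie axioms here, since the definition in use imposes neither multiplicativity of $\alpha\oplus\beta$ nor any intertwining condition). Combining the vanishing of both the $A$- and $V$-components yields the Hom-Jacobi identity, which together with skew-symmetry completes the proof.
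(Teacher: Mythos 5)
Your proof is correct. Note that the paper states Proposition \ref{pro11} without any proof (it is used as a known construction, with the semi-direct product notation $A \times_{\rho,\alpha,\beta} V$ introduced immediately after), so there is no argument of the paper to compare against; your direct verification is the standard one and fills this gap. The structure of your argument is sound at every step: skew-symmetry is immediate, the $A$-component of the cyclic sum vanishes by the Hom-Jacobi identity \eqref{eq_hom_jacobi_identity} in $A$, and grouping the nine $V$-valued terms by which $v_i$ they act on reduces each of the three groups exactly to an instance of \eqref{repLie1}. Your side remark is also correct and worth keeping: since the paper's definition of a Hom-Lie algebra requires only skew-symmetry and the Hom-Jacobi identity (multiplicativity of the twisting map is a separate, optional condition in this paper), the second representation axiom \eqref{repLie3} is genuinely not needed here; it would only enter if one additionally wanted $\alpha\oplus\beta$ to be a multiplicative structure map for the new bracket.
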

We denote such a Hom-Lie algebra by $(A \oplus V, [\cdot,\cdot], \alpha+ \beta),$
or $A \times_{\rho, \alpha, \beta} V.$

Now, we introduce the notion of matched pair of Hom-Lie algebra
\begin{thm}[\cite{ShengBai:homLiebialg}] \label{matched Lie}
Let $(A,\{\cdot,\cdot\}_A,\alpha)$ and $(B,\{\cdot,\cdot\}_{B},\beta)$ be two Hom-Lie algebras. Suppose that there are linear maps $\rho_A:A\rightarrow gl(B)$
and $\rho_B:B\rightarrow gl(A)$ such that $(\rho_A,\beta,B)$ is a representation of $A$ and $(\rho_B,\alpha,A)$ is a representation of $B$ satisfying for any $x,y\in A,~a,b\in B$,
\begin{eqnarray}
&\begin{array}{lll}
\rho_B(\beta(a))\{x,y\}_A&=&\{\rho_B(a)x,\alpha(y)\}_A+\{\alpha(x),\rho_B(a)(y)\}_A \\
&+&\rho_B(\rho_A(y)(a))\alpha(x)-\rho_B(\rho_A(x)(a))\alpha(y),
\end{array}
\label{Lie1}
\\
&\begin{array}{lll}
\rho_A(\alpha(x))\{a,b\}_B&=&\{\rho_A(x)a,\beta(b)\}_B+\{\beta(a),\rho_A(x)(b)\}_B \\
&+&\rho_A(\rho_B(b)(x))\beta(a)-\rho_A(\rho_B(a)(x))\beta(b).
\end{array}
\label{Lie2}
\end{eqnarray}
Then, $(A,B,\rho_A,\beta,\rho_B,\alpha)$ is called a matched pair of
Hom-Lie algebras. In this case, there is a Hom-Lie algebra structure on the vector
space $A\oplus B$ of the underlying vector spaces of $A$ and $B$ given by
\begin{align}
[x+a,y+b]&=\{x,y\}_A+\rho_A(x)b-\rho_A(y)a\\
&\quad \quad + \{a,b\}_B+\rho_B(a)y-\rho_B(b)x,\cr
(\alpha\oplus\beta)(x + a)&=\alpha(x) + \beta(a).
\end{align}
\end{thm}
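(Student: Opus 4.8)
The plan is to verify directly that $(A\oplus B,[\cdot,\cdot],\alpha\oplus\beta)$ satisfies the two defining axioms of a Hom-Lie algebra: skew-symmetry \eqref{eq_hom_skewsym_identity} and the Hom-Jacobi identity \eqref{eq_hom_jacobi_identity}. Skew-symmetry is immediate. Since $\{\cdot,\cdot\}_A$ and $\{\cdot,\cdot\}_B$ are themselves skew-symmetric and the four cross terms $\rho_A(x)b$, $-\rho_A(y)a$, $\rho_B(a)y$, $-\rho_B(b)x$ change sign under the swap $x+a\leftrightarrow y+b$, one reads off $[x+a,y+b]=-[y+b,x+a]$ using nothing beyond the skew-symmetry of the two given brackets.

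The substance is the Hom-Jacobi identity. First I would expand the cyclic sum $\sum_{\circlearrowleft}[(\alpha\oplus\beta)(x+a),[y+b,z+c]]$ and split every monomial according to whether it lands in $A$ or in $B$. Writing the inner bracket $[y+b,z+c]$ with $A$-component $P=\{y,z\}_A+\rho_B(b)z-\rho_B(c)y$ and $B$-component $Q=\rho_A(y)c-\rho_A(z)b+\{b,c\}_B$, the outer bracket against $\alpha(x)+\beta(a)$ contributes the $A$-component $\{\alpha(x),P\}_A+\rho_B(\beta(a))P-\rho_B(Q)\alpha(x)$ together with a mirror-image $B$-component. The construction is symmetric under interchanging $(A,\alpha,\rho_A,\{\cdot,\cdot\}_A)$ with $(B,\beta,\rho_B,\{\cdot,\cdot\}_B)$, so it suffices to prove that the cyclic sum of the $A$-valued component vanishes; the $B$-valued part then follows by the identical argument with \eqref{Lie1} replaced by \eqref{Lie2}.

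To close the $A$-valued part I would sort its terms into three families. The purely internal terms $\{\alpha(x),\{y,z\}_A\}_A$ and their two cyclic images cancel by the Hom-Jacobi identity \eqref{eq_hom_jacobi_identity} of $A$. The terms quadratic in $\rho_B$, together with $\rho_B(\{b,c\}_B)\alpha(x)$, cancel after rewriting $\rho_B(\{b,c\}_B)\alpha(x)$ via the representation axiom \eqref{repLie1} for $(\rho_B,\alpha,A)$ as $\rho_B(\beta(b))\rho_B(c)x-\rho_B(\beta(c))\rho_B(b)x$, so that the cyclic orbits match termwise. The remaining mixed terms, of the shapes $\{\alpha(x),\rho_B(b)z\}_A$, $\rho_B(\beta(a))\{y,z\}_A$, and $\rho_B(\rho_A(y)c)\alpha(x)$, are handled by applying the compatibility condition \eqref{Lie1} to each $\rho_B(\beta(\cdot))\{\cdot,\cdot\}_A$ term and then using the skew-symmetry \eqref{eq_hom_skewsym_identity} of $\{\cdot,\cdot\}_A$ to align the resulting brackets; every monomial then cancels against exactly one partner in the cyclic sum.

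The main obstacle is entirely the bookkeeping: the cyclic sum of the $A$-component alone contains on the order of two dozen monomials, and the content of the theorem is that the compatibility conditions \eqref{Lie1}–\eqref{Lie2} are precisely what is needed to annihilate the terms not already killed by Hom-Jacobi, the representation axiom \eqref{repLie1}, and skew-symmetry. The delicate points are keeping each twisting map $\alpha$ or $\beta$ in its correct argument slot and tracking signs through the repeated use of skew-symmetry; once the monomials are labeled consistently, each family collapses, and assembling the $A$-valued and $B$-valued conclusions gives the Hom-Jacobi identity for $[\cdot,\cdot]$ on $A\oplus B$.
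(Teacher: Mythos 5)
Your proposal is correct, but there is no in-paper proof to measure it against: the paper imports Theorem \ref{matched Lie} from \cite{ShengBai:homLiebialg} and states it without proof, so your direct verification supplies an argument the paper leaves to the literature. I checked that your plan closes exactly as described. With $P=\{y,z\}_A+\rho_B(b)z-\rho_B(c)y$ and $Q=\{b,c\}_B+\rho_A(y)c-\rho_A(z)b$, the $A$-component of $[\alpha(x)+\beta(a),[y+b,z+c]]$ is $\{\alpha(x),P\}_A+\rho_B(\beta(a))P-\rho_B(Q)\alpha(x)$, and the cyclic sum produces twenty-seven $A$-valued monomials: the three iterated brackets vanish by the Hom-Jacobi identity \eqref{eq_hom_jacobi_identity} of $A$; the nine monomials of the forms $\rho_B(\beta(\cdot))\rho_B(\cdot)$ and $\rho_B(\{\cdot,\cdot\}_B)\alpha(\cdot)$ cancel once \eqref{repLie1} is applied to $\rho_B(\{b,c\}_B)\alpha(x)$ and its cyclic images; and the remaining fifteen, after $\rho_B(\beta(a))\{y,z\}_A$, $\rho_B(\beta(b))\{z,x\}_A$, $\rho_B(\beta(c))\{x,y\}_A$ are expanded by \eqref{Lie1}, cancel in twelve pairs, three of which require the skew-symmetry \eqref{eq_hom_skewsym_identity} to align bracket arguments. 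Your reduction of the $B$-component to the same computation with \eqref{Lie2} in place of \eqref{Lie1} is legitimate, since the bracket on $A\oplus B$ is manifestly symmetric under exchanging $(A,\alpha,\rho_A,\{\cdot,\cdot\}_A)$ with $(B,\beta,\rho_B,\{\cdot,\cdot\}_B)$.

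For comparison, the closest proof the paper actually writes out is for the Hom-associative analogue, Theorem \ref{matched ass}: there both association orders are expanded in full and the hypotheses are invoked wholesale, with no symmetry reduction and no organization of the cancellations. Your three-family bookkeeping, plus the role-swap argument that halves the work, is tighter than the paper's own style and is what a complete written-out proof of Theorem \ref{matched Lie} would need.
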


Hom-Poisson algebras were introduced in \cite{MakhSilv:HomDeform}.
\begin{defn}[\cite{MakhSilv:HomDeform}]
A \emph{Hom-Poisson algebra} is a quadruple $(V,\mu, \{\cdot,\cdot\}, \alpha)$ consisting of
a vector space $V$, bilinear maps $\mu: V\times V \rightarrow V$ and
$\{\cdot, \cdot\}: V\times V \rightarrow V$, and a linear map $\alpha: V \rightarrow V$ satisfying,
 \begin{enumerate}
\item $(V,\mu, \alpha)$ is a commutative Hom-associative algebra,
\item $(V, \{\cdot,\cdot\}, \alpha)$ is a Hom-Lie algebra,
\item
for all $x, y, z$ in $V$,
\begin{equation}\label{CompatibiltyPoisson}
\{\alpha (x) , \mu (y,z)\}=\mu (\alpha (y),
\{x,z\})+ \mu (\alpha (z), \{x,y\}).
\end{equation}
\end{enumerate}
\end{defn}
Condition \eqref{CompatibiltyPoisson}, expressing the compatibility between the
multiplication and the Poisson bracket, can be reformulated equivalently as
\begin{equation}\label{CompatibiltyPoissonLeibform}
\{\mu(x,y),\alpha (z) \}=\mu (\{x,z\},\alpha
(y))+\mu (\alpha (x), \{y,z\}).
\end{equation}
\begin{ex}[\cite{MakhSilv:HomDeform}]
Let $\{e_1,e_2,e_3\}$ be a basis of a $3$-dimensional vector space $A$ over $\mathbb{K}$. The following multiplication $\cdot$, skew-symmetric bracket $\{\cdot,\cdot\}$ and linear map $\alpha$ on $V$ define a Hom-Poisson algebra:
\begin{eqnarray*}
& e_1\cdot e_1=e_1, \quad e_1\cdot e_2= e_3, \quad e_2\cdot e_1=e_3\\
& \{e_1,e_2\}=ae_2+be_3, \quad \{e_1,e_3\}=ce_2+de_3, \\
& \alpha(e_1)=\lambda_1 e_2+\lambda_2 e_3, \quad \alpha(e_2)=\lambda_3 e_2+\lambda_4 e_3, \quad	\alpha(e_3)=\lambda_5 e_2+\lambda_6 e_3,
\end{eqnarray*}
where $a,b,c,d,\lambda_1,\lambda_2,\lambda_3,\lambda_4,\lambda_5,\lambda_6$ are parameters in $\mathbb{K}.$
\end{ex}
\begin{prop}
Let $(A,\cdot,\alpha)$ be a commutative Hom-associative algebra, and let $D_1$ and $D_2$
be commuting derivations, $D_1D_2=D_2D_1$. For all $x,y\in A$, let
\begin{equation} %\label{eq:2der}
\{x,y\}=D_1(x)\cdot D_2(y)-D_1(y)\cdot D_2(x).
\end{equation}
Then, $(A,\cdot,\{\cdot,\cdot\},\alpha)$ is a Hom-Poisson algebra.
\end{prop}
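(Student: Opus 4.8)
The plan is to check the three defining conditions of a Hom-Poisson algebra in turn. The first, that $(A,\cdot,\alpha)$ is a commutative Hom-associative algebra, is exactly the hypothesis. The second, that $(A,\{\cdot,\cdot\},\alpha)$ is a Hom-Lie algebra, is precisely the content of the earlier example producing the bracket \eqref{eq:2der} from two commuting derivations: skew-symmetry of $\{\cdot,\cdot\}$ is immediate from its definition, and the Hom-Jacobi identity (where $D_1D_2=D_2D_1$ is used) is supplied by that example. Thus all the substantive work lies in verifying the compatibility identity \eqref{CompatibiltyPoisson}, namely $\{\alpha(x),y\cdot z\}=\alpha(y)\cdot\{x,z\}+\alpha(z)\cdot\{x,y\}$.

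The key preliminary observation I would isolate first is that, in any commutative Hom-associative algebra, the twisted triple product $T(a,b,c):=(a\cdot b)\cdot\alpha(c)$ is totally symmetric in $a,b,c$. Indeed, symmetry in the first two arguments is commutativity \eqref{Homalg:Commutativityidentity}, while the chain $(a\cdot b)\cdot\alpha(c)=\alpha(a)\cdot(b\cdot c)=\alpha(a)\cdot(c\cdot b)=(a\cdot c)\cdot\alpha(b)$, using Hom-associativity \eqref{Homass:homassociator} twice and commutativity once, yields symmetry in the last two; together these transpositions generate the full symmetric group on three letters.

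With this in hand, I would expand both sides of \eqref{CompatibiltyPoisson} using the definition of $\{\cdot,\cdot\}$, the Leibniz rule for $D_1$ and $D_2$, and the relation $\alpha\circ D_i=D_i\circ\alpha$ from \eqref{derivation1}. Each of the resulting eight terms is a triple product in which exactly one factor carries $\alpha$, so Hom-associativity rewrites it as a value of $T$ whose arguments are drawn from $D_1(x),D_2(x),y,z$ and the corresponding derivatives. Explicitly the left side becomes $T(D_1(x),D_2(y),z)+T(D_1(x),y,D_2(z))-T(D_1(y),z,D_2(x))-T(y,D_1(z),D_2(x))$, and the right side reduces to an analogous four-term expression. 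The total symmetry of $T$ then matches the four terms on each side one-to-one, signs included, and the identity follows.

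I expect no genuine obstacle beyond bookkeeping: the only delicate point is keeping track of which variable receives which derivation and maintaining the signs, and the symmetry lemma for $T$ is exactly what collapses the apparent asymmetry between the single bracket on the left and the two brackets on the right. It is worth noting that the compatibility \eqref{CompatibiltyPoisson} itself does not use $D_1D_2=D_2D_1$; that hypothesis enters the proof only through the Hom-Jacobi identity inherited from \eqref{eq:2der}.
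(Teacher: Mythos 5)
There is a genuine gap in your treatment of the Hom-Lie part. You discharge the Hom-Jacobi identity by citing the earlier example containing the bracket \eqref{eq:2der}, but that example is stated in the paper without any proof; the place where the Hom-Jacobi identity for this bracket is actually verified is precisely the proof of the present proposition (it is the longer half of the paper's argument: the eight-term cyclic computation with terms $a_1,\dots,h_1$, in which the hypothesis $D_1D_2=D_2D_1$ is used twice). So, with respect to the paper's logical structure, your citation is circular: if your proof replaced the paper's, the Hom-Jacobi identity for \eqref{eq:2der} would be proved nowhere. To close the gap you must carry out that verification, e.g.\ expand $\circlearrowleft_{x,y,z}\{\alpha(x),\{y,z\}\}$ via the Leibniz rule into eight cyclic sums and cancel them in pairs using commutativity, Hom-associativity, $\alpha\circ D_i=D_i\circ\alpha$ from \eqref{derivation1}, and, for two of the pairings, the commutation $D_1D_2=D_2D_1$. (Skew-symmetry, as you say, is immediate.)

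Your verification of the compatibility identity, by contrast, is correct and is organized differently from the paper's. The paper proves the equivalent form \eqref{CompatibiltyPoissonLeibform}: it expands $\{x,z\}\cdot\alpha(y)+\alpha(x)\cdot\{y,z\}$ into four terms and then pairs them so that, after the same commutativity/Hom-associativity rearrangements you use, the Leibniz rule can be run \emph{backwards} to reassemble $D_1(x\cdot y)\cdot D_2(\alpha(z))$ and $D_1(\alpha(z))\cdot D_2(x\cdot y)$, i.e.\ the expansion of $\{x\cdot y,\alpha(z)\}$. You instead verify \eqref{CompatibiltyPoisson} directly by expanding both sides into values of $T(a,b,c)=(a\cdot b)\cdot\alpha(c)$ and invoking its total symmetry. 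Isolating that symmetry as a lemma is a genuine organizational gain: the paper uses the moves $(a\cdot b)\cdot\alpha(c)=\alpha(a)\cdot(b\cdot c)=(a\cdot c)\cdot\alpha(b)$ repeatedly without naming them, and your signed one-to-one matching of four terms against four terms is easier to check than the paper's regrouping, which requires recognizing when two terms recombine into a derivative of a product. Your closing observation that $D_1D_2=D_2D_1$ plays no role in the compatibility identity is also accurate and consistent with the paper's computation, where that hypothesis appears only in the Hom-Jacobi part.
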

\begin{proof}
First, we prove that $(A,\{\cdot,\cdot\},\alpha)$ is a Hom-Lie algebra. For all $x,y,z\in A$,
\begin{align*}
&\{x,y\}=D_1(x)\cdot D_2(y)-D_1(y)\cdot D_2(x)=-(D_1(y)\cdot D_2(x)-D_1(x)\cdot D_2(y))\cr
&\quad \quad \quad   =-\{y,x\},\cr
  & \circlearrowleft_{x,y,z} \{\alpha(x),\{y,z\}\}=\circlearrowleft_{x,y,z}\Big( D_1(\alpha(x))\cdot D_2(\{y,z\})-D_1(\{y,z\})\cdot D_2(\alpha(x))\Big)\cr
  &\quad = \circlearrowleft_{x,y,z}\Big( D_1(\alpha(x))\cdot D_2(D_1(y)\cdot D_2(z)-D_1(z)\cdot D_2(y))\cr
  &\quad \quad \quad -D_1(D_1(y)\cdot D_2(z)-D_1(z)\cdot D_2(y))\cdot D_2(\alpha(x))\Big)\cr
  &\quad= \circlearrowleft_{x,y,z}\Big( \underbrace{D_1(\alpha(x))\cdot(D_2 D_1(y)\cdot D_{2}(z))}_{a_1(x,y,z)}+ \underbrace{D_1(\alpha(x))\cdot(D_1(y)\cdot D_{2}^{2}(z))}_{b_1(x,y,z)}\cr
  &\quad \quad \quad - \underbrace{D_1(\alpha(x))\cdot(D_2 D_1(z)\cdot D_2(y))}_{c_1(x,y,z)}- \underbrace{D_1(\alpha(x))\cdot(D_1(z)\cdot D_{2}^{2}(y))}_{d_1(x,y,z)}\cr
  &\quad \quad \quad -\underbrace{(D_{1}^{2}(y)\cdot D_2(z))\cdot D_2(\alpha(x))}_{e_1(x,y,z)}-\underbrace{(D_1(y)\cdot D_{1}D_2(z))\cdot D_{2}(\alpha(x))}_{f_1(x,y,z)}\cr
  & \quad \quad \quad +\underbrace{(D_{1}^{2}(z) \cdot D_2(y))\cdot D_{2}(\alpha(x))}_{g_1(x,y,z)}+\underbrace{(D_1(z)\cdot D_1 D_2(y))\cdot D_2(\alpha(x))}_{h_1(x,y,z)}\Big).
   \end{align*}
Using \eqref{Homass:homassociator}-\eqref{derivation1}, observe that
\begin{align*}
  & \circlearrowleft_{x,y,z} (b_1(x,y,z)-d_1(x,y,z))\\
  &\quad\quad=\circlearrowleft_{x,y,z}\Big( \alpha(D_1(x))\cdot(D_1(y)\cdot D_{2}^{2}(z))-\alpha(D_1(x))\cdot(D_1(z)\cdot D_{2}^{2}(y))\Big)\\
    &\quad\quad=\circlearrowleft_{x,y,z}\Big( \alpha(D_1(x))\cdot(D_1(y)\cdot D_{2}^{2}(z))-\alpha(D_1(z))\cdot(D_1(x)\cdot D_{2}^{2}(y))\Big)=0,
\\
   &\circlearrowleft_{x,y,z} (g_1(x,y,z)-e_1(x,y,z))\\&\quad\quad=\circlearrowleft_{x,y,z}\Big( (D_{1}^{2}(z) \cdot D_2(y))\cdot \alpha(D_{2}(x))-(D_{1}^{2}(y)\cdot D_2(z))\cdot \alpha(D_2(x))\Big)\\
    &\quad\quad=\circlearrowleft_{x,y,z}\Big( (D_{1}^{2}(z) \cdot D_2(y))\cdot \alpha(D_{2}(x))-(D_{1}^{2}(y)\cdot D_2(x))\cdot \alpha(D_2(z))\Big)=0,
\\
   &\circlearrowleft_{x,y,z} (a_1(x,y,z)-f_1(x,y,z))\\&\quad\quad=\circlearrowleft_{x,y,z}\Big( \alpha(D_1(x))\cdot(D_2 D_1(y)\cdot D_{2}(z))-(D_1(y)\cdot D_{1}D_2(z))\cdot \alpha(D_{2}(x))\Big)\\
    &\quad\quad=\circlearrowleft_{x,y,z}\Big( \alpha(D_1(x))\cdot(D_1 D_2(y)\cdot D_{2}(z))-\alpha(D_1(y))\cdot(D_1 D_2(z)\cdot D_{2}(x))\Big)\\&\quad\quad=0,
    \quad \mbox{(\hbox{using} $D_1D_2=D_2 D_1$)}\\
   &\circlearrowleft_{x,y,z} (h_1(x,y,z)-c_1(x,y,z))\\&\quad\quad=\circlearrowleft_{x,y,z}\Big( (D_1(z)\cdot D_1 D_2(y))\cdot \alpha(D_2(x))-\alpha(D_1(x))\cdot(D_2 D_1(z)\cdot D_2(y))\Big)\\
    &\quad\quad=\circlearrowleft_{x,y,z}\Big( (D_1(z)\cdot D_1 D_2(y))\cdot \alpha(D_2(x))-(D_1(x)\cdot D_1 D_2(z))\cdot \alpha(D_2(y))\Big)\\&\quad\quad=0. \quad \mbox{(\hbox{using}~$D_1D_2=D_2 D_1$)}
\end{align*}
This shows that $(A,\{\cdot,\cdot\},\alpha)$ satisfies the
Hom-Jacobi identity $\circlearrowleft_{x,y,z} \{\alpha(x),\{y,z\}\}=0$.

Next, for all $x,y,z\in A$,
\begin{align*}
  \{x,z\}\cdot \alpha(y)+\alpha(x)\cdot\{y,z\}&=\underbrace{(D_1(x)\cdot D_2(z))\cdot\alpha(y)}_{i_1(x,y,z)}-\underbrace{(D_1(z)\cdot D_2(x))\cdot\alpha(y)}_{j_1(x,y,z)}\\
  &\quad+\underbrace{\alpha(x)\cdot(D_1(y)\cdot D_2(z))}_{k_1(x,y,z)}-\underbrace{\alpha(x)\cdot(D_1(z)\cdot D_2(y))}_{l_{1}(x,y,z)}.
    \end{align*}
    Using \eqref{Homass:homassociator}-\eqref{derivation1},
    \begin{align*}
      &  i_1(x,y,z)+k_1(x,y,z)\\&\quad\quad=(D_1(x)\cdot D_2(z))\cdot\alpha(y)+\alpha(x)\cdot(D_1(y)\cdot D_2(z))\\
       &\quad\quad=(D_1(x)\cdot y)\cdot \alpha (D_2(z))+(x\cdot D_1(y))\cdot \alpha (D_2(z))\\
       &\quad\quad=D_1(x\cdot y)\cdot\alpha ( D_2(z))\\
       &\quad\quad=D_1(x\cdot y)\cdot D_2(\alpha (z)),
\\
       & j_1(x,y,z)+l_1(x,y,z)\\&\quad\quad=(D_1(z)\cdot D_2(x))\cdot\alpha(y)+\alpha(x)\cdot(D_1(z)\cdot D_2(y))\\
        &\quad\quad=\alpha( D_1(z))\cdot (D_2(x)\cdot y)+\alpha (D_1(z))\cdot(x\cdot D_2(y))\\
        &\quad\quad=\alpha (D_1(z))\cdot D_2(x\cdot y)\\
        &\quad\quad=D_1(\alpha (z))\cdot D_2(x\cdot y).
    \end{align*}
This means that
    \begin{align*}
   \{x,z\}\cdot \alpha(y)+\alpha(x)\cdot\{y,z\}
   &=D_1(x\cdot y)\cdot D_2(\alpha (z))-D_1(\alpha (z))\cdot D_2(x\cdot y)\\
   &=\{x\cdot y,\alpha(z)\}.
   \end{align*}
Therefore, $(A,\cdot,\{\cdot,\cdot\},\alpha)$ satisfies the
Hom-Leibniz identity.
\end{proof}
\begin{defn}
A transposed Hom-Poisson algebra is a quadruple
$(A,\cdot,\{\cdot,\cdot\},\alpha)$, where $(A,\cdot,\alpha)$ is a
commutative Hom-associative algebra and
$(A,\{\cdot,\cdot\},\alpha)$ is a Hom-Lie algebra, satisfying for all $x,y,z\in A$ the
transposed Hom-Leibniz identity,
\begin{equation}\label{leibniz}
2\alpha(z)\cdot\{x,y\}=\{z\cdot x,\alpha(y)\}+\{\alpha(x),z\cdot y\}.
\end{equation}
\end{defn}
 \begin{defn}
Let $(A,\cdot,\{\cdot,\cdot\},\alpha)$ a transposed Hom-Poisson algebra. A subspace $H$ of $A$ is called
\begin{enumerate}
\item
Hom-subalgebra of $(A,\cdot,\{\cdot,\cdot\},\alpha)$ if $\alpha(H)\subseteq H,~H\cdot H \subseteq H ~\textrm{and} ~\{H,H\}\subseteq H.$
\item
Hom-ideal of $(A,\cdot,\{\cdot,\cdot\},\alpha)$ if $\alpha(H)\subseteq H,~A\cdot H\subseteq H~\textrm{and} ~\{A,H\}\subseteq H.$
  \end{enumerate}
\end{defn}
\begin{prop}
Let  $(A_1,\cdot_1,\{\cdot,\cdot\}_1,\alpha_1)$ and $(A_2,\cdot_2,\{\cdot,\cdot\}_2,\alpha_2)$ be two transposed Hom-Poisson algebras.
Define two operations $\cdot$ and $[\cdot,\cdot]$ on $A_1\otimes A_2$ for all $x_1,y_1\in A_1$,  $x_2,y_2\in A_2$ by
\begin{align}
(x_1\otimes x_2)\cdot (y_1\otimes y_2)&=x_1\cdot_1 y_1\otimes x_2\cdot_2
y_2,  \label{eq:tensor1:HomtransposedPoissonalgebras}\\
[x_1\otimes x_2,y_1\otimes y_2] &=\{x_1,y_1\}_1\otimes x_2\cdot_2
y_2+x_1\cdot_1 y_1\otimes \{x_2,y_2\}_2,  \label{eq:tensor2:HomtransposedPoissonalgebras}\\
(\alpha_1\otimes \alpha_2) (x_1\otimes x_2) &=\alpha_1(x_1)\otimes\alpha_2(x_2), \label{eq:tensor3:HomtransposedPoissonalgebras}
\end{align}
Then, $(A_1\otimes A_2,\cdot, [\cdot,\cdot],\alpha_1\otimes\alpha_2)$ is a transposed Hom-Poisson algebra.
\end{prop}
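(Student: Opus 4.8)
The plan is to verify the three defining conditions for the quadruple $(A_1\otimes A_2,\cdot,[\cdot,\cdot],\alpha_1\otimes\alpha_2)$: that $(A_1\otimes A_2,\cdot,\alpha_1\otimes\alpha_2)$ is commutative Hom-associative, that $(A_1\otimes A_2,[\cdot,\cdot],\alpha_1\otimes\alpha_2)$ is a Hom-Lie algebra, and that the transposed Hom-Leibniz identity \eqref{leibniz} holds. The first condition is routine and componentwise: commutativity of $\cdot$ and the algebra-morphism property of $\alpha_1\otimes\alpha_2$ follow at once from \eqref{Homalg:Commutativityidentity} in each factor, while Hom-associativity follows by evaluating the Hom-associator of $\cdot$ on decomposable tensors and invoking \eqref{Homass:homassociator} in $A_1$ and $A_2$ separately. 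Skew-symmetry \eqref{eq_hom_skewsym_identity} of $[\cdot,\cdot]$ is equally direct from \eqref{eq:tensor2:HomtransposedPoissonalgebras}, combining the skew-symmetry of $\{\cdot,\cdot\}_1,\{\cdot,\cdot\}_2$ with the commutativity of $\cdot_1,\cdot_2$. Throughout I will use the auxiliary fact that in any commutative Hom-associative algebra the element $\alpha(a)\cdot(b\cdot c)$ is totally symmetric in $a,b,c$; indeed $\alpha(a)\cdot(b\cdot c)=(a\cdot b)\cdot\alpha(c)=\alpha(c)\cdot(a\cdot b)$ by \eqref{Homass:homassociator} and \eqref{Homalg:Commutativityidentity}.

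The core of the argument is the Hom-Jacobi identity \eqref{eq_hom_jacobi_identity}. Expanding $\circlearrowleft_{x,y,z}[(\alpha_1\otimes\alpha_2)(x_1\otimes x_2),[y_1\otimes y_2,z_1\otimes z_2]]$ via \eqref{eq:tensor2:HomtransposedPoissonalgebras} produces four families of terms according to which tensor slot carries the two brackets: (A) $\{\alpha_1(x_1),\{y_1,z_1\}_1\}_1\otimes\alpha_2(x_2)\cdot_2(y_2\cdot_2 z_2)$, (D) $\alpha_1(x_1)\cdot_1(y_1\cdot_1 z_1)\otimes\{\alpha_2(x_2),\{y_2,z_2\}_2\}_2$, and the mixed families (B) $\alpha_1(x_1)\cdot_1\{y_1,z_1\}_1\otimes\{\alpha_2(x_2),y_2\cdot_2 z_2\}_2$ and (C) $\{\alpha_1(x_1),y_1\cdot_1 z_1\}_1\otimes\alpha_2(x_2)\cdot_2\{y_2,z_2\}_2$. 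In family (A) the second tensor factor is totally symmetric by the auxiliary fact, hence independent of the cyclic permutation and factorable out of the sum, leaving $\big(\circlearrowleft_{x,y,z}\{\alpha_1(x_1),\{y_1,z_1\}_1\}_1\big)\otimes(\cdots)=0$ by the Hom-Jacobi identity in $A_1$; family (D) vanishes symmetrically by the Hom-Jacobi identity in $A_2$.

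It remains to show that the cyclic sum of (B) and (C) cancels, which I expect to be the main obstacle, since these families do not vanish individually and require the transposed identity rather than just the Hom-Lie structure. The plan is to apply \eqref{leibniz} of $A_1$ to the factor $\alpha_1(x_1)\cdot_1\{y_1,z_1\}_1$ in (B) and \eqref{leibniz} of $A_2$ to the factor $\alpha_2(x_2)\cdot_2\{y_2,z_2\}_2$ in (C); each then splits into two ``bracket-of-product'' terms, so that (B) and (C) together contribute four families built from $\{x_1\cdot_1 y_1,\alpha_1(z_1)\}_1$-type and $\{\alpha_1(x_1),y_1\cdot_1 z_1\}_1$-type expressions. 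One then checks, using skew-symmetry \eqref{eq_hom_skewsym_identity} and commutativity \eqref{Homalg:Commutativityidentity}, that each resulting (C)-term is the negative of a cyclic relabeling of a (B)-term: concretely the term $\{\alpha_1(x_1),y_1\cdot_1 z_1\}_1\otimes\{x_2\cdot_2 y_2,\alpha_2(z_2)\}_2$ from (C) equals $-1$ times the $(z,x,y)$-image of the matching (B)-term, and analogously for the second pair. Summed over the three cyclic permutations, all twelve terms cancel in pairs, so $\circlearrowleft_{x,y,z}[(B)+(C)]=0$ and the Hom-Jacobi identity follows.

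Finally, for the transposed Hom-Leibniz identity \eqref{leibniz} on $A_1\otimes A_2$, I would expand the left-hand side $2(\alpha_1\otimes\alpha_2)(z_1\otimes z_2)\cdot[x_1\otimes x_2,y_1\otimes y_2]$ into its two summands and apply \eqref{leibniz} of $A_1$ to $2\alpha_1(z_1)\cdot_1\{x_1,y_1\}_1$ in the first and \eqref{leibniz} of $A_2$ to $2\alpha_2(z_2)\cdot_2\{x_2,y_2\}_2$ in the second. The spectator product factors $\alpha_2(z_2)\cdot_2(x_2\cdot_2 y_2)$ and $\alpha_1(z_1)\cdot_1(x_1\cdot_1 y_1)$ are rewritten, by the auxiliary symmetry fact, as $(z_2\cdot_2 x_2)\cdot_2\alpha_2(y_2)$ and $\alpha_2(x_2)\cdot_2(z_2\cdot_2 y_2)$ (respectively their $A_1$-analogues), which are exactly the spectator factors appearing when one expands the right-hand side $[(z_1\otimes z_2)\cdot(x_1\otimes x_2),(\alpha_1\otimes\alpha_2)(y_1\otimes y_2)]+[(\alpha_1\otimes\alpha_2)(x_1\otimes x_2),(z_1\otimes z_2)\cdot(y_1\otimes y_2)]$. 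A direct four-term comparison then yields equality. This last step is routine once the component identities are in place; the genuine work lies in the Hom-Jacobi cancellation of families (B) and (C).
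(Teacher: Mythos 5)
Your proof is correct, and there is in fact nothing in the paper to compare it against: the authors state this proposition without any proof, so your argument supplies exactly what is omitted. The two nontrivial ingredients both check out. Your auxiliary fact holds because $\alpha(a)\cdot(b\cdot c)=(a\cdot b)\cdot\alpha(c)=\alpha(c)\cdot(a\cdot b)$ gives invariance under the $3$-cycle $(a,b,c)\mapsto(c,a,b)$, while commutativity gives invariance under $b\leftrightarrow c$; together these generate all of $S_3$, which is what legitimizes pulling the spectator factor out of the cyclic sum so that your families (A) and (D) vanish by \eqref{eq_hom_jacobi_identity} in $A_1$, respectively $A_2$ (no multiplicativity of $\alpha_1\otimes\alpha_2$ is needed anywhere, consistent with the paper's definitions). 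For the crux, your cancellation of (B) and (C) is right and can be certified compactly: put $P_1=\{\alpha_1(x_1),y_1\cdot_1 z_1\}_1$, $Q_1=\{\alpha_1(y_1),x_1\cdot_1 z_1\}_1$, $R_1=\{\alpha_1(z_1),x_1\cdot_1 y_1\}_1$, and similarly $P_2,Q_2,R_2$ in $A_2$, so that the cyclic shift $(x,y,z)\mapsto(y,z,x)$ acts by $P_i\mapsto Q_i\mapsto R_i\mapsto P_i$. Using \eqref{leibniz} in $A_1$ together with skew-symmetry and commutativity, family (B) at $(x,y,z)$ becomes $\tfrac12\,(Q_1-R_1)\otimes P_2$, and using \eqref{leibniz} in $A_2$, family (C) becomes $\tfrac12\,P_1\otimes(Q_2-R_2)$; hence the cyclic sum of (B) and (C) equals
\begin{align*}
&\tfrac12\bigl[(Q_1-R_1)\otimes P_2+(R_1-P_1)\otimes Q_2+(P_1-Q_1)\otimes R_2\bigr]\\
&\quad+\tfrac12\bigl[P_1\otimes(Q_2-R_2)+Q_1\otimes(R_2-P_2)+R_1\otimes(P_2-Q_2)\bigr]=0,
\end{align*}
the six positive and six negative monomials cancelling in pairs. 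In particular your sample pairing is exactly the cancellation of $-\tfrac12\,P_1\otimes R_2$, coming from (C) at $(x,y,z)$, against $+\tfrac12\,P_1\otimes R_2$, coming from (B) at $(z,x,y)$. The concluding verification of the transposed Hom-Leibniz identity on $A_1\otimes A_2$ is indeed routine once this machinery is in place: by the $S_3$-symmetry all four spectator products arising on the right-hand side coincide with the two on the left-hand side, and \eqref{leibniz} applied in each tensor factor then matches the bracket factors term by term.
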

\begin{prop}
Let $(A,\cdot,\alpha)$ be a commutative Hom-associative algebra, and let $D$ be a derivation. Define the multiplication, for all $x, y\in A$, by
\begin{equation}
\{x,y\}= x\cdot D(y)-D(x)\cdot y.
\label{eq:twoder}
\end{equation}
Then, $(A,\cdot,\{\cdot,\cdot\},\alpha)$ is a transposed Hom-Poisson algebra.
\end{prop}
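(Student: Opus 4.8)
The plan is to verify the two defining conditions of a transposed Hom-Poisson algebra that are not already assumed, namely that $(A,\{\cdot,\cdot\},\alpha)$ is a Hom-Lie algebra (skew-symmetry and Hom-Jacobi identity) and that the transposed Hom-Leibniz identity \eqref{leibniz} holds; commutativity and Hom-associativity of $(A,\cdot,\alpha)$ are given by hypothesis. Note that the bracket \eqref{eq:twoder} is the specialization of the two-derivation bracket treated above to $D_1=\mathrm{id}$ and $D_2=D$, since $x\cdot D(y)-D(x)\cdot y=\mathrm{id}(x)\cdot D(y)-\mathrm{id}(y)\cdot D(x)$ by commutativity. However, because $\mathrm{id}$ fails the Leibniz rule we cannot quote that result directly; in fact the computation is cleaner, as only the genuine derivation $D$ is ever differentiated.

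The main device I would set up first is the observation that in a commutative Hom-associative algebra the trilinear expression $\langle a,b,c\rangle:=\alpha(a)\cdot(b\cdot c)$ is totally symmetric in $a,b,c$. Indeed, Hom-associativity \eqref{Homass:homassociator} gives $\alpha(a)\cdot(b\cdot c)=(a\cdot b)\cdot\alpha(c)$, and combining this with commutativity \eqref{Homalg:Commutativityidentity} one reshuffles the three factors into any order; this symmetry is what renders all subsequent cancellations transparent. Skew-symmetry of $\{\cdot,\cdot\}$ is then immediate from commutativity. For the Hom-Jacobi identity \eqref{eq_hom_jacobi_identity} I would first record the simplification $D(\{y,z\})=y\cdot D^{2}(z)-D^{2}(y)\cdot z$, in which the two first-order terms $D(y)\cdot D(z)$ produced by the Leibniz rule cancel. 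Substituting this into $\{\alpha(x),\{y,z\}\}=\alpha(x)\cdot D(\{y,z\})-\alpha(D(x))\cdot\{y,z\}$ (using $\alpha D=D\alpha$ from \eqref{derivation1}) expresses the Jacobiator as a sum of four families of triple products, each reduced to the form $\langle\cdot,\cdot,\cdot\rangle$ by one application of Hom-associativity. Taking the cyclic sum $\circlearrowleft_{x,y,z}$ and invoking the total symmetry of $\langle\cdot,\cdot,\cdot\rangle$, the four families collapse into two cancelling pairs, so that $\circlearrowleft_{x,y,z}\{\alpha(x),\{y,z\}\}=0$.

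For the transposed Hom-Leibniz identity I would expand both sides using the Leibniz rule for $D$ together with $\alpha D=D\alpha$, then rewrite every term as a symmetric triple. The left-hand side $2\alpha(z)\cdot\{x,y\}$ becomes $2\langle z,x,D(y)\rangle-2\langle z,D(x),y\rangle$, while the right-hand side $\{z\cdot x,\alpha(y)\}+\{\alpha(x),z\cdot y\}$ produces six triples that, after the symmetry is used to identify equal ones, reduce to exactly $2\langle z,x,D(y)\rangle-2\langle z,D(x),y\rangle$; here the two terms $\langle D(z),x,y\rangle$ cancel against each other. Matching the two sides then completes the verification.

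The only genuine obstacle is the bookkeeping in the cyclic Jacobi sum, where a naive expansion confronts one with eight summands taken over three cyclic positions; the symmetric triple product $\langle a,b,c\rangle$ reduces this to a routine pairing and is the point I would emphasize. I would also note that no multiplicativity of $\alpha$ is required anywhere: the argument uses only commutativity, Hom-associativity, the Leibniz rule for $D$, and the commutation $\alpha D=D\alpha$.
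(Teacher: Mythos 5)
Your proof is correct, and at bottom it is the same direct verification the paper performs (skew-symmetry, Hom-Jacobi, transposed Hom-Leibniz), but your packaging differs in a way worth recording. The paper expands $\{\alpha(x),\{y,z\}\}$ into six labelled summands ($a_2,\dots,f_2$) and the right-hand side of \eqref{leibniz} into six more ($i_2,\dots,n_2$), then cancels pairwise, justifying each pairing by an inline use of Hom-associativity and commutativity (e.g.\ rewriting $\alpha(x)\cdot(D^{2}(y)\cdot z)$ as $\alpha(z)\cdot(x\cdot D^{2}(y))$ before relabelling the cyclic sum). You instead isolate once and for all the total symmetry of $\langle a,b,c\rangle=\alpha(a)\cdot(b\cdot c)$ --- which is indeed valid, since commutativity gives the transposition $\langle a,b,c\rangle=\langle a,c,b\rangle$, Hom-associativity \eqref{Homass:homassociator} plus commutativity give the cycle $\langle a,b,c\rangle=\langle c,a,b\rangle$, and these generate all of $S_3$ --- and you pre-cancel the Leibniz cross-terms to get $D(\{y,z\})=y\cdot D^{2}(z)-D^{2}(y)\cdot z$, so your Jacobiator has four families of terms where the paper has six (it carries $a_2=d_2=\alpha(x)\cdot(D(y)\cdot D(z))$ only to cancel them trivially). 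After that your cancellations coincide exactly with the paper's: the two copies of $\langle D(z),x,y\rangle$ that kill each other in your Leibniz computation are its $l_2-j_2=0$, and your totals $2\langle z,x,D(y)\rangle$ and $2\langle z,D(x),y\rangle$ are its $i_2+m_2$ and $k_2+n_2$. Two smaller remarks: your caution about not quoting the two-derivation result is justified, and the obstruction is even stronger than the one you give --- besides $\mathrm{id}$ failing the Leibniz rule, that proposition concludes a \emph{Hom-Poisson} algebra, i.e.\ identity \eqref{CompatibiltyPoissonLeibform}, not the transposed identity \eqref{leibniz}, so it could not yield the desired conclusion in any case; and your observation that multiplicativity of $\alpha$ is never used agrees with the paper's proof, which invokes only commutativity, Hom-associativity, the Leibniz rule for $D$, and $\alpha D=D\alpha$.
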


\begin{proof}
First, we prove that $(A,\{\cdot,\cdot\},\alpha)$ is a Hom-Lie algebra. For all $x,y,z\in A$,
\begin{align*}
\{x,y\}&=x\cdot D(y)-D(x)\cdot y=-(y\cdot D(x)-D(y).x)=-\{y,x\}, \\
\circlearrowleft_{x,y,z} & \{\alpha(x),\{y,z\}\}=\circlearrowleft_{x,y,z}\{\alpha(x),y\cdot D(z)-D(y)\cdot z\}\\
&=\circlearrowleft_{x,y,z}\Big(\alpha(x)\cdot D(y\cdot D(z)-D(y)\cdot z)-D(\alpha (x))\cdot(y\cdot D(z)-D(y)\cdot z)\Big)\\
&= \circlearrowleft_{x,y,z}\Big(\underbrace{\alpha(x)\cdot(D(y)\cdot D(z))}_{a_2(x,y,z)}+\underbrace{\alpha(x)\cdot(y\cdot D^{2}(z))}_{b_2(x,y,z)}-\underbrace{\alpha(x)\cdot(D^{2}(y)\cdot z)}_{c_2(x,y,z)}\\
&\quad \quad \quad -\underbrace{\alpha(x)\cdot(D(y)\cdot D(z))}_{d_2(x,y,z)}-\underbrace{D(\alpha(x))\cdot(y\cdot D(z))}_{e_2(x,y,z)}+\underbrace{D(\alpha(x))\cdot(D(y)\cdot z)}_{f_2(x,y,z)}\Big).
\end{align*}
Using \eqref{Homass:homassociator}-\eqref{derivation1},
\begin{align*}
  & \circlearrowleft_{x,y,z} (a_2(x,y,z)-d_2(x,y,z))\\&\quad\quad=\circlearrowleft_{x,y,z}\Big( \alpha(x)\cdot(D(y)\cdot D(z))-\alpha(x)\cdot(D(y)\cdot D(z))\Big)=0,
\\
  & \circlearrowleft_{x,y,z} (b_2(x,y,z)-c_2(x,y,z))\\&\quad\quad=\circlearrowleft_{x,y,z}\Big( \alpha(x)\cdot(y\cdot D^{2}(z))-\alpha(x)\cdot(D^{2}(y)\cdot z)\Big)\\
    &\quad\quad=\circlearrowleft_{x,y,z}\Big( \alpha(x)\cdot(y\cdot D^{2}(z))-\alpha(z)\cdot(x\cdot D^{2}(y))\Big)=0,
\\
  & \circlearrowleft_{x,y,z} (f_2(x,y,z)-e_2(x,y,z))\\&\quad\quad=\circlearrowleft_{x,y,z}\Big( \alpha (D(x))\cdot(D(y)\cdot z)-\alpha (D(x))\cdot(y\cdot D(z))\Big)\\
    &\quad\quad=\circlearrowleft_{x,y,z}\Big( \alpha (D(x))\cdot(D(y)\cdot z)-\alpha (D(z))\cdot(D(x)\cdot y)\Big)=0.
\end{align*}
This proves that $(A,\{\cdot,\cdot\},\alpha)$ satisfies the
Hom-Jacobi identity.
Next, for all $x,y,z\in A$,
\begin{align*}
    &\{z\cdot x,\alpha(y)\}+\{\alpha(x),z\cdot y\}\\
    &\quad =(z\cdot x)\cdot  D(\alpha(y))-D(z\cdot x)\cdot \alpha(y)+\alpha(x)\cdot D(z\cdot y)- D(\alpha(x))\cdot (z\cdot y)\\
    &\quad=\underbrace{(z\cdot x)\cdot  D(\alpha(y))}_{i_2(x,y,z)}-\underbrace{(D(z)\cdot x)\cdot\alpha(y)}_{j_2(x,y,z)}-\underbrace{(z\cdot D(x))\cdot\alpha(y)}_{k_2(x,y,z)}\\
    &\quad \quad
    +\underbrace{\alpha(x)\cdot(D(z)\cdot y)}_{l_2(x,y,z)}+\underbrace{\alpha(x)\cdot(z\cdot D(y))}_{m_2(x,y,z)}-\underbrace{D(\alpha (x))\cdot (z\cdot y)}_{n_2(x,y,z)}.
\end{align*}
    Using \eqref{Homass:homassociator}-\eqref{derivation1},
    \begin{align*}
    &   l_2(x,y,z)-j_2(x,y,z)\\&\quad\quad=\alpha(x)\cdot(D(z)\cdot y)-(D(z)\cdot x)\cdot\alpha(y)\\
       &\quad\quad=\alpha(x)\cdot(D(z)\cdot y)-\alpha(x)\cdot(D(z)\cdot y)=0,\\
      &  i_2(x,y,z)+m_2(x,y,z)\\&\quad\quad=(z\cdot x)\cdot  \alpha(D(y))+\alpha(x)\cdot(z\cdot D(y))\\
        &\quad\quad=\alpha(z)\cdot(x\cdot D(y))+\alpha(z)\cdot(x\cdot D(y))=2\alpha(z)\cdot(x\cdot D(y)),
\\
     &   k_2(x,y,z)+n_2(x,y,z)\\&\quad\quad=(z\cdot D(x))\cdot\alpha(y)+\alpha (D(x))\cdot (z\cdot y)\\
        &\quad\quad=\alpha(z)\cdot(D(x)\cdot y)+\alpha(z)\cdot(D(x)\cdot y)=2\alpha(z)\cdot(D(x)\cdot y).
    \end{align*}
This means that
$$ \{z\cdot x,\alpha(y)\}+\{\alpha(x),z\cdot y\}
=2\alpha(z)\cdot(x\cdot D(y))-2\alpha(z)\cdot(D(x)\cdot y)=2\alpha(z)\{x,y\}.
$$
Therefore, $(A,\cdot,\{\cdot,\cdot\},\alpha)$ satisfies the transposed Hom-Leibniz identity.
\end{proof}
\begin{ex}\label{examplehomtransposed} Consider a two dimensional $\mathbb{K}$-linear space $A$ with basis $\{e_1,e_2\}$.
	Consider the commutative Hom-associative  algebra $(A,\cdot,\alpha)$ defined on $ A$ by $$e_1\cdot e_1= -e_1,~
		e_1\cdot e_2= e_2\cdot e_1=e_2,~ \alpha(e_1)=-e_1,~\alpha(e_2)=-e_2,$$ and the derivation $D$ on $A$ defined by $D(e_1)=0,~D(e_2)=\lambda e_2,~~\lambda\in\mathbb{K}.$
Then, $(A,\cdot,\{\cdot,\cdot\},\alpha)$ is a transposed Hom-Poisson algebra, where
for all $(i,j)\neq(1,2)$,
\begin{align}
\{e_1,e_2\}&= e_1\cdot D(e_2)-D(e_1)\cdot e_2=\lambda e_2, \\
\{e_i,e_j\}&=0.
\end{align}
\end{ex}

\begin{lem}[\cite{BaiBaiGuoWu2020:transpPoisNovPois3Lie}]\label{lemma1.1}
Let $(A,\cdot,\{\cdot,\cdot\})$ be a transposed Poisson algebra. Then, for all $x,y,z,t\in A,$
\begin{equation}\label{lemmaidentity2}
\{xz,yt\}+\{xt,yz\}=2zt\{x,y\}.
\end{equation}
\end{lem}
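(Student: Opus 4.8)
The plan is to deduce \eqref{lemmaidentity2} entirely from the transposed Leibniz identity \eqref{trans Poisson}, which in juxtaposition notation reads $2a\{b,c\}=\{ab,c\}+\{b,ac\}$, used in combination with commutativity and associativity of the product $\cdot$. The strategy is to peel the extra factor $t$ off each of the two brackets on the left-hand side, and then to recognise the leftover terms as a single transposed Leibniz expansion.

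First I would apply \eqref{trans Poisson} with multiplier $a=t$ to the brackets $\{xz,y\}$ and $\{x,yz\}$. Using $t(xz)=xzt$ and $t(yz)=yzt$, this yields
\[ 2t\{xz,y\}=\{xzt,y\}+\{xz,yt\}, \qquad 2t\{x,yz\}=\{xt,yz\}+\{x,yzt\}. \]
Solving each for the mixed bracket and summing gives
\[ \{xz,yt\}+\{xt,yz\}=2t\bigl(\{xz,y\}+\{x,yz\}\bigr)-\bigl(\{xzt,y\}+\{x,yzt\}\bigr). \]

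It then remains to simplify the two grouped expressions on the right. Applying \eqref{trans Poisson} with multiplier $a=z$ to $\{x,y\}$ gives $\{xz,y\}+\{x,yz\}=2z\{x,y\}$, so the first grouped term equals $2t\cdot 2z\{x,y\}=4zt\{x,y\}$; applying \eqref{trans Poisson} instead with multiplier $a=zt$ to $\{x,y\}$ gives $\{xzt,y\}+\{x,yzt\}=2zt\{x,y\}$ for the second. Substituting both into the previous display collapses the right-hand side to $4zt\{x,y\}-2zt\{x,y\}=2zt\{x,y\}$, which is \eqref{lemmaidentity2}.

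I do not expect a genuine obstacle here: the argument is just three applications of the transposed Leibniz identity, with the multipliers $t$, $z$ and $zt$ respectively. The only point requiring care is the commutative-associative bookkeeping (for instance $t(xz)=(zt)x=xzt$), and in particular recognising that $\{xzt,y\}+\{x,yzt\}$ is precisely the transposed Leibniz expansion of $2zt\{x,y\}$, which is exactly what makes the two leftover terms cancel against half of the first grouped term.
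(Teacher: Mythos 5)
Your proof is correct and is essentially the paper's own argument with the roles of $z$ and $t$ interchanged: the paper applies the transposed Leibniz identity with multiplier $z$ to the pairs $(x,yt)$ and $(xt,y)$ and then identifies the leftover terms via the expansions $\{x,yt\}+\{xt,y\}=2t\{x,y\}$ and $\{x,ytz\}+\{xtz,y\}=2tz\{x,y\}$, which is exactly your computation after the swap $z\leftrightarrow t$ permitted by the symmetry of \eqref{lemmaidentity2}. There is no gap; the two write-ups differ only in bookkeeping (you solve for the mixed brackets before summing, the paper sums first and then simplifies both sides).
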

\begin{proof}
Using \eqref{trans Poisson},
\begin{align*}
\{xz,yt\}+\{x,ytz\}=2z\{x,yt\},\\
\{xt,yz\}+\{xtz,y\}=2z\{xt,y\}.
\end{align*}
Taking the sum of the above two identities gives
$$ (\{xz,yt\}+\{xt,yz\})+(\{x,ytz\}+\{xtz,y\})= 2z(\{x,yt\}+\{xt,y\}).$$
This means that
\begin{align*}
    \{x,ytz\}+\{xtz,y\}=2tz\{x,y\},\\
    2z(\{x,yt\}+\{xt,y\})=4zt\{x,y\}.
\end{align*}
Therefore,  \eqref{lemmaidentity2} is satisfied.
\end{proof}
\begin{lem}[\cite{BaiBaiGuoWu2020:transpPoisNovPois3Lie}]\label{lemma1.2}
Let $(A, \cdot, \{\cdot,\cdot\})$ be a transposed Poisson algebra. For any $h \in A$, define a
linear map $\alpha_h : A\rightarrow A$ by
\begin{equation}\label{alpha_h}
\alpha_h(x) = h\cdot x, ~~\hbox{for~all}~x \in A.
\end{equation}
Then, $(A, \{\cdot,\cdot\}, \alpha_h)$ is a Hom-Lie algebra.
\end{lem}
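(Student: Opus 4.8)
The plan is to verify the two defining axioms of a Hom-Lie algebra for $(A,\{\cdot,\cdot\},\alpha_h)$. Skew-symmetry of $\{\cdot,\cdot\}$ is inherited verbatim from the Lie algebra $(A,\{\cdot,\cdot\})$ and requires nothing new, so the entire content lies in the Hom-Jacobi identity, which I abbreviate as
\[
S:=\circlearrowleft_{x,y,z}\{\alpha_h(x),\{y,z\}\}=\circlearrowleft_{x,y,z}\{h\cdot x,\{y,z\}\}=0.
\]
I would write the associative product as a dot and use only two inputs: the transposed Leibniz identity \eqref{trans Poisson} with $h$ playing the role of the outer factor, in the form $2\,h\cdot\{u,v\}=\{h\cdot u,v\}+\{u,h\cdot v\}$, and the ordinary Jacobi identity of the bracket, $\circlearrowleft_{x,y,z}\{x,\{y,z\}\}=0$.

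First I would apply \eqref{trans Poisson} with $u=x$ and $v=\{y,z\}$, giving $\{h\cdot x,\{y,z\}\}=2\,h\cdot\{x,\{y,z\}\}-\{x,h\cdot\{y,z\}\}$. Summing cyclically and annihilating the term $\circlearrowleft_{x,y,z}\{x,\{y,z\}\}$ by the Jacobi identity yields the first relation $S=-T$, where $T:=\circlearrowleft_{x,y,z}\{x,h\cdot\{y,z\}\}$.

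The crux is then to evaluate $T$ independently. I would rewrite the inner element again through \eqref{trans Poisson} as $h\cdot\{y,z\}=\tfrac12\bigl(\{h\cdot y,z\}+\{y,h\cdot z\}\bigr)$, turning $T$ into a cyclic sum of six double brackets. Regrouping these six terms according to which variable carries the factor $h$, the two terms in each group combine, via a single application of the Jacobi identity (after the appropriate skew-symmetry flips), into $-\{h\cdot x,\{y,z\}\}$, $-\{h\cdot y,\{z,x\}\}$ and $-\{h\cdot z,\{x,y\}\}$ respectively. This gives the second relation $T=-\tfrac12 S$. Combining $S=-T$ with $T=-\tfrac12 S$ forces $S=\tfrac12 S$, hence $S=0$, which is precisely the Hom-Jacobi identity; together with the inherited skew-symmetry this establishes that $(A,\{\cdot,\cdot\},\alpha_h)$ is a Hom-Lie algebra.

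I expect the main obstacle to be organizational rather than conceptual: it is the sign- and index-bookkeeping in the second step, where the six double brackets must be paired correctly and the Jacobi identity applied with the right orientation so that each pair collapses to a single term of $S$. Pinning down the coefficient $-\tfrac12$ exactly (and hence the clean cancellation $S=\tfrac12 S$) hinges on handling these skew-symmetry sign changes with care; everything else is a direct substitution of \eqref{trans Poisson} and the Jacobi identity.
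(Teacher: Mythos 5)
Your proof is correct, but there is nothing in the paper to compare it against: Lemma \ref{lemma1.2} is stated with the citation \cite{BaiBaiGuoWu2020:transpPoisNovPois3Lie} and no proof is given in this paper, so your argument fills a gap rather than paralleling an existing one. Checking your two steps: with $S=\circlearrowleft_{x,y,z}\{h\cdot x,\{y,z\}\}$ and $T=\circlearrowleft_{x,y,z}\{x,h\cdot\{y,z\}\}$, the identity \eqref{trans Poisson} applied to the pair $(x,\{y,z\})$ and summed cyclically indeed gives $S=-T$, since the term $2h\cdot\circlearrowleft_{x,y,z}\{x,\{y,z\}\}$ dies by the Jacobi identity; substituting $h\cdot\{y,z\}=\tfrac12(\{h\cdot y,z\}+\{y,h\cdot z\})$ into $T$ produces six double brackets whose three groups, e.g. $\{y,\{z,h\cdot x\}\}+\{z,\{h\cdot x,y\}\}=-\{h\cdot x,\{y,z\}\}$, collapse exactly as you claim, giving $T=-\tfrac12 S$ and hence $S=\tfrac12 S$, so $S=0$ because $\operatorname{char}\mathbb{K}=0$. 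One reassurance on the point you flagged as the main obstacle: no skew-symmetry flips are actually needed, because each pair of terms already sits in cyclic Jacobi position $\{b,\{c,a\}\}+\{c,\{a,b\}\}=-\{a,\{b,c\}\}$ with $a$ the variable carrying $h$; the bookkeeping is cleaner than you feared. It is also worth noting that your bootstrap (two independent relations between $S$ and $T$ forcing both to vanish) is the same device the paper uses to prove Lemma \ref{lemma1.1}, where the transposed Leibniz identity is likewise applied in two different ways and the unwanted terms cancelled against each other, whereas the original reference derives the Hom-Jacobi identity from a previously established list of identities; your route is more direct and self-contained.
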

\begin{lem}\label{lemma1.3}
Let $(A, \cdot)$ be a commutative associative algebra and for all $h\in A$
$\alpha_h$ the linear map define in  \eqref{alpha_h}.
Then, $(A, \cdot,\alpha_h)$ is a commutative Hom-associative algebra.
\end{lem}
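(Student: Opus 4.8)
The plan is to verify the two defining conditions of a commutative Hom-associative algebra for the triple $(A,\cdot,\alpha_h)$, namely commutativity of the product \eqref{Homalg:Commutativityidentity} and vanishing of the Hom-associator \eqref{Homass:homassociator}. First I would dispose of commutativity, which is immediate: since $(A,\cdot)$ is commutative by hypothesis, $x\cdot y=y\cdot x$ holds for all $x,y\in A$, and this condition does not involve $\alpha_h$ at all. Linearity of $\alpha_h$ is equally clear, since $\alpha_h(x)=h\cdot x$ is obtained by fixing one argument of the bilinear product $\cdot$, so $(A,\alpha_h)$ is a Hom-module and $(A,\cdot,\alpha_h)$ is a Hom-algebra.

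The substantive step is Hom-associativity. Writing out the associator with respect to $\alpha_h$, I would compute
\begin{align*}
as_A(x,y,z)&=\mu(\mu(x,y),\alpha_h(z))-\mu(\alpha_h(x),\mu(y,z))\\
&=(x\cdot y)\cdot(h\cdot z)-(h\cdot x)\cdot(y\cdot z),
\end{align*}
and then use associativity and commutativity of the underlying algebra $(A,\cdot)$ to rewrite both bracketings as the single unparenthesised monomial $h\cdot x\cdot y\cdot z$. Concretely, $(x\cdot y)\cdot(h\cdot z)=h\cdot x\cdot y\cdot z$ and likewise $(h\cdot x)\cdot(y\cdot z)=h\cdot x\cdot y\cdot z$, so the two products agree and $as_A(x,y,z)=0$ for all $x,y,z\in A$. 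With commutativity and Hom-associativity established, the triple $(A,\cdot,\alpha_h)$ is a commutative Hom-associative algebra.

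I expect no genuine obstacle here: the statement is the associative analogue of Lemma \ref{lemma1.2}, where fixing the same $\alpha_h$ turns a transposed Poisson bracket into a Hom-Lie bracket. The only point that warrants care is that the notion of commutative Hom-associative algebra used in this paper requires only commutativity and Hom-associativity, and does \emph{not} demand multiplicativity of the twisting map; indeed $\alpha_h$ is generally not multiplicative, since $\alpha_h(x\cdot y)=h\cdot x\cdot y$ whereas $\alpha_h(x)\cdot\alpha_h(y)=h\cdot h\cdot x\cdot y$, and these differ in general. Hence there is nothing further to check beyond the two conditions above.
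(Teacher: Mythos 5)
Your proof is correct and follows essentially the same route as the paper's: both arguments reduce the two Hom-associator terms $(x\cdot y)\cdot\alpha_h(z)$ and $\alpha_h(x)\cdot(y\cdot z)$ to the common monomial $h\cdot x\cdot y\cdot z$ using associativity and commutativity of $(A,\cdot)$, with commutativity of the twisted product being immediate. Your additional remark that $\alpha_h$ is generally not multiplicative is accurate and harmless, since the paper's definition of a commutative Hom-associative algebra does not require multiplicativity of the twisting map.
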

\begin{proof}
For all $x,y,z\in A$,
$$(x\cdot y)\cdot\alpha_{h}(z)=(x\cdot y)\cdot h\cdot z=h\cdot(x\cdot y)\cdot  z=h\cdot x\cdot( y\cdot  z)=\alpha_h(x)\cdot(y\cdot z).$$
This means that  $(A, \cdot,\alpha_h)$ is a commutative Hom-associative algebra.
\end{proof}
\begin{thm}\label{theorem transposed to homtransposed}
Let $(A, \cdot, \{\cdot,\cdot\})$ be a transposed Poisson algebra. For each $h\in A$, let
$\alpha_h$ be the linear map defined in  \eqref{alpha_h}.
Then, $(A, \cdot,\{\cdot,\cdot\}, \alpha_h)$ is a transposed Hom-Poisson algebra.
\end{thm}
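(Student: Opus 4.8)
The plan is to verify the three defining conditions of a transposed Hom-Poisson algebra for the quadruple $(A,\cdot,\{\cdot,\cdot\},\alpha_h)$, two of which are already supplied by the preceding lemmas. First, by Lemma \ref{lemma1.3}, the triple $(A,\cdot,\alpha_h)$ is a commutative Hom-associative algebra, so the commutative Hom-associative part of the definition holds. Second, by Lemma \ref{lemma1.2}, the triple $(A,\{\cdot,\cdot\},\alpha_h)$ is a Hom-Lie algebra, so the Hom-Lie part holds as well. Consequently, the entire verification reduces to checking the transposed Hom-Leibniz identity \eqref{leibniz} for the particular twisting map $\alpha=\alpha_h$.

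Writing out \eqref{leibniz} with $\alpha_h(x)=h\cdot x$, the identity to be established reads
$$2(h\cdot z)\cdot\{x,y\}=\{z\cdot x,\,h\cdot y\}+\{h\cdot x,\,z\cdot y\}$$
for all $x,y,z\in A$. The key observation is that this is exactly Lemma \ref{lemma1.1}, i.e.\ identity \eqref{lemmaidentity2}, specialized to the fourth argument $t=h$. Indeed, substituting $t=h$ into $\{xz,yt\}+\{xt,yz\}=2zt\{x,y\}$ gives $\{xz,yh\}+\{xh,yz\}=2zh\{x,y\}$, and reordering the factors inside each bracket by commutativity of the associative product $\cdot$ turns this into the displayed equation.

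Thus I would finish by invoking \eqref{lemmaidentity2} with $t=h$: the right-hand side $\{z\cdot x,h\cdot y\}+\{h\cdot x,z\cdot y\}$ equals $2(z\cdot h)\{x,y\}=2\alpha_h(z)\cdot\{x,y\}$, which is precisely the left-hand side of \eqref{leibniz}. There is essentially no obstacle here, since the substantive content has been packaged into Lemmas \ref{lemma1.1}, \ref{lemma1.2}, and \ref{lemma1.3}; the only point demanding care is the bookkeeping that matches the two brackets of \eqref{lemmaidentity2} with those of \eqref{leibniz} via commutativity and confirms that the coefficient $2$ and the factor $z\cdot h=h\cdot z=\alpha_h(z)$ emerge correctly.
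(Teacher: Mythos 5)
Your proposal is correct and follows essentially the same route as the paper: both reduce the statement to Lemmas \ref{lemma1.2} and \ref{lemma1.3} for the Hom-Lie and commutative Hom-associative parts, and then verify the transposed Hom-Leibniz identity \eqref{leibniz} by specializing identity \eqref{lemmaidentity2} of Lemma \ref{lemma1.1} to $t=h$ and using commutativity of $\cdot$. No further comment is needed.
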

\begin{proof}
Let $(A,\cdot,\{\cdot,\cdot\})$ be a transposed Poisson algebra. By Lemma \ref{lemma1.2} and Lemma \ref{lemma1.3},
$(A, \{\cdot,\cdot\},\alpha_h)$ is a Hom-Lie algebra and $(A,\cdot,\alpha_h)$ is a
commutative Hom-associative algebra respectively.
Now, using Lemma \ref{lemma1.1}, we show that the transposed Hom-Leibniz identity is satisfied:
\begin{align*}
 \{z\cdot x,\alpha_h(y)\}+\{\alpha_h(x),z\cdot y\}
   &= \{z\cdot x,h\cdot y\}+\{h\cdot x,z\cdot y\}\\
   &=2z\cdot h\{x,y\} \quad\quad \mbox{(by \eqref{lemmaidentity2})}\\
   &=2\alpha_h(z)\{x,y\}
\qedhere \end{align*}
\end{proof}
\begin{ex}
Consider a two-dimensional $\mathbb{K}$-linear space $A$ with basis $\{e_1,e_2\}$, and
	consider the transposed Poisson algebra $(A,\cdot,\{\cdot,\cdot\})$ defined on $ A$ by
$$e_1\cdot e_2=e_2\cdot e_1= e_1,~e_2\cdot e_2=e_2,~\{e_1,e_2\}=e_2.$$
Let $\alpha_{e_1}:A\rightarrow A$ be the  linear map defined by $\alpha_{e_1}(e_1)=0, \alpha_{e_1}(e_2)=e_1.$
Then, by Theorem \ref{theorem transposed to homtransposed}, $(A,\cdot,\{\cdot,\cdot\},\alpha_{e_1})$ is a transposed Hom-Poisson algebra.
\end{ex}
\begin{lem}
Let $(A,\cdot,\{\cdot,\cdot\},\alpha)$ be a transposed Hom-Poisson algebra.
Then, for all $x,y,z\in A$,
\begin{equation}\label{lemmaidentity}\alpha(x)\cdot\{y,z\}+\alpha(y)\cdot\{z,x\}+\alpha(z)\cdot\{x,y\}=0.\end{equation}
\end{lem}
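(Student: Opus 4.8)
The plan is to prove that $2S=0$, where
$$S=\alpha(x)\cdot\{y,z\}+\alpha(y)\cdot\{z,x\}+\alpha(z)\cdot\{x,y\}$$
denotes the cyclic expression on the left-hand side of \eqref{lemmaidentity}; since $\mathbb{K}$ has characteristic $0$, this immediately yields $S=0$. The only inputs are the defining axioms of a transposed Hom-Poisson algebra: the transposed Hom-Leibniz identity \eqref{leibniz}, the commutativity of $\cdot$, and the skew-symmetry \eqref{eq_hom_skewsym_identity} of the bracket.

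First I would apply the transposed Hom-Leibniz identity \eqref{leibniz} to each of the three summands of $2S$ separately. Reading \eqref{leibniz} as the rule that $2\alpha(c)\cdot\{a,b\}$ expands to $\{c\cdot a,\alpha(b)\}+\{\alpha(a),c\cdot b\}$, the three cyclic substitutions $(a,b,c)=(y,z,x),(z,x,y),(x,y,z)$ produce six bracket terms:
\begin{align*}
2S&=\{x\cdot y,\alpha(z)\}+\{\alpha(y),x\cdot z\}\\
&\quad+\{y\cdot z,\alpha(x)\}+\{\alpha(z),y\cdot x\}\\
&\quad+\{z\cdot x,\alpha(y)\}+\{\alpha(x),z\cdot y\}.
\end{align*}

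The decisive step is then to pair off these six terms. Using commutativity $x\cdot y=y\cdot x$ to identify the arguments and then skew-symmetry to reverse the two slots of each bracket, one checks that $\{\alpha(z),y\cdot x\}=-\{x\cdot y,\alpha(z)\}$ cancels the first term, $\{z\cdot x,\alpha(y)\}=-\{\alpha(y),x\cdot z\}$ cancels the second, and $\{\alpha(x),z\cdot y\}=-\{y\cdot z,\alpha(x)\}$ cancels the third. Hence $2S=0$, and the identity \eqref{lemmaidentity} follows.

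I do not anticipate any genuine obstacle: the statement is a formal consequence of the axioms, and the entire content is the bookkeeping of matching each expanded bracket with its skew-symmetric partner. The only point requiring care is tracking the variable permutation in each application of \eqref{leibniz} so that commutativity lines up the products correctly before skew-symmetry is invoked. Notably, the twisting map $\alpha$ plays no active role beyond appearing uniformly in every term, so the argument runs exactly parallel to the corresponding identity for ordinary transposed Poisson algebras.
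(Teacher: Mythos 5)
Your proof is correct and is essentially identical to the paper's own argument: both sum the three cyclic instances of the transposed Hom-Leibniz identity \eqref{leibniz} and cancel the resulting six bracket terms pairwise via commutativity of $\cdot$ and skew-symmetry of $\{\cdot,\cdot\}$, obtaining $2S=0$ and hence $S=0$ in characteristic $0$. No gaps; the bookkeeping of the cyclic substitutions checks out exactly.
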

\begin{proof}
Let $x,y,z\in A$. By \eqref{leibniz},
\begin{align*}
    \{x\cdot z,\alpha(y)\}+\{\alpha(x),y\cdot z\}=2 \alpha(z)\cdot\{x,y\},\\
      \{y\cdot x,\alpha(z)\}+\{\alpha(y),z\cdot x\}=2 \alpha(x)\cdot\{y,z\},\\
        \{z\cdot y,\alpha(x)\}+\{\alpha(z),x\cdot y\}=2 \alpha(y)\cdot\{z,x\}.
\end{align*}
Taking the sum of the three identities above gives \eqref{lemmaidentity}.
\end{proof}
\begin{prop}\label{prophompoisson}
For a linear space $A$ with a commutative Hom-associative multiplication $\cdot$ and
a Hom-Lie bracket $\{\cdot,\cdot\}$.
\begin{enumerate}
\item \label{prophompoisson:i}
If either $\cdot$ or $\{\cdot,\cdot\}$ is zero, then $(A, \cdot, \{\cdot,\cdot\},\alpha)$ is a transposed Hom-Poisson algebra, as well as
a Hom-Poisson algebra.
\item \label{prophompoisson:ii}
$(A, \cdot, \{\cdot,\cdot\},\alpha)$ is both a Hom-Poisson algebra and a transposed Hom-Poisson algebra if and
only if
$\alpha(x)\cdot\{y,z\}=\{x\cdot y,\alpha(z)\}=0$
for all $x,y,z\in A.$
\end{enumerate}
\end{prop}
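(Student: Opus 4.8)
The plan is to handle the two parts separately, since part \ref{prophompoisson:i} and the ``if'' half of part \ref{prophompoisson:ii} are routine verifications and the real content is the ``only if'' half of part \ref{prophompoisson:ii}. For part \ref{prophompoisson:i}, observe that both the Hom-Poisson compatibility \eqref{CompatibiltyPoisson} and the transposed Hom-Leibniz identity \eqref{leibniz} are built from monomials each containing exactly one factor of $\cdot$ and one factor of $\{\cdot,\cdot\}$. Hence if either product vanishes identically, every term on both sides of each identity is zero, so both compatibility conditions hold; combined with the standing hypotheses that $(A,\cdot,\alpha)$ is commutative Hom-associative and $(A,\{\cdot,\cdot\},\alpha)$ is Hom-Lie, this makes $(A,\cdot,\{\cdot,\cdot\},\alpha)$ simultaneously a Hom-Poisson and a transposed Hom-Poisson algebra.

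For the ``if'' direction of part \ref{prophompoisson:ii}, I would assume $\alpha(x)\cdot\{y,z\}=\{x\cdot y,\alpha(z)\}=0$ for all $x,y,z$ and check the two compatibilities directly. In \eqref{CompatibiltyPoissonLeibform} the left side vanishes by the second hypothesis, while each right-hand term is of the shape $\alpha(\cdot)\cdot\{\cdot,\cdot\}$ after using commutativity \eqref{Homalg:Commutativityidentity}, hence vanishes by the first hypothesis. Similarly, after rewriting $\{\alpha(x),z\cdot y\}=-\{z\cdot y,\alpha(x)\}$ via skew-symmetry \eqref{eq_hom_skewsym_identity}, every term of \eqref{leibniz} is of the form $\alpha(\cdot)\cdot\{\cdot,\cdot\}$ or $\{(\cdot)\cdot(\cdot),\alpha(\cdot)\}$ and so vanishes as well. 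Thus both identities hold.

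The substantive step is the ``only if'' direction. Assuming both compatibilities, I would first rewrite \eqref{leibniz} using \eqref{eq_hom_skewsym_identity} as $\{z\cdot x,\alpha(y)\}-\{z\cdot y,\alpha(x)\}=2\alpha(z)\cdot\{x,y\}$, and then expand each bracket on the left using the Hom-Poisson Leibniz identity \eqref{CompatibiltyPoissonLeibform}. The two copies of $\alpha(z)\cdot\{x,y\}$ produced (again after skew-symmetry) assemble into $2\alpha(z)\cdot\{x,y\}$, which cancels the right-hand side, leaving the relation
\begin{equation*}
\{z,y\}\cdot\alpha(x)=\{z,x\}\cdot\alpha(y).
\end{equation*}
Writing $f(u,v,w):=\alpha(u)\cdot\{v,w\}$ and invoking commutativity, this says $f$ is symmetric under interchange of its first and third arguments, while \eqref{eq_hom_skewsym_identity} makes $f$ skew in its last two arguments. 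I would then feed these two symmetries into the cyclic identity \eqref{lemmaidentity} (available since the algebra is transposed Hom-Poisson): rewriting $f(y,z,x)$ and $f(z,x,y)$ in terms of $f(x,y,z)$ collapses the cyclic sum to $2f(x,y,z)=0$, whence $\alpha(x)\cdot\{y,z\}=0$ because $\mathbb{K}$ has characteristic $0$. Substituting this back into \eqref{CompatibiltyPoissonLeibform}, whose right side is then a sum of two such vanishing terms, yields $\{x\cdot y,\alpha(z)\}=0$, which finishes the argument. The only delicate point is the sign bookkeeping in the collapse: the $(1\,3)$-symmetry and the $(2\,3)$-skewness are two transpositions that together generate all of $S_3$, and it is exactly the impossibility of consistently assigning them opposite signs that forces $f$ to vanish, so one must track these permutations carefully rather than guess.
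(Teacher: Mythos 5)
Your proposal is correct and follows essentially the same route as the paper's proof: both expand the transposed Hom-Leibniz identity \eqref{leibniz} through the Hom-Poisson compatibility \eqref{CompatibiltyPoisson}--\eqref{CompatibiltyPoissonLeibform} to obtain the symmetry relation $\{z,y\}\cdot\alpha(x)=\{z,x\}\cdot\alpha(y)$, then combine it with the cyclic identity \eqref{lemmaidentity} to force $\alpha(x)\cdot\{y,z\}=0$, and finally reapply the Hom-Poisson identity to get $\{x\cdot y,\alpha(z)\}=0$. Your $S_3$-symmetry framing of the final collapse is just a more conceptual gloss on the paper's direct substitution (and the exact coefficient produced is $f$, $-f$, or $3f$ rather than $2f$ depending on which rewriting route one takes, though this is immaterial since any of them, like your sign-inconsistency argument, yields $f=0$ in characteristic $0$).
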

\begin{proof} The statement \ref{prophompoisson:i} holds, as all axioms are trivially satisfied.
Let us prove \ref{prophompoisson:ii}. It is easy to check that if $\alpha(x)\cdot\{y,z\}=\{x\cdot y,\alpha(z)\}=0$, for all $x,y\in A$, then $(A, \cdot, \{\cdot,\cdot\},\alpha)$ is both a Hom-Poisson algebra and a transposed Hom-Poisson algebra. Conversely, let $x,y,z\in A$. By \eqref{CompatibiltyPoisson},
$$\{z\cdot x,\alpha(y)\}=-\alpha(x)\cdot\{y,z\}-\alpha(z)\cdot\{y,x\},~~\{\alpha(x),z\cdot y\}=\alpha(y)\cdot\{x,z\}+\alpha(z)\cdot\{x,y\}.$$
Next, by \eqref{leibniz},
$$0=\{z\cdot x,\alpha(y)\}+\{\alpha(x),z\cdot y\}-2\alpha(z)\cdot\{x,y\}=\{x,z\}\cdot\alpha(y)-\{y,z\}\cdot\alpha(x).$$
Then, by \eqref{lemmaidentity},
$\{x,y\}\cdot\alpha(z)=\{x,z\}\cdot\alpha(y)-\{y,z\}\cdot\alpha(x)=0.$
By \eqref{CompatibiltyPoisson} again,  $\{x\cdot y,\alpha(z)\}=0.$
\end{proof}
\begin{ex}
By Proposition \ref{prophompoisson}, the transposed Hom-Poisson algebra $(A,\cdot,\{\cdot,\cdot\},\alpha)$ in Example \ref{examplehomtransposed} is Hom-Poisson algebra if and only if $\lambda=0$.
\end{ex}
\begin{rmk}
The above Proposition means that, with the relations of the commutative
Hom-associative product and the Hom-Lie bracket, the intersection of the Hom-Leibniz identity and transposed Hom-Leibniz identity is trivial. That is, the structures
of transposed Hom-Poisson algebras and Hom-Poisson algebras are incompatible.
\end{rmk}
%In the setting of Corollary 3.4, two natural questions arise: Is %the triple
%$$A'_{\alpha}=(A,\{\cdot,\cdot\}_{\alpha} %=\alpha\circ\{\cdot,\cdot\},\mu _{\alpha} = \alpha\circ \mu)$$
%a transposed Poisson algebra? If so, is it isomorphic to $A$ %itself? To study these questions, we introduce the following %concepts.
\begin{defn} Let $(A, \cdot, \{\cdot,\cdot\} , \alpha),$ and $(A', \cdot', \{\cdot,\cdot\}', \alpha')$ be transposed Hom-Poisson algebras.
A morphism $f:(A, \cdot, \{\cdot,\cdot\} , \alpha)\rightarrow (A', \cdot', \{\cdot,\cdot\}', \alpha')$ is a linear map $f:A\rightarrow A'$ satisfying for all $x, y\in A$,
\begin{align*}
f(x\cdot y)=f(x)\cdot ' f(y),\quad
f(\{x,y\})=\{f(x),f(y)\}',\quad
f\circ \alpha =\alpha'\circ f.
\end{align*}
\end{defn}
The following theorem provides a procedure for construction of the transposed Hom-Poisson algebras
from transposed Poisson algebras and morphisms.
\begin{thm}\label{twist-transposed}
Let $\mathcal{A}=(A, \cdot ,\{\cdot,\cdot\})$ be a transposed Poisson algebra and
$\alpha :\mathcal{A}\rightarrow \mathcal{A}$ be a transposed Poisson algebras
morphism. Define $\cdot_{\alpha}, \ \{\cdot,\cdot\}_\alpha:A \times A\rightarrow A$ for all $x, y\in A$, by
$x\cdot _{\alpha}y =\alpha (x\cdot y)$ and $\{x,y\}_{\alpha} =\alpha(\{x, y\})$.
Then, $\mathcal{A}_\alpha=(A_{\alpha}=A, \cdot _{\alpha},\{\cdot,\cdot\}_\alpha, \alpha)$ is a transposed Hom-Poisson algebra called the $\alpha$-twist or Yau twist of $(A, \cdot ,\{\cdot,\cdot\})$.
Moreover, assume that $\mathcal{A}'=(A', \cdot',\{\cdot,\cdot\}')$ is another transposed Poisson algebra and $\alpha':\mathcal{A}'\rightarrow \mathcal{A}'$ is a transposed Poisson algebras morphism. Let $f:\mathcal{A}\rightarrow \mathcal{A}'$ be a
transposed Poisson algebras morphism satisfying $f\circ \alpha =\alpha
'\circ f$. Then, $f:\mathcal{A}_{\alpha }\rightarrow \mathcal{A}'_{\alpha}$ is a
transposed Hom-Poisson algebras morphism.
\end{thm}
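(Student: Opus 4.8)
The plan is to verify in turn the three defining ingredients of a transposed Hom-Poisson algebra for $\mathcal{A}_\alpha$, and then the functoriality statement. The entire argument rests on one observation: since $\alpha$ is a transposed Poisson algebra morphism, $\alpha(x)\cdot\alpha(y)=\alpha(x\cdot y)$ and $\{\alpha(x),\alpha(y)\}=\alpha(\{x,y\})$ for all $x,y\in A$. Consequently, every expression built from $\cdot_\alpha$ and $\{\cdot,\cdot\}_\alpha$ collapses, after pushing the outer $\alpha$'s through, to $\alpha^2$ applied to the corresponding expression in the untwisted operations $\cdot$ and $\{\cdot,\cdot\}$. Each twisted axiom then reduces to $\alpha^2$ applied to the untwisted one, which holds in $\mathcal{A}$.

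First I would show that $(A,\cdot_\alpha,\alpha)$ is a commutative Hom-associative algebra. Commutativity is immediate from commutativity of $\cdot$, and multiplicativity follows from $\alpha(x\cdot_\alpha y)=\alpha^2(x\cdot y)=\alpha(x)\cdot_\alpha\alpha(y)$. For Hom-associativity one computes $(x\cdot_\alpha y)\cdot_\alpha\alpha(z)=\alpha^2((x\cdot y)\cdot z)$ and $\alpha(x)\cdot_\alpha(y\cdot_\alpha z)=\alpha^2(x\cdot(y\cdot z))$, which agree by associativity of $\cdot$, giving \eqref{Homass:homassociator}. In the same way $(A,\{\cdot,\cdot\}_\alpha,\alpha)$ is a Hom-Lie algebra: skew-symmetry \eqref{eq_hom_skewsym_identity} is inherited from that of $\{\cdot,\cdot\}$, and each cyclic term of the Hom-Jacobi identity \eqref{eq_hom_jacobi_identity} equals $\alpha^2$ of the corresponding term $\{x,\{y,z\}\}$ of the ordinary Jacobi identity, so the cyclic sum vanishes.

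The heart of the proof is the transposed Hom-Leibniz identity \eqref{leibniz}. Expanding the left-hand side gives $2\,\alpha(z)\cdot_\alpha\{x,y\}_\alpha=2\,\alpha^2(z\cdot\{x,y\})$, while the two right-hand terms become $\{z\cdot_\alpha x,\alpha(y)\}_\alpha=\alpha^2(\{z\cdot x,y\})$ and $\{\alpha(x),z\cdot_\alpha y\}_\alpha=\alpha^2(\{x,z\cdot y\})$. Thus \eqref{leibniz} for $\mathcal{A}_\alpha$ is precisely $\alpha^2$ applied to the original transposed Leibniz identity \eqref{trans Poisson}, namely $2\,z\cdot\{x,y\}=\{z\cdot x,y\}+\{x,z\cdot y\}$, and therefore holds. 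This is the only step in which the transposed Leibniz identity of $\mathcal{A}$ is used rather than merely commutativity, associativity, or the Jacobi identity; it is also where one must track most carefully which argument carries the twist, so I expect it to be the main (though still routine) obstacle.

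Finally, for the functoriality statement, I would verify directly that $f$ respects the twisted operations. Using $f(x\cdot y)=f(x)\cdot' f(y)$, $f(\{x,y\})=\{f(x),f(y)\}'$, and the intertwining $f\circ\alpha=\alpha'\circ f$, one gets $f(x\cdot_\alpha y)=f(\alpha(x\cdot y))=\alpha'(f(x)\cdot' f(y))=f(x)\cdot'_{\alpha'}f(y)$, and similarly $f(\{x,y\}_\alpha)=\{f(x),f(y)\}'_{\alpha'}$; the commutation with the twisting maps is assumed. Hence $f:\mathcal{A}_\alpha\to\mathcal{A}'_{\alpha'}$ is a morphism of transposed Hom-Poisson algebras, completing the proof.
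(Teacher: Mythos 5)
Your proof is correct and follows precisely the method the paper itself uses: the paper states Theorem \ref{twist-transposed} without proof, but its proof of the analogous Yau-twist result for pre-Lie Poisson algebras (Theorem \ref{Yaupre-poiss}) proceeds by exactly your reduction --- use the morphism property of $\alpha$ to push the outer twists through, so that each twisted axiom (commutativity, Hom-associativity, skew-symmetry, Hom-Jacobi, and the transposed Hom-Leibniz identity \eqref{leibniz}) becomes $\alpha^{2}$ applied to the corresponding untwisted axiom of $\mathcal{A}$, and verify that $f$ respects the twisted operations by the same one-line computation $f(x\cdot_{\alpha}y)=f(\alpha(x\cdot y))=\alpha'(f(x)\cdot' f(y))=f(x)\cdot'_{\alpha'}f(y)$. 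Nothing further is needed.
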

\begin{defn}
\label{def:deform}
Let $\mathcal{A}=(A,\cdot,\{\cdot,\cdot\})$ be a transposed Poisson algebra and $\alpha: \mathcal{A} \to \mathcal{A}$ be a transposed Poisson algebras morphism.
\begin{enumerate}
\item
$\mathcal{A}_{\alpha}=(A_{\alpha}=A,\cdot _{\alpha} = \alpha\circ \cdot,\{\cdot,\cdot\}_{\alpha} =\alpha\circ\{\cdot,\cdot\})$ is called the $\alpha$-twisting of $\mathcal{A}$.
\item
The $\alpha$-twisting $\mathcal{A}'_{\alpha}$ of $\mathcal{A}$ is called trivial if
$
\cdot_{\alpha}= 0 =\{\cdot,\cdot\}_{\alpha}$, and non-trivial if either  $\cdot_{\alpha} \neq 0$ or $\{\cdot,\cdot\}_{\alpha} \neq 0$.
\item
$\mathcal{A}$ is called rigid if every twisting of $\mathcal{A}$ is either trivial or isomorphic to $\mathcal{A}$.
\end{enumerate}
\end{defn}

%%%%%%%%%%%%%%%%%%%%%%%%%
\begin{prop}
\label{prop:nonrigidity}
Let $\mathcal{A}=(A,\cdot,\{\cdot,\cdot\})$ be a transposed Poisson algebra.  Suppose there exist a morphism $\alpha \colon \mathcal{A} \to \mathcal{A}$ such that either
$\cdot_{\alpha} = \alpha\circ\cdot$ is not associative, or
$\{\cdot,\cdot\}_{\alpha}=\alpha\circ\{\cdot,\cdot\}$ does not satisfy the Jacobi identity.
Then, $\mathcal{A}$ is not rigid.
\end{prop}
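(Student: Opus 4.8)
The plan is to argue by contradiction, exploiting the fact that associativity and the Jacobi identity are invariants under any bijection that intertwines the two bilinear operations. Assume, contrary to the conclusion, that $\mathcal{A}$ is rigid, and fix a morphism $\alpha\colon\mathcal{A}\to\mathcal{A}$ as in the hypothesis, so that $\mathcal{A}_\alpha=(A,\cdot_\alpha,\{\cdot,\cdot\}_\alpha)$ is its $\alpha$-twisting, with $\cdot_\alpha=\alpha\circ\cdot$ failing associativity or $\{\cdot,\cdot\}_\alpha=\alpha\circ\{\cdot,\cdot\}$ failing the Jacobi identity. By the definition of rigidity, $\mathcal{A}_\alpha$ is then either trivial or isomorphic to $\mathcal{A}$; I will rule out both alternatives.

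First I would dismiss triviality. If $\cdot_\alpha$ is not associative, then in particular $\cdot_\alpha\neq 0$, since the identically zero product is (trivially) associative; likewise, if $\{\cdot,\cdot\}_\alpha$ does not satisfy the Jacobi identity, then $\{\cdot,\cdot\}_\alpha\neq 0$, since the zero bracket satisfies it vacuously. In either case at least one of the twisted operations is nonzero, so by Definition \ref{def:deform} the twisting $\mathcal{A}_\alpha$ is non-trivial.

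Next I would rule out $\mathcal{A}_\alpha\cong\mathcal{A}$. The point is that an isomorphism is a linear bijection $\varphi$ with $\varphi(x\cdot_\alpha y)=\varphi(x)\cdot\varphi(y)$ and $\varphi(\{x,y\}_\alpha)=\{\varphi(x),\varphi(y)\}$, and such a map transports each algebraic identity from one side to the other. Since $(A,\cdot)$ is associative and $(A,\{\cdot,\cdot\})$ is a Lie algebra, any structure isomorphic to $\mathcal{A}$ must itself carry an associative product and a bracket obeying the Jacobi identity. But by hypothesis $\mathcal{A}_\alpha$ lacks at least one of these two properties, so it cannot be isomorphic to $\mathcal{A}$.

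Combining the two previous steps, $\mathcal{A}_\alpha$ is neither trivial nor isomorphic to $\mathcal{A}$, which contradicts the assumed rigidity of $\mathcal{A}$; hence $\mathcal{A}$ is not rigid. The only point requiring a little care is the invariance argument in the third step: one must observe that associativity and the Jacobi identity, being universally quantified polynomial identities in the operations, are preserved under any operation-preserving bijection, so the \emph{failure} of such an identity is preserved as well. This is where the contradiction is genuinely produced, and it is the crux of the argument rather than any computation.
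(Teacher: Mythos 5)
Your proof is correct and follows essentially the same route as the paper's own argument: non-triviality is ruled out because the zero operations trivially satisfy associativity and the Jacobi identity, and isomorphism to $\mathcal{A}$ is ruled out because these identities transport along any operation-preserving bijection. The only difference is stylistic --- you package the two exclusions as a proof by contradiction against rigidity, while the paper states them directly --- so no further comment is needed.
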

%%%%%%%%%%%%%%%%%%%%%%%%%

\begin{proof}
The $\alpha$-twisting $A'_{\alpha}$ is non-trivial, since otherwise $\cdot_{\alpha}$ would be commutative associative and $\{\cdot,\cdot\}_{\alpha}$ would satisfy the Jacobi identity. For the same reason, the $\alpha$-twisting $A'_{\alpha}$ cannot be isomorphic to $A$, since otherwise $\cdot_{\alpha}$ would be commutative associative and $\{\cdot,\cdot\}_{\alpha}$ would satisfy the Jacobi identity.
\end{proof}
\begin{defn}
Let $(A,\cdot,\{\cdot,\cdot\},\alpha)$ be a transposed Hom-Poisson algebra. A representation of $(A,\cdot,\{\cdot,\cdot\},\alpha)$ is a quadruple $(s,\rho,\beta,V)$ such that $(s,\beta,V)$ is a bimodule of the Hom-commutative-associative algebra $(A,\cdot,\alpha)$ and $(\rho,\beta,V)$ is a representation of the Hom-Lie algebra  $(A,\{\cdot,\cdot\},\alpha)$ satisfying, for all $x,y\in A,~v\in V,$
\begin{align}
&2s(\{x,y\})\beta(v)=\rho(\alpha(x))s(y)v-\rho(\alpha(y))s(x)v,\label{ismail.1}\\
&2s(\alpha(x))\rho(y)v=\rho(x\cdot y)\beta(v)+\rho(\alpha(y))s(x)v,\label{ismail.2}\\
&\beta(s(x)v)=s(\alpha(x))\beta(v),~~~~\beta(\rho(x)v)=\rho(\alpha(x))\beta(v).\label{ismail.0}
\end{align}
\end{defn}
\begin{prop}\label{pro2}
Let $(A,\cdot,\{\cdot,\cdot\},\alpha)$ be a transposed Hom-Poisson algebra, and let $(s,\rho,\beta,V)$ be its  representation. Then, $(A\oplus V,\cdot',\{\cdot,\cdot\}',\alpha+\beta)$ is a transposed Hom-Poisson algebra, where $(A\oplus V,\cdot',\alpha+\beta)$ is the semi-direct product commutative Hom-associative algebra $A\ltimes_{s,\alpha,\beta} V$, and $(A\oplus V,\{\cdot,\cdot\}',\alpha+\beta)$ is the semi-direct product Hom-Lie algebra $A\ltimes_{\rho,\alpha,\beta} V$.
\end{prop}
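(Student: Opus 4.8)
The plan is to establish the three defining axioms of a transposed Hom-Poisson algebra on $A\oplus V$ in turn, and most of the work is already done. By Proposition~\ref{ass1}, the commutative Hom-associative part $(A\oplus V,\cdot',\alpha+\beta)=A\ltimes_{s,\alpha,\beta}V$ is already a commutative Hom-associative algebra, using that $(s,\beta,V)$ is a bimodule; and by Proposition~\ref{pro11}, the bracket part $(A\oplus V,\{\cdot,\cdot\}',\alpha+\beta)=A\ltimes_{\rho,\alpha,\beta}V$ is already a Hom-Lie algebra, using that $(\rho,\beta,V)$ is a representation. Hence the only remaining point is the transposed Hom-Leibniz identity \eqref{leibniz} for the combined structure.

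For this I would take three generic elements $X=x+u$, $Y=y+v$, $Z=z+w$ of $A\oplus V$ (with $x,y,z\in A$ and $u,v,w\in V$) and expand both sides of
\[
2(\alpha+\beta)(Z)\cdot'\{X,Y\}'=\{Z\cdot'X,(\alpha+\beta)(Y)\}'+\{(\alpha+\beta)(X),Z\cdot'Y\}'
\]
using the explicit formulas for $\cdot'$, $\{\cdot,\cdot\}'$ and $\alpha+\beta$. Each side then splits into a component lying in $A$ and one lying in $V$, and I would verify the identity componentwise. The $A$-component of the left-hand side is $2\alpha(z)\cdot\{x,y\}$ while that of the right-hand side is $\{z\cdot x,\alpha(y)\}+\{\alpha(x),z\cdot y\}$, so equality of the $A$-parts is exactly the transposed Hom-Leibniz identity \eqref{leibniz} already holding in $A$.

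The substance of the argument is the $V$-component, which after expansion becomes a sum of terms each carrying exactly one of the three module vectors $u$, $v$, $w$. Collecting these three groups separately, the coefficient of $w$ yields the required identity $2s(\{x,y\})\beta(w)=\rho(\alpha(x))s(y)w-\rho(\alpha(y))s(x)w$, which is precisely \eqref{ismail.1}; the coefficient of $v$ gives $2s(\alpha(z))\rho(x)v=\rho(z\cdot x)\beta(v)+\rho(\alpha(x))s(z)v$ and the coefficient of $u$ gives $2s(\alpha(z))\rho(y)u=\rho(z\cdot y)\beta(u)+\rho(\alpha(y))s(z)u$, both of which are instances of \eqref{ismail.2} under the substitutions $(x,y)\mapsto(z,x)$ and $(x,y)\mapsto(z,y)$ respectively. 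The compatibility relations \eqref{ismail.0} are not invoked directly here; they have already been absorbed into the Hom-module conditions \eqref{Cond2} and \eqref{repLie3} that make $\alpha+\beta$ multiplicative for $\cdot'$ and $\{\cdot,\cdot\}'$ in the first two axioms.

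The calculation is entirely routine once the elements are expanded; the only real care needed is bookkeeping. The main (mild) obstacle is keeping the six $V$-valued terms on the right-hand side correctly sorted by which module vector they act on, and recognizing that the two groups of mixed $\rho\circ s$ terms are the \emph{same} identity \eqref{ismail.2} applied with two different substitutions. Once the three groups are isolated, each matches a defining representation axiom verbatim, which together with the $A$-component completes the verification.
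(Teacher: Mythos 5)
Your proposal is correct and follows essentially the same route as the paper's proof: invoke Propositions~\ref{ass1} and~\ref{pro11} for the commutative Hom-associative and Hom-Lie structures, then expand the transposed Hom-Leibniz identity on $A\oplus V$, matching the $A$-component with \eqref{leibniz} in $A$ and the three groups of $V$-terms (sorted by module vector) with \eqref{ismail.1} and the two substitution instances of \eqref{ismail.2}. The paper performs exactly this computation, grouping terms just as you describe.
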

\begin{proof}
Let $(A,\cdot,\{\cdot,\cdot\},\alpha)$ be a transposed Hom-Poisson algebra, and let $(s,\rho,\beta,V)$ be its  representation. By Proposition \ref{ass1} and Proposition \ref{pro11},
$(A\oplus V,\cdot',\alpha+\beta)$ is a commutative Hom-associative algebra, and
$(A\oplus V,\{\cdot,\cdot\}',\alpha+\beta)$ is a Hom-Lie algebra respectively.
Now, we show that the transposed Hom-Leibniz identity is satisfied.
For all $x_1,x_2,x_3\in A$ and $v_1,v_2,v_3\in V,$
\begin{align*}
&2(\alpha+\beta)(x_3+v_3)\cdot'\{x_1+v_1,x_2+v_2\}' -\{(x_3+v_3)\cdot'(x_1+v_1),(\alpha+\beta)(x_2+v_2)\}'\\
&\quad \quad -\{(\alpha+\beta)(x_1+v_1),(x_3+v_3)\cdot'(x_2+v_2)\}'\\
&=2(\alpha+\beta)(x_3+v_3)\cdot'(\{x_1,x_2\}+\rho(x_1)v_2-\rho(x_2)v_1)\\
&\quad \quad -\{x_3\cdot x_1+s(x_3)v_1+s(x_1)v_3,(\alpha+\beta)(x_2+v_2)\}'\\
&\quad \quad -\{(\alpha+\beta)(x_1+v_1),x_3\cdot x_2+s(x_3)v_2+s(x_2)v_3\}'\\
&=2\Big(\alpha(x_3)\cdot\{x_1,x_2\}+s(\alpha(x_3))\rho(x_1)v_2-s(\alpha(x_3))\rho(x_2)v_1+s((\{x_1,x_2\})\beta(v_3)\Big)\\
&\quad \quad -\Big(\{x_3\cdot x_1,\alpha(x_2)\}+\rho(x_3\cdot x_1)\beta(v_2) -\rho(\alpha(x_2))s(x_3)v_1-\rho(\alpha(x_2))s(x_1)v_3\Big)\\
&\quad \quad -\Big(\{\alpha(x_1),x_3\cdot x_2\} +\rho(\alpha(x_1))s(x_3)v_2+\rho(\alpha(x_1))s(x_2)v_3-\rho(x_3\cdot x_2)\beta(v_1)\Big)\\
&=\Big(2\alpha(x_3)\cdot\{x_1,x_2\}-\{x_3\cdot x_1,\alpha(x_2)\}-\{\alpha(x_1),x_3\cdot x_2\}\Big)\\
&\quad \quad +\Big(2s(\alpha(x_3))\rho(x_1)v_2-\rho(x_3\cdot x_1)\beta(v_2)-\rho(\alpha(x_1))s(x_3)v_2\Big)\\
&\quad \quad -\Big(2s(\alpha(x_3))\rho(x_2)v_1-\rho(x_3\cdot x_2)\beta(v_1)-\rho(\alpha(x_2))s(x_3)v_1\Big)\\
&\quad \quad +\Big(2s(\{x_1,x_2\})\beta(v_3)+\rho(\alpha(x_2)s(x_1)v_3-\rho(\alpha(x_1))s(x_2)v_3\Big) =0+0+0+0=0.
\end{align*}
Then, $(A\oplus V,\cdot',\{\cdot,\cdot\}',\alpha+\beta)$ is a transposed Hom-Poisson algebra.
\end{proof}
\noindent
The transposed Hom-Poisson algebra $(A\oplus V,\cdot',\{\cdot,\cdot\}',\alpha+\beta)$ in
Proposition \ref{pro2} is denoted by $A \times_{s,\rho, \alpha,\beta} V.$

\begin{ex}
Let $(A,\cdot,\{\cdot,\cdot\},\alpha)$ be a transposed Hom-Poisson algebra, and let $S_{\cdot}(a)b=a\cdot b=b\cdot a$ and $ad(a)b=\{a,b\}$, for all $a,b\in A$.
Then, $(S_{\cdot},ad,\alpha,A)$ is a regular representation of $A$.
\end{ex}
\begin{thm}
Let $(A,\cdot,\{\cdot,\cdot\},\alpha)$ be a transposed Hom-Poisson algebra and $(s,\rho,\beta,V)$ a representation. Define $s^{\ast},\rho^{\ast}:A\rightarrow End(V^{\ast})$ by
\begin{equation}\label{eqq1}\langle s^{\ast}(x)u^{\ast},v\rangle=-\langle s(x)v,u^{\ast}\rangle,~~\langle \rho^{\ast}(x)u^{\ast},v\rangle=-\langle \rho(x)v,u^{\ast}\rangle.\end{equation}
Let $\alpha^{\ast}:A^{\ast}\rightarrow A^{\ast},~\beta^{\ast}:V^{\ast}\rightarrow V^{\ast}$
be the dual maps of $\alpha$ and $\beta$ respectively, that is,
\begin{equation}\label{eqq2}
\alpha^{\ast}(x^{\ast})(y)=x^{\ast}(\alpha(y)),~~\beta^{\ast}(u^{\ast})(v)=u^{\ast}(\beta(v)).
\end{equation}
If, in addition, for any $x,y\in A,~x^{\ast}\in A^{\ast},~v\in V,~v^{\ast}\in V^{\ast},$
\begin{align}
&2s(\{x,y\})\beta(v)=s(y)\rho(\alpha(x))v-s(x)\rho(\alpha(y))v,\label{isma1l.2}\\
&2\rho(y)s(\alpha(x))v=\rho(x\cdot y)\beta(v)+s(x)\rho(\alpha(y))v,\label{ismaill.3}\\
&\beta(s(x)v)=s(x)\beta(v),~~\beta(\rho(\alpha(x))v)=\rho(x)\beta(v),\label{isma1ll.1}
\end{align}
then $(-s^{\ast},\rho^{\ast},\beta^{\ast},V^{\ast})$ is a representation of $A$.
Moreover, $A\times_{-s^{\ast},\rho^{\ast},\alpha^{\ast},\beta^{\ast}}V^{\ast}$
is also transposed Hom-Poisson algebra.
\end{thm}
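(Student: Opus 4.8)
The plan is to verify directly that the quadruple $(-s^{\ast},\rho^{\ast},\beta^{\ast},V^{\ast})$ meets the three defining requirements of a representation of a transposed Hom-Poisson algebra: that $(-s^{\ast},\beta^{\ast},V^{\ast})$ is a bimodule of the commutative Hom-associative algebra $(A,\cdot,\alpha)$, i.e. satisfies \eqref{Cond1}--\eqref{Cond2}; that $(\rho^{\ast},\beta^{\ast},V^{\ast})$ is a representation of the Hom-Lie algebra $(A,\{\cdot,\cdot\},\alpha)$, i.e. satisfies \eqref{repLie1}, \eqref{repLie3}; and that the three compatibility conditions \eqref{ismail.1}--\eqref{ismail.0} hold for the dual data. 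Each of these is an equality of operators on $V^{\ast}$, so first I would reduce every such equality to a scalar identity by pairing both sides against an arbitrary $v\in V$ and then transporting everything back to $V$ through the defining relations \eqref{eqq1}--\eqref{eqq2}.

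The engine of the computation is that dualization reverses the order of composition while inserting the signs dictated by \eqref{eqq1}: for $\phi\in V^{\ast}$ one has $\big((-s^{\ast})(x)\phi\big)(v)=\phi(s(x)v)$ and $\big(\rho^{\ast}(x)\phi\big)(v)=-\phi(\rho(x)v)$, so a composite such as $\rho^{\ast}(\alpha(x))(-s^{\ast})(y)\phi$, evaluated at $v$, collapses to $-\phi\big(s(y)\rho(\alpha(x))v\big)$, with $s(y)$ and $\rho(\alpha(x))$ now in the opposite order to that of the original axioms. This is precisely why the hypotheses are imposed in the transposed form \eqref{isma1l.2}--\eqref{ismaill.3} rather than as \eqref{ismail.1}--\eqref{ismail.2}: the order-reversed products $s(y)\rho(\alpha(x))v$ and $\rho(y)s(\alpha(x))v$ appearing there are exactly the combinations produced on $V$ after transposition. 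The two $\beta^{\ast}$-equivariance identities in \eqref{ismail.0} for the dual reduce, after pairing, to $s(x)\beta(v)=s(\alpha(x))\beta(v)$ and its analogue for $\rho$, which follow from the extra hypotheses \eqref{isma1ll.1} combined with the original Hom-module conditions \eqref{Cond2} and \eqref{repLie3}; the bimodule identity \eqref{Cond1} and the Hom-Jacobi identity \eqref{repLie1} dualize similarly, using commutativity of $\cdot$ and skew-symmetry of $\{\cdot,\cdot\}$ to match the transposed orders.

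The delicate point throughout is the simultaneous bookkeeping of the sign flips coming from \eqref{eqq1} and the order reversals from transposition; the single minus sign carried by $-s^{\ast}$ but not by $\rho^{\ast}$ is what reconciles the commutative-associative sign convention with the coadjoint (Lie) one, and the hard part will be ensuring that the signs in the two cross-compatibility identities \eqref{ismail.1} and \eqref{ismail.2} come out consistently rather than off by an overall sign. Once the three families of identities are checked in this fashion, $(-s^{\ast},\rho^{\ast},\beta^{\ast},V^{\ast})$ is a representation of $A$, and the final assertion is then immediate: applying Proposition \ref{pro2} to this representation yields the semidirect-product transposed Hom-Poisson structure on $A\oplus V^{\ast}$, namely $A\times_{-s^{\ast},\rho^{\ast},\alpha^{\ast},\beta^{\ast}}V^{\ast}$.
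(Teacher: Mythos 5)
Your reduction strategy --- pair every operator identity on $V^{\ast}$ against an arbitrary $v\in V$, transport through \eqref{eqq1}--\eqref{eqq2}, and use the fact that dualization reverses composition so that exactly the transposed hypotheses \eqref{isma1l.2}, \eqref{ismaill.3}, \eqref{isma1ll.1} are what appear --- is precisely how the paper proves \eqref{ismail.0}, \eqref{ismail.1} and \eqref{ismail.2} for $(-s^{\ast},\rho^{\ast},\beta^{\ast},V^{\ast})$, and your appeal to Proposition \ref{pro2} for the final semidirect-product assertion is the intended conclusion of the ``Moreover'' part. On all of that your proposal and the paper coincide.

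The gap is the sentence claiming that \eqref{Cond1} for $-s^{\ast}$ and \eqref{repLie1} for $\rho^{\ast}$ ``dualize similarly, using commutativity of $\cdot$ and skew-symmetry of $\{\cdot,\cdot\}$ to match the transposed orders.'' Run your own reduction on \eqref{Cond1}: pairing with $v$ gives
\[
\langle (-s^{\ast})(x\cdot y)\beta^{\ast}(u^{\ast}),v\rangle=\langle \beta(s(x\cdot y)v),u^{\ast}\rangle,
\qquad
\langle (-s^{\ast})(\alpha(x))(-s^{\ast})(y)u^{\ast},v\rangle=\langle s(y)s(\alpha(x))v,u^{\ast}\rangle,
\]
and after simplifying the left-hand side with \eqref{isma1ll.1} and \eqref{Cond1}, what is required is the genuine operator commutation $s(\alpha(x))s(y)=s(y)s(\alpha(x))$. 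Commutativity of $\cdot$ together with \eqref{Cond1} only yields $s(\alpha(x))s(y)=s(\alpha(y))s(x)$: the arguments are exchanged, but the order of composition is not reversed. These two statements coincide when $\alpha=\beta=\operatorname{id}$ (which is why the classical argument works), but not in the twisted case, and none of the hypotheses \eqref{isma1l.2}, \eqref{ismaill.3}, \eqref{isma1ll.1} --- which constrain only mixed $s$--$\rho$ products and $\beta$-equivariance --- supplies the missing commutation. The Lie side fails the same way: dualizing \eqref{repLie1} requires $-\beta\rho(\{x,y\})=\rho(y)\rho(\alpha(x))-\rho(x)\rho(\alpha(y))$, whereas \eqref{repLie3} and \eqref{repLie1} only give $\beta\rho(\{x,y\})=\rho(\alpha(\{x,y\}))\beta$ and $\rho(\{x,y\})\beta=\rho(\alpha(x))\rho(y)-\rho(\alpha(y))\rho(x)$, which are not the sign-reversed, order-reversed form. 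So this step of your argument would fail as written. To be fair, you have in fact exposed a defect of the paper itself: its proof verifies only \eqref{ismail.0}, \eqref{ismail.1}, \eqref{ismail.2} and is completely silent on \eqref{Cond1} and \eqref{repLie1} for the dual structure maps, so those two identities are neither proved there nor derivable from the stated hypotheses; they would have to be imposed as additional assumptions (or obtained under extra ones, e.g.\ suitable invertibility of $\alpha$ and $\beta$), not deduced from commutativity and skew-symmetry.
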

\begin{proof}
For any $x,y\in A,~x^{\ast}\in A^{\ast},~v\in V,~u^{\ast}\in V^{\ast}$, according to \eqref{eqq1} and \eqref{eqq2},
\begin{align*}
    \langle\beta^{\ast}(\rho^{\ast}(x)u^{\ast}),v\rangle&=-\langle\rho^{\ast}(x)u^{\ast},\beta(v)\rangle
    =-\langle \rho(x)\beta(v),u^{\ast}\rangle\\
    &=-\langle\beta(\rho(\alpha(x))v),u^{\ast}\rangle
    =-\langle \rho(\alpha(x))v,\beta^{\ast}(u^{\ast})\rangle
    =\langle\rho^{\ast}(\alpha(x))\beta^{\ast}(u^{\ast}),v\rangle.
\end{align*}
So, \eqref{ismail.0} holds for $\rho^{\ast}$. Similarly, \eqref{ismail.0} holds for $-s^{\ast}$. According to \eqref{eqq1}-\eqref{ismaill.3},
\begin{align*}
    &\langle -2s^{\ast}(\{x,y\})\beta^{\ast}(u^{\ast})+\rho^{\ast}(\alpha(x))s^{\ast}(y)u^{\ast}+\rho^{\ast}(\alpha(y))s^{\ast}(x)u^{\ast},v\rangle\\
    &\quad \quad =\langle -2\beta(s(\{x,y\})v)+s(y)\rho(\alpha(x))v-s(x)\rho(\alpha(y))v,u^{\ast}\rangle=0,
\\
    &\langle -2s^{\ast}(\alpha(x))\rho^{\ast}(y)v-\rho(x\cdot y)\beta^{\ast}(u^{\ast})+\rho^{\ast}(\alpha(y))s^{\ast}(x)u^{\ast},v\rangle\\
    &\quad \quad =\langle \beta(\rho(x\cdot y)v)+s(x)\rho(\alpha(y))v-2\rho(y)s(\alpha(x)),u^{\ast}\rangle=0.
\end{align*}
Therefore, \eqref{ismail.1} and \eqref{ismail.2} hold for $(-s^{\ast},\rho^{\ast},\beta^{\ast},V^{\ast})$. Then, $(-s^{\ast},\rho^{\ast},\beta^{\ast},V^{\ast})$ is a representation of $(A, \cdot,\{\cdot,\cdot\},\alpha)$.
\end{proof}
\begin{ex}
  Let $(A,\cdot,\{\cdot,\cdot\},\alpha)$ be a transposed Hom-Poisson algebra. The quadruples $(A,S,ad,\alpha)$ and $(A^{\ast},-S^{\ast},ad^{\ast},\alpha^{\ast})$ are representations of transposed Hom-Poisson algebra $(A, \cdot,\{\cdot,\cdot\},\alpha)$.
\end{ex}
\begin{thm}
Let $\mathcal{A}=(A,\cdot_A,\{\cdot,\cdot\}_A,\alpha)$ and $\mathcal{B}=(B,\cdot_B,\{\cdot,\cdot\}_{B},\beta)$ be two transposed Hom-Poisson algebras. Suppose that there are linear maps $s_A,\rho_A:A\rightarrow gl(B)$
and $s_B,\rho_B:B\rightarrow gl(A)$ such that $A\bowtie^{\rho_A,\beta}_{\rho_B,\alpha}B$ is a matched pair of Hom-Lie algebras, and  $A\bowtie^{s_A,\beta}_{s_B,\alpha}B$ is a matched pair of commutative Hom-associative algebras, and for all $x,y\in A,~a,b\in B$, the following equalities hold:
\begin{align}\label{101}
\begin{split}
2s_A(\alpha(x))\{a,b\}_B &=\rho_B(s_B(a)x)\beta(b)+\{s_A(x)a,\beta(b)\}_B \\
 &\quad -\rho_A(s_B(b)x)\beta(a)+\{\beta(a),s_A(x)b\}_B,
\end{split} \\
\label{102}
\begin{split}
2\beta(a)\cdot_B\rho_A(x)b &-2s_A(\rho_B(b)x)\beta(a) \\ &=\{s_A(x)a,\beta(b)\}_B+\rho_A(s_B(a)x)\beta(b)+\rho_A(\alpha(x))(a\cdot_B b),
\end{split} \\
\begin{split}\label{103}
2s_B(\beta(a))\{x,y\}_A & =\rho_A(s_A(x)a)\alpha(a)+\{s_B(a)x,\alpha(y)\}_A \\ &
\quad -\rho_B(s_A(y)a)\alpha(x)+\{\alpha(y),s_B(a)y\}_A,
\end{split} \\
\label{104}
\begin{split}
2\alpha(x)\cdot_A\rho_B(a)y & -2s_B(\rho_A(y)a)\alpha(x)\\
& =\{s_B(a)x,\alpha(y)\}_A+\rho_B(s_A(x)a)\alpha(y)+\rho_B(\beta(a))(x\cdot_A y).
\end{split}
\end{align}
Then, $(A,B,s_A,\rho_A,\beta,s_B,\rho_B,\alpha)$ is called a matched pair of transposed Hom-Poisson algebras. In this case, on the direct sum
$A\oplus B$ of the underlying vector spaces of $\mathcal{A}$ and $\mathcal{B}$, there is a transposed Hom-Poisson algebra structure which is given for any $x,y\in A,~a,b\in B$ by
\begin{align*}
(x + a) \cdot (y + b)&=x \cdot_A y + (l_A(x)b + r_A(y)a)+a \cdot_B b + (l_B(a)y + r_B(b)x), \\
[x+a,y+b]&=\{x,y\}_A+\rho_A(x)b-\rho_A(y)a+\{a,b\}_B+\rho_B(a)y-\rho_B(b)x, \\
(\alpha\oplus\beta)(x + a)&=\alpha(x) + \beta(a).
\end{align*}
\end{thm}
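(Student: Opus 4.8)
The plan is to establish the three defining properties of a transposed Hom-Poisson algebra for the quadruple $(A\oplus B,\cdot,[\cdot,\cdot],\alpha\oplus\beta)$ one at a time. The commutative Hom-associativity of $(A\oplus B,\cdot,\alpha\oplus\beta)$ is immediate from Theorem \ref{matched ass}: by hypothesis $A\bowtie^{s_A,\beta}_{s_B,\alpha}B$ is a matched pair of commutative Hom-associative algebras, and the product $\cdot$ in the statement coincides with the one produced there (the maps $l_A,r_A,l_B,r_B$ being $s_A,s_A,s_B,s_B$ by commutativity). Likewise, the Hom-Lie axioms \eqref{eq_hom_skewsym_identity}--\eqref{eq_hom_jacobi_identity} for $(A\oplus B,[\cdot,\cdot],\alpha\oplus\beta)$ follow from Theorem \ref{matched Lie} applied to the matched pair of Hom-Lie algebras $A\bowtie^{\rho_A,\beta}_{\rho_B,\alpha}B$. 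Thus the only substantive task is the transposed Hom-Leibniz identity \eqref{leibniz}.

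Since \eqref{leibniz} is separately linear in each of its three arguments, it suffices to verify it when each argument lies purely in $A$ or purely in $B$, giving $2^{3}=8$ cases, which I would label by words in $\{A,B\}$ recording the component of $x$, $y$, $z$. The cases $AAA$ and $BBB$ are exactly the transposed Hom-Leibniz identities of $\mathcal{A}$ and $\mathcal{B}$, which hold by assumption. Skew-symmetry of the bracket renders \eqref{leibniz} invariant under $x\leftrightarrow y$, collapsing the remaining six cases into the four representatives $AAB$, $BBA$, $BAA$, $ABB$; furthermore the whole set-up is symmetric under interchanging $A\leftrightarrow B$ together with $s_A\leftrightarrow s_B$, $\rho_A\leftrightarrow\rho_B$, $\alpha\leftrightarrow\beta$, so that $AAB$ mirrors $BBA$ and $BAA$ mirrors $ABB$. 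Hence only two genuinely independent computations remain.

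For each representative I would expand both sides of \eqref{leibniz} using the explicit formulas for $\cdot$ and $[\cdot,\cdot]$ on $A\oplus B$, then project the resulting equation onto its $A$-component and its $B$-component. The upshot is that each mixed case splits into two scalar identities: one is precisely a matched-pair condition among \eqref{101}--\eqref{104}, and the other is precisely a transposed-Hom-Poisson representation condition \eqref{ismail.1}--\eqref{ismail.2} for $(s_A,\rho_A,\beta,B)$ over $\mathcal{A}$ or for $(s_B,\rho_B,\alpha,A)$ over $\mathcal{B}$. Concretely, I expect $AAB$ to yield \eqref{ismail.1} for $(s_A,\rho_A,\beta,B)$ in the $B$-component and \eqref{103} in the $A$-component, and $ABB$ to yield \eqref{102} in the $B$-component and \eqref{ismail.2} for $(s_B,\rho_B,\alpha,A)$ in the $A$-component; the mirror cases $BBA$ and $BAA$ then deliver \eqref{101} and \eqref{104} together with the corresponding representation conditions. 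The module-map conditions \eqref{ismail.0} are what guarantee that $\alpha\oplus\beta$ interacts correctly with both operations and are used to align the twisted terms.

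The main obstacle is the bookkeeping of this last step: each expansion produces roughly a dozen terms spread across the two components, and one must track how the cross-actions $s_A,\rho_A,s_B,\rho_B$ compose so that the surviving terms assemble exactly into \eqref{101}--\eqref{104} and \eqref{ismail.1}--\eqref{ismail.2} rather than merely into something equivalent. In particular, the computation implicitly relies on $(s_A,\rho_A,\beta,B)$ and $(s_B,\rho_B,\alpha,A)$ being genuine representations of the transposed Hom-Poisson algebras $\mathcal{A}$ and $\mathcal{B}$, that is, on \eqref{ismail.1}--\eqref{ismail.0}, which is part of the matched-pair data; once those hold, every mixed case reduces to one of the stated hypotheses and \eqref{leibniz} follows, completing the verification that $A\oplus B$ is a transposed Hom-Poisson algebra.
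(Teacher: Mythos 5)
Your strategy is the same as the paper's: invoke Theorem~\ref{matched ass} and Theorem~\ref{matched Lie} for the commutative Hom-associative and Hom-Lie structures on $A\oplus B$, and then check the transposed Hom-Leibniz identity \eqref{leibniz} by direct expansion (the paper compresses this last step into ``it is easy to verify''). Your bookkeeping of that expansion is also correct: multilinearity gives eight cases, $AAA$ and $BBB$ are the hypotheses on $\mathcal{A}$ and $\mathcal{B}$, the $x\leftrightarrow y$ and $A\leftrightarrow B$ symmetries leave two independent mixed computations, and each mixed case splits into an $A$- and a $B$-component, one of which is one of \eqref{101}--\eqref{104} while the other is \eqref{ismail.1} or \eqref{ismail.2} for a cross-action. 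For instance, case $AAB$ does produce (a corrected form of) \eqref{103} in the $A$-component and $2s_A(\{x,y\}_A)\beta(b)=\rho_A(\alpha(x))s_A(y)b-\rho_A(\alpha(y))s_A(x)b$ in the $B$-component, and case $ABB$ produces \eqref{102} together with \eqref{ismail.2} for $(s_B,\rho_B,\alpha,A)$, exactly as you predict.

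The gap is your closing claim that \eqref{ismail.1}--\eqref{ismail.0} for $(s_A,\rho_A,\beta,B)$ and $(s_B,\rho_B,\alpha,A)$ are ``part of the matched-pair data.'' They are not: the stated hypotheses are only the Hom-Lie matched pair (giving \eqref{repLie1}, \eqref{repLie3} for $\rho_A,\rho_B$ and \eqref{Lie1}--\eqref{Lie2}), the commutative Hom-associative matched pair (giving \eqref{Cond1}, \eqref{Cond2} for $s_A,s_B$ and \eqref{comm1}--\eqref{comm4}), and \eqref{101}--\eqref{104}. The conditions \eqref{ismail.0} do follow (they are just \eqref{Cond2} and \eqref{repLie3}), but no stated hypothesis couples $s$ with $\rho$ except \eqref{101}--\eqref{104}, and those are consumed by the \emph{other} components of the mixed cases; the four identities of type \eqref{ismail.1}--\eqref{ismail.2} are genuinely independent of everything assumed. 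Indeed they can fail: take $\mathcal{A}=(\mathbb{K}[t],\cdot,\{f,g\}=fg'-f'g,\mathrm{id})$ (a transposed Poisson algebra by the paper's derivation construction), $B=\mathbb{K}$ with zero product, zero bracket and $\beta=\mathrm{id}$, and set $\rho_A=0$, $s_B=0$, $\rho_B=0$, $s_A(f)=f(0)$. Every hypothesis of the theorem holds, yet for $z=1_B$, $x=1$, $y=t$ one finds $2(\alpha\oplus\beta)(z)\cdot[x,y]=2s_A(\{1,t\})1_B=2\cdot 1_B$, while $[z\cdot x,(\alpha\oplus\beta)(y)]+[(\alpha\oplus\beta)(x),z\cdot y]=0$, so \eqref{leibniz} fails on $A\oplus B$. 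Thus your argument, as written, silently uses hypotheses the theorem does not provide; it is the correct proof of a corrected statement in which one additionally assumes that $(s_A,\rho_A,\beta,B)$ and $(s_B,\rho_B,\alpha,A)$ are representations of $\mathcal{A}$ and $\mathcal{B}$ in the sense of \eqref{ismail.1}--\eqref{ismail.0}. To be fair, this omission is inherited from the paper itself: its two-line proof hand-waves precisely at the point where these missing conditions are needed, and your expansion is what makes the omission visible.
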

\begin{proof}
By Theorem \ref{matched ass} and Theorem \ref{matched Lie}, we deduce that $(A\oplus B,\cdot,\alpha+\beta)$
is a commutative Hom-associative algebra and $(A\oplus B,[\cdot,\cdot],\alpha+\beta)$ is a Hom-Lie algebra.
Now, the
rest, it is easy (in a similar way as for Proposition \ref{matched ass}) to verify the transposed Hom-Leibniz identity satisfied.
\end{proof}
%%%%%%%%%%%%%%%%%%%%%%%%%%%%%%%%%%%%%%%%%%%%%%
\section{Manin triples and bialgebras of transposed Hom-Poisson algebras}\label{sec3}
%%%%%%%%%%%%%%%%%%%%%%%%%%%%%%%%%%%%%%%%%
In this section, we introduce the notion of Manin triples for  transposed Hom-Poisson algebras and transposed Hom-Poisson bialgebras and establish the equivalences between their
Manin triples.

\begin{defn}
 A bilinear form $\mathfrak B$
 on a transposed Hom-Poisson algebra $(A,\cdot,\{\cdot,\cdot\},\alpha)$ is called
 invariant if for all $x,y,z\in A$,
\begin{equation*}
\mathfrak{B}(x\cdot y,\alpha(z))=\mathfrak{B}(\alpha(x), y\cdot z),\quad \mathfrak{B}(\{x, y\},\alpha(z))=\mathfrak{B}(\alpha(x), \{y, z\}).
\end{equation*}
\end{defn}
\begin{defn}
A Manin triple of transposed Hom-Poisson algebras is a triple of transposed Hom-Poisson algebras $(A,A_1,A_2)$ together with a nondegenerate symmetric invariant bilinear
form $\mathfrak{B}$ on $A$ satisfying the following conditions:
\begin{enumerate}
    \item
    $A_1$ and $A_2$ are transposed Hom-Poisson subalgebras of $A$;
    \item
    as linear spaces, $A=A_1\oplus A_2$;
    \item
$A_{1}$ and $A_{2}$ are isotropic with
respect to $\mathfrak{B}$, that is,
for any $x_1,y_1\in A_1$ and any $x_2, y_2\in A_2$,
$\mathfrak{B}(x_1,y_1)=0=\mathfrak{B}(x_2,y_2).$
\end{enumerate}
\end{defn}
\begin{defn} Let  $(A,\cdot,\{\cdot,\cdot\},\alpha)$ be a transposed Hom-Poisson algebra. Suppose there is a transposed Hom-Poisson algebra structure $(A^*,\circ,[\cdot,\cdot],\alpha^{\ast})$ on the dual space of $A$, and there is a transposed Hom-Poisson algebra structure on the direct sum $A\oplus A^*$ of
the underlying linear spaces of $A$ and $A^*$, such that
$(A,\cdot,\{\cdot,\cdot\},\alpha)$ and $(A^*,\circ,[\cdot,\cdot],\alpha^{\ast})$ are subalgebras, and the natural symmetric bilinear form on $A\oplus A^*$ given, for all $x,y\in A, a^*,b^*\in A^*$, by
\begin{align}\label{eq:sbl1}
\mathfrak{B}_d(x+a^*,y+b^*) &= \langle  a^*,y\rangle +\langle x,b^*\rangle ,\; \\
\label{eq:sbl2}
(\alpha+\alpha^{\ast})(x+a^{\ast}) &= \alpha(x)+\alpha^{\ast}(a^{\ast}),
\end{align}
is
invariant, then $(A\oplus A^*,A,A^*,\alpha+\alpha^{\ast})$ is called a
standard Manin triple of transposed Hom-Poisson algebras associated to
$\mathfrak{B}_d$.
\end{defn}
\begin{rmk}
A standard Manin triple of transposed Hom-Poisson algebras is a Manin triple
of transposed Hom-Poisson algebras.
\end{rmk}
\begin{thm}
Let $(A,\cdot,\{\cdot,\cdot\},\alpha)$ and $(A^{\ast},\circ,[\cdot,\cdot],\alpha^{\ast})$ be two transposed Hom-Poisson algebras. Then, $(A\oplus A^{\ast},A,A^{\ast})$ is a standard Manin triple of transposed Poisson algebras if and only if $(A,A^{\ast},-S^{\ast}_{\cdot},ad^{\ast}_{\{\cdot,\cdot\}},\alpha^{\ast},-S^{\ast}_{\circ},ad^{\ast}_{[\cdot,\cdot]},\alpha)$ is a matched pair of transposed Hom-Poisson algebras.
\end{thm}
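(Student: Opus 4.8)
The plan is to treat this as the Hom-analogue of the classical dictionary between Manin triples and matched pairs, using the matched-pair construction for transposed Hom-Poisson algebras proved above as the bridge. Suppose first that $(A\oplus A^{\ast},A,A^{\ast})$ is a standard Manin triple. Since $A$ and $A^{\ast}$ are transposed Hom-Poisson subalgebras whose direct sum is the whole space, the transposed Hom-Poisson structure on $A\oplus A^{\ast}$ is, by that theorem together with Theorem \ref{matched ass} and Theorem \ref{matched Lie}, governed by a quadruple of cross-actions $s_A,\rho_A\colon A\to gl(A^{\ast})$ and $s_B,\rho_B\colon A^{\ast}\to gl(A)$ satisfying \eqref{101}--\eqref{104}. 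Concretely these are read off from the mixed products and brackets: for $x\in A$ and $a^{\ast}\in A^{\ast}$ one has $x\cdot a^{\ast}=s_A(x)a^{\ast}+s_B(a^{\ast})x$ and $[x,a^{\ast}]=\rho_A(x)a^{\ast}-\rho_B(a^{\ast})x$, with the first summand in $A^{\ast}$ and the second in $A$. Thus the only remaining point in this direction is that invariance of $\mathfrak{B}_d$ forces these cross-actions to be the dual representations $-S^{\ast}_{\cdot},\,ad^{\ast}_{\{\cdot,\cdot\}}$ and $-S^{\ast}_{\circ},\,ad^{\ast}_{[\cdot,\cdot]}$.

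First I would unwind the two invariance identities for $\mathfrak{B}_d$ on mixed triples. Feeding $x,y\in A$ and $a^{\ast}\in A^{\ast}$ into the associative invariance $\mathfrak{B}_d(u\cdot v,(\alpha+\alpha^{\ast})(w))=\mathfrak{B}_d((\alpha+\alpha^{\ast})(u),v\cdot w)$ and using \eqref{eq:sbl1}--\eqref{eq:sbl2} together with the dual-map relation \eqref{eqq2}, the two halves of $\mathfrak{B}_d$ collapse and one is left with an identity equating $\langle s_A(y)a^{\ast},\alpha(x)\rangle$ with the pairing of $x\cdot y$ against $\alpha^{\ast}(a^{\ast})$. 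This should be compared with the defining relation \eqref{eqq1} of the dual $-S^{\ast}_{\cdot}$ of the regular representation of $(A,\cdot,\alpha)$: matching the two determines $s_A$ as $-S^{\ast}_{\cdot}$ up to the convention-dependent sign and the Hom-twist by $\alpha$, and the symmetric substitution pins down $s_B=-S^{\ast}_{\circ}$. Repeating the computation with the Lie invariance $\mathfrak{B}_d(\{u,v\},(\alpha+\alpha^{\ast})(w))=\mathfrak{B}_d((\alpha+\alpha^{\ast})(u),\{v,w\})$ and the coadjoint form of \eqref{eqq1} identifies $\rho_A=ad^{\ast}_{\{\cdot,\cdot\}}$ and $\rho_B=ad^{\ast}_{[\cdot,\cdot]}$. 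Hence the Manin triple produces exactly the asserted matched pair.

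For the converse I would run the construction backwards. Given the matched pair $(A,A^{\ast},-S^{\ast}_{\cdot},ad^{\ast}_{\{\cdot,\cdot\}},\alpha^{\ast},-S^{\ast}_{\circ},ad^{\ast}_{[\cdot,\cdot]},\alpha)$, the matched-pair theorem immediately yields a transposed Hom-Poisson structure on $A\oplus A^{\ast}$ in which $A$ and $A^{\ast}$ are subalgebras, so conditions (1)--(2) of the Manin-triple definition are automatic. Symmetry and nondegeneracy of $\mathfrak{B}_d$, and the isotropy (3), are immediate from \eqref{eq:sbl1}. Invariance then reduces, after splitting each triple into $A$- and $A^{\ast}$-parts and discarding the purely $A$ or purely $A^{\ast}$ summands (which are invariant because $A,A^{\ast}$ are subalgebras), to precisely the mixed identities of the previous paragraph; but with the actions now prescribed as $-S^{\ast}_{\cdot},ad^{\ast}_{\{\cdot,\cdot\}}$ and their $A^{\ast}$-counterparts, those identities hold by the defining relations \eqref{eqq1}--\eqref{eqq2}. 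This gives invariance of $\mathfrak{B}_d$ and hence a standard Manin triple.

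The main obstacle is this middle step: establishing the equivalence between invariance of $\mathfrak{B}_d$ and the four cross-actions being the dual/coadjoint representations, while correctly carrying the Hom-twist $\alpha$ that appears both in the invariance axiom $\mathfrak{B}(x\cdot y,\alpha(z))=\mathfrak{B}(\alpha(x),y\cdot z)$ and in \eqref{eq:sbl2}. The component bookkeeping must confirm that no compatibility beyond \eqref{101}--\eqref{104} is required, and in particular that the auxiliary hypotheses \eqref{isma1l.2}--\eqref{isma1ll.1} guaranteeing that $(-S^{\ast}_{\cdot},ad^{\ast}_{\{\cdot,\cdot\}},\alpha^{\ast},A^{\ast})$ is a genuine representation are in force here; the transposed Hom-Leibniz identity \eqref{leibniz} on $A$ and on $A^{\ast}$ is what supplies them. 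Once the dictionary between invariance and the dual actions is in place, both implications drop out of the already proven matched-pair theorems, so the sign-and-twist bookkeeping is where the real work lies.
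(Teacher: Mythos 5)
Your plan defers its essential step, and that step, carried out the way you insist it must be, fails. You correctly observe that the paper's definition of invariance is twisted, $\mathfrak{B}(x\cdot y,\alpha(z))=\mathfrak{B}(\alpha(x),y\cdot z)$, and you promise to "correctly carry the Hom-twist." But take the matched-pair product built from the dual actions, set $a=b=0$, and test twisted invariance of $\mathfrak{B}_d$ on $(x,y,c)$ with $x,y\in A$, $c\in A^{\ast}$: the left side is $\mathfrak{B}_d(x\cdot y,\alpha^{\ast}(c))=\langle \alpha(x\cdot y),c\rangle$ by \eqref{eqq2}, while the right side is $\mathfrak{B}_d(\alpha(x),-S^{\ast}_{\cdot}(y)c+\cdots)=\langle y\cdot\alpha(x),c\rangle$ by \eqref{eqq1}. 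Equality for all $c$ forces $\alpha(x\cdot y)=\alpha(x)\cdot y$, which is false in general: in the paper's own two-dimensional commutative Hom-associative algebra with $e_2\cdot e_2=e_1$, $\alpha(e_1)=e_1$, $\alpha(e_2)=-e_2$, one has $\alpha(e_2\cdot e_2)=e_1$ but $\alpha(e_2)\cdot e_2=-e_1$. So the "dictionary" between invariance (in the twisted sense) and the dual actions that both of your implications funnel through does not exist; your claim that the mixed identities "hold by the defining relations \eqref{eqq1}--\eqref{eqq2}" is not true under the twisted reading. The paper's own proof avoids this only by silently switching notions: it verifies the untwisted identity $\mathfrak{B}_d(u\ast v,w)=\mathfrak{B}_d(u,v\ast w)$ (no $\alpha$'s anywhere), which does close up using only commutativity of $\cdot$ and $\circ$. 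A correct write-up must either adopt that untwisted reading or add hypotheses (e.g., $\alpha=\mathrm{id}$, or invertible $\alpha$ with suitably $\alpha$-twisted dual actions); your proposal, which explicitly commits to the twisted axiom, would run into the contradiction above rather than the claimed equivalence.

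Two further gaps, plus the comparison you should be aware of. First, you assert that the hypotheses \eqref{isma1l.2}--\eqref{isma1ll.1}, which the paper needs for $(-S^{\ast}_{\cdot},ad^{\ast}_{\{\cdot,\cdot\}},\alpha^{\ast},A^{\ast})$ to be a representation at all, "are supplied by the transposed Hom-Leibniz identity \eqref{leibniz}." They are not: for the regular bimodule, \eqref{isma1ll.1} reads $\alpha(x\cdot v)=x\cdot\alpha(v)$ and $\alpha(\{\alpha(x),v\})=\{x,\alpha(v)\}$, which are constraints on $\alpha$ independent of \eqref{leibniz}; without them the matched pair in the statement is not even well formed. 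Second, your Manin-to-matched-pair direction uses the matched-pair theorems "in reverse" (a transposed Hom-Poisson structure on $A\oplus A^{\ast}$ with both summands subalgebras yields cross-actions satisfying \eqref{101}--\eqref{104}); the paper proves those theorems only in the forward direction, and pinning the actions down from invariance additionally needs surjectivity of $\alpha$ (or an equivalent nondegeneracy argument). For calibration: the paper's proof of this theorem consists solely of the forward implication (matched pair $\Rightarrow$ standard Manin triple), done by expanding $\mathfrak{B}_d$ on products and brackets via \eqref{eqq1}; the converse is not proved there. Your outline of that forward direction agrees with the paper's computation, but only modulo the twist discrepancy, which is precisely the "bookkeeping" you postponed.
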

\begin{proof}
Consider the following four maps, given for all $ x, v, u \in A $, $ x^{\ast}, v^{\ast}, u^{\ast} \in A^{\ast} $, by
\begin{eqnarray*}
&&S^{\ast}_{\cdot}: A \rightarrow gl(A^{\ast}), \langle S^{\ast}_{\cdot}(x)u^{\ast}, v \rangle = -\langle
u^{\ast},S_{\cdot}(x)v  \rangle,\cr
&&ad^{\ast}_{\{\cdot,\cdot\}} : A \rightarrow gl(A^{\ast}), \langle ad^{\ast}_{\{\cdot,\cdot\}}(x)u^{\ast}, v \rangle = \langle
u^{\ast},ad_{\{\cdot,\cdot\}}(x)v  \rangle ,\cr
&&S^{\ast}_{\circ} : A^{\ast} \rightarrow gl(A), \langle S^{\ast}_{\circ}(x^{\ast})u,
v^{\ast} \rangle =- \langle u,S_{\circ}(x^{\ast})v^{\ast}  \rangle, \cr
&&ad^{\ast}_{[\cdot,\cdot]} : A^{\ast} \rightarrow gl(A), \langle ad^{\ast}_{[\cdot,\cdot]}(x^{\ast})u,
 v^{\ast} \rangle = \langle u , ad_{[\cdot,\cdot]}(x^{\ast})v^{\ast}\rangle.
  \end{eqnarray*}
If $ (A, A^{\ast}, -S^{\ast}_{\cdot}, ad^{\ast}_{\{\cdot,\cdot\}}, \alpha^{\ast}, -S^{\ast}_{\circ},  ad^{\ast}_{[\cdot,\cdot]}, \alpha)$ is a matched pair of transposed Hom-Poisson algebras, then $A\bowtie^{-S^{\ast}_{\cdot},ad^{\ast}_{\{\cdot,\cdot\}},\alpha^{\ast}}_{-S^{\ast}_{\circ},ad^{\ast}_{[\cdot,\cdot]},\alpha} A^{\ast}$ is a transposed Hom-Poisson algebra with the product $ \ast $ and $[\cdot,\cdot]_\rho$ given by
\begin{align}
&(x + a) \ast (y + b)=x \cdot y - (S^{\ast}_{\cdot}(x)b + S^{\ast}_{\cdot}(y)a)+a \circ b - (S^{\ast}_{\circ}(a)y + S^{\ast}_{\circ}(b)x),\cr
&[[x+a,y+b]]=\{x,y\}+ad^{\ast}_{\{\cdot,\cdot\}}(x)b
-ad^{\ast}_{\{\cdot,\cdot\}}(y)a+[a,b]+ad^{\ast}_{[\cdot,\cdot]}(a)y-ad^{\ast}_{[\cdot,\cdot]}(b)x.\nonumber
\end{align}
Let us compute and compare
$\displaystyle \mathfrak{B}_d((x+a)\ast(y+b), (z+c))$
and $\mathfrak{B}_d((x+a), (y+b)\ast(z+c))$,  for all
$x, y, z \in A, a, b, c \in A^{*}:$
\begin{align*}
&\mathfrak{B}_d((x+a)\ast(y+b),z+c)\\
&\quad =B_d(x\cdot y-S^{\ast}_{\cdot}(x)b-S^{\ast}_{\cdot}(y)a+a\circ b-S^{\ast}_{\circ}(a)y-S^{\ast}_{circ}(b)x,z+c)\\
&\quad =\langle x\cdot y-S^{\ast}_{\circ}(a)y-S^{\ast}_{\circ}(b)x,c\rangle+\langle z,a\circ b-S^{\ast}_{\cdot}(x)b-S^{\ast}_{\cdot}(y)a\rangle\\
&\quad =\langle x\cdot y,c\rangle-\langle S^{\ast}_{\circ}(a)y,c\rangle-\langle S^{\ast}_{\circ}(b)x,c\rangle+\langle z,a\circ b\rangle -\langle z, S^{\ast}_{\cdot}(x)b\rangle-\langle z,S^{\ast}_{\cdot}(y) a\rangle\\
&\quad =\langle x\cdot y,c\rangle+\langle y,S_{\circ}(a)c\rangle+\langle x,S_{\circ}(b)c\rangle+\langle z,a\circ b\rangle +\langle S_{\cdot}(x)z, b\rangle+\langle S_{\cdot}(y)z, a\rangle\\
&\quad =\langle x\cdot y,c\rangle+\langle y,a\circ c\rangle+\langle x,b\circ c\rangle+\langle z,a\circ b\rangle +\langle x\cdot z, b\rangle+\langle y\cdot z, a\rangle,\\
&\mathfrak{B}_d((x+a),(y+b)\ast(z+c))\\
&\quad =B_d((x+a),y\cdot z-S^{\ast}_{\cdot}(y)c-S^{\ast}_{\cdot}(z)b-S^{\ast}_{\circ}(b)z-S^{\ast}_{\circ}(c)y)\\
&\quad =\langle x,b\circ c-S^{\ast}_{\cdot}(y)c-S^{\ast}_{\cdot}(z)b\rangle+\langle y\cdot z-S^{\ast}_{\circ}(b)z-S^{\ast}_{\circ}(c)y,a\rangle\\
&\quad =\langle x,b\circ c\rangle-\langle x ,S^{\ast}_{\cdot}(y)c\rangle-\langle x,S^{\ast}_{\cdot}(z)b\rangle
+\langle y\cdot z,a\rangle-\langle S^{\ast}_{\circ}(b)z,a\rangle-\langle S^{\ast}_{\circ}(c)y,a\rangle\\
&\quad =\langle x,b\circ c\rangle+\langle S_{\cdot}(y)x ,c\rangle+\langle S_{\cdot}(z)x,b\rangle
+\langle y\cdot z,a\rangle+\langle z,S_{\circ}(b)a\rangle+\langle y,S_{\circ}(c)a\rangle\\
&\quad =\langle x,b\circ c\rangle+\langle y\cdot x ,c\rangle+\langle z\cdot x,b\rangle
+\langle y\cdot z,a\rangle+\langle z,b\circ a\rangle+\langle y,c\circ a\rangle.
\end{align*}
It follows that
$\mathfrak{B}_d((x+a)\ast(y+b),z+c)=\mathfrak{B}_d((x+a),(y+b)\ast(z+c)).$
Similarly,
$\mathfrak{B}_d([[x+a,y+b]],z+c)=\mathfrak{B}_d((x+a),[[y+b,z+c]]).$
We deduce the invariance of the standard bilinear form $\mathfrak{B}_d$ on $A\oplus A^{\ast}$. Therefore, $(A\oplus A^{\ast},A,A^{\ast})$ is a standard Manin triple of transposed Poisson algebras $A$ and $A^{\ast}.$
\end{proof}
\begin{defn}
A Lie bialgebra is a quadruple $(A,\{\cdot,\cdot\},\delta,\alpha)$ where $(A,\{\cdot,\cdot\},\alpha)$ is a Hom-Lie algebra and $\delta:A\rightarrow \wedge^{2} A$ is a linear map such that $(A,\delta,\alpha)$ is a Hom-Lie coalgebra and $\delta$ is a $1$-cocycle on $A$ with coefficients in $A\otimes A$, that is, it satisfies the following equation for all $x,y\in A$,
\begin{equation}
    \delta(\{x,y\})=(ad_{x}\otimes \alpha+\alpha\otimes ad_{x})\delta(y)
    -(ad_{y}\otimes \alpha+\alpha\otimes ad_{y})\delta(x).
\end{equation}
A Hom-Lie bialgebra $(A, \{\cdot,\cdot\}, \delta, \alpha)$
is called coboundary if $\delta$ is a $1$-coboundary, that is, there exists an $r \in A\otimes A$ with $\alpha^{\otimes 2}(r) = r$,
such that for all $x\in A$,
\begin{equation}
\delta(\{x,y\})=(ad_{x}\otimes \alpha+\alpha\otimes ad_{x})\delta(y))r.\end{equation}
In this case, a coboundary Lie bialgebra is usually denoted by $(A,\{\cdot,\cdot\},r,\alpha).$
\end{defn}
\begin{defn}
Let $(A, \cdot,  \alpha)$ be a  commutative Hom-associative algebra. An infinitesimal Hom-bialgebra structure on $A$ is a linear
map $\Delta: A\rightarrow A\otimes A$ such that $(A, \Delta)$ is a Hom-coassociative coalgebra and $\Delta$ is a 1-cocycle,
that is, for all $a, b\in A$,
\begin{eqnarray}
\Delta(a\cdot b)=(S(\alpha(a))\otimes \alpha)\Delta(b)+(\alpha\otimes S(\alpha(b)))\Delta(a).
\end{eqnarray}
An infinitesimal Hom-bialgebra $(A, \cdot, \Delta, \alpha)$ is called coboundary if $\Delta$
1-coboundary, that is, there exists an $r\in A\otimes A$ with $\alpha^{\otimes 2}(r)=r$, such that for all $a\in A$,
\begin{eqnarray}
\Delta(a)=(S(a)\otimes \alpha-\alpha\otimes S(a))r.
\end{eqnarray}
In this case, a coboundary infinitesimal Hom-bialgebra is denoted by $(A,\cdot,r,\alpha)$.
\end{defn}
\begin{defn}
Let $(A,\cdot, \{\cdot, \cdot\},  \alpha)$ be a transposed Hom-Poisson algebra. Suppose that it is equipped with two
comultiplications $\delta, \Delta: A\rightarrow A\otimes A$ such that $(A, \Delta,\delta,  \alpha)$ is a transposed Hom-Poisson coalgebra, that is, $(A, \delta,\alpha)$ is a
Hom-Lie coalgebra and $(A, \Delta,\alpha)$ is a Hom-cocommutative coassociative coalgebra, and they satisfy the following compatibility condition for all $x\in A$:
\begin{eqnarray*}
(\alpha\otimes \Delta)\delta(x)=(\delta\otimes \alpha)\Delta(x)+(\tau\otimes id)(\alpha\otimes \delta)\Delta(x).
\end{eqnarray*}
If in addition, $(A, \{\cdot,\cdot \}, \delta, \alpha)$ is a Hom-Lie bialgebra, $(A, \cdot, \Delta, \alpha)$ is a commutative and associative
infinitesimal Hom-bialgebra, and $\delta$ and $\Delta$ are compatible in the sense that
\begin{eqnarray*}
\delta(x\circ y)&=&(S(\alpha(y))\otimes \alpha)\delta(x)+(S(\alpha(x))\otimes \alpha)\delta(y)\\
&&-(\alpha\otimes ad_{x})\Delta(y)-(\alpha\otimes ad_{y})\Delta(x),\\
\Delta(\{x, y\})&=&(ad_{\alpha(x)}\otimes \alpha +\alpha\otimes ad_{\alpha(x)} \Delta(y)\\
&&+(S(\alpha(y))\otimes \alpha-\alpha \otimes S(\alpha(y))  )\delta(x),
\end{eqnarray*}
then $(A, \cdot,\{\cdot,\cdot \},  \delta, \Delta, \alpha)$ is called a transposed Hom-Poisson bialgebra.
\end{defn}
\begin{thm} Let $(A, \cdot,\{\cdot, \cdot\},\alpha)$ be a transposed Hom-Poisson algebra equipped with two comultiplications
$\delta, \Delta: A\rightarrow A\otimes A$. Suppose that the mappings $\delta^{\ast}, \Delta^{\ast}: A^{\ast}\otimes A^{\ast}\subset (A\otimes A)^{\ast}\rightarrow A^{\ast}$ induce a transposed Hom-Poisson algebra structure on $(A^{\ast}, \alpha^{\ast})$, where $\delta^{\ast}$ and $\Delta^{\ast}$ correspond to the Hom-Lie bracket and the product of the commutative Hom-associative algebra respectively. Set $[\cdot, \cdot]=\delta^{\ast}$, $\circ=\Delta^{\ast}$. Then, the following conditions
are equivalent:
\begin{enumerate}
\item
$(A, \{\cdot, \cdot\}, \cdot, \delta, \Delta, \alpha)$ is a transposed Hom-Poisson bialgebra.
\item
$(A, A^{\ast}, ad^{\ast}_{\{\cdot, \cdot\}}, -S^{\ast}_{\cdot},\alpha^{\ast} ad^{\ast}_{[\cdot, \cdot]}, -S^{\ast}_{\circ},\alpha)$ is a matched pair of the transposed Hom-Poisson algebras.
\item
$(A\oplus A^{\ast}, A, A^{\ast}, \alpha+\alpha^{\ast})$  is a standard Manin triple of the transposed Hom-Poisson algebras with the bilinear form defined
in \eqref{eq:sbl1}, and the isotropic subalgebras are $A$ and $A^{\ast}$.
\end{enumerate}
\end{thm}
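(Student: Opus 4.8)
The plan is to prove the three-fold equivalence by establishing $(2)\Leftrightarrow(3)$ and $(1)\Leftrightarrow(2)$ separately, with the second being where essentially all the work lies. The equivalence $(2)\Leftrightarrow(3)$ is immediate: it is precisely the content of the preceding theorem relating standard Manin triples of transposed Hom-Poisson algebras to matched pairs, specialized to the representations $-S^{\ast}_{\cdot},ad^{\ast}_{\{\cdot,\cdot\}}$ of $A$ on $A^{\ast}$ (the commutative-associative action carrying the sign, the Lie action being the coadjoint) and $-S^{\ast}_{\circ},ad^{\ast}_{[\cdot,\cdot]}$ of $A^{\ast}$ on $A$. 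Once $(A^{\ast},\circ,[\cdot,\cdot],\alpha^{\ast})$ is fixed to be the transposed Hom-Poisson algebra dual to $(\delta,\Delta)$ via $[\cdot,\cdot]=\delta^{\ast}$ and $\circ=\Delta^{\ast}$, conditions $(2)$ and $(3)$ assert the same matched-pair/invariance data, so nothing new needs to be checked.

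For $(1)\Leftrightarrow(2)$ I would dualize throughout. The key dictionary is that $\langle\delta(x),a\otimes b\rangle=\langle x,[a,b]\rangle$ and $\langle\Delta(x),a\otimes b\rangle=\langle x,a\circ b\rangle$ for all $x\in A$ and $a,b\in A^{\ast}$, while the actions are defined by $\langle ad^{\ast}_{\{\cdot,\cdot\}}(x)a,y\rangle=\langle a,\{x,y\}\rangle$, $\langle S^{\ast}_{\cdot}(x)a,y\rangle=-\langle a,x\cdot y\rangle$, together with their $A^{\ast}$-analogues and the twists $\langle\alpha^{\ast}(a),y\rangle=\langle a,\alpha(y)\rangle$. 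Under this correspondence each defining condition of a transposed Hom-Poisson bialgebra becomes, after pairing against an arbitrary test element and transporting all maps to the $A^{\ast}$ side, one of the matched-pair identities. Concretely I would split the argument into three layers.

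First, I would show that $(A,\{\cdot,\cdot\},\delta,\alpha)$ is a Hom-Lie bialgebra if and only if the Lie parts form a matched pair of Hom-Lie algebras, i.e.\ that the $1$-cocycle condition on $\delta$ is equivalent to \eqref{Lie1}--\eqref{Lie2}; this is the Hom-analogue of the classical Drinfeld correspondence, obtained by pairing the cocycle identity against $a\otimes b\otimes c\in(A^{\ast})^{\otimes 3}$. Second, I would show that $(A,\cdot,\Delta,\alpha)$ is a commutative associative infinitesimal Hom-bialgebra if and only if the associative parts form a matched pair of commutative Hom-associative algebras, i.e.\ that the $1$-cocycle condition on $\Delta$ is equivalent to \eqref{comm1}--\eqref{comm4}, dualized in the same manner. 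Third, and this is the crucial new part, I would show that the two mixed compatibility conditions between $\delta$ and $\Delta$ in the definition of a transposed Hom-Poisson bialgebra are equivalent, after dualization, to the four mixed matched-pair conditions \eqref{101}--\eqref{104} linking the two representations.

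The main obstacle is precisely this third layer. Each of the two tensor identities relating $\delta$ and $\Delta$ must be paired against a test element of the appropriate $(A^{\ast})^{\otimes 2}$, expanded using the definitions above, and then reorganized; the bookkeeping is delicate because the dual of $S_{\cdot}$ carries a sign whereas $ad$ does not, and because the twisting maps $\alpha$ and $\alpha^{\ast}$ decorate different tensor factors in the two compatibility conditions, so one must track carefully where each $\alpha$ and $\alpha^{\ast}$ lands and how the flip $\tau$ interacts with the pairing. Once all terms are aligned, matching them against \eqref{101}--\eqref{104} is routine, and assembling the three layers yields $(1)\Leftrightarrow(2)$, completing the equivalence.
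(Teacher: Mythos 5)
First, a fact you could not have known but which frames this review: the paper contains \emph{no proof of this theorem at all} --- the statement is followed immediately by Section~\ref{sec4} --- so there is no argument of the authors' to compare yours against, and your proposal has to be judged on its own terms. On those terms, your skeleton is the standard and correct one for results of this type (Drinfeld's correspondence, its Hom-Lie version in \cite{ShengBai:homLiebialg}, and the transposed-Poisson bialgebra theory growing out of \cite{BaiBaiGuoWu2020:transpPoisNovPois3Lie}): the equivalence $(2)\Leftrightarrow(3)$ is exactly the preceding theorem of Section~\ref{sec3}, and $(1)\Leftrightarrow(2)$ is obtained by pairing each bialgebra axiom against test elements and transporting it through the dictionary $[\cdot,\cdot]=\delta^{\ast}$, $\circ=\Delta^{\ast}$, with the cocycle condition on $\delta$ matching \eqref{Lie1}--\eqref{Lie2}, the cocycle condition on $\Delta$ matching \eqref{comm1}--\eqref{comm4}, and the two mixed $\delta$-$\Delta$ compatibilities matching \eqref{101}--\eqref{104}. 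You are also right, implicitly, that the transposed Hom-Poisson \emph{coalgebra} axioms need no layer of their own: they are absorbed into the hypothesis that $\delta^{\ast},\Delta^{\ast}$ induce a transposed Hom-Poisson structure on $(A^{\ast},\alpha^{\ast})$.

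That said, two concrete gaps remain, and since the paper supplies nothing, they are not cosmetic. First, your third layer is deferred exactly where all the content lies: two mixed compatibility identities must yield the four conditions \eqref{101}--\eqref{104}, which requires pairing each tensor identity against $a\otimes b$ and against a test vector $y$ in the two inequivalent ways, and tracking the sign carried by $-S^{\ast}_{\cdot}$ and the asymmetric placement of $\alpha$ versus $\alpha^{\ast}$ (including through the flip $\tau$); until this computation is written out, the equivalence is a plausible claim rather than a proof --- and it is precisely here that a hidden obstruction, if any, would surface. Second, a Hom-specific issue you glide over: condition $(2)$ presupposes that $ad^{\ast}_{\{\cdot,\cdot\}}$ and $-S^{\ast}_{\cdot}$ (and their $A^{\ast}$-counterparts) genuinely form a representation of a transposed Hom-Poisson algebra, which in the twisted setting is \emph{not} automatic: the paper's own theorem on dual representations requires the supplementary hypotheses \eqref{isma1l.2}--\eqref{isma1ll.1}. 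A complete proof must either derive these from the bialgebra axioms or add them as standing assumptions. Finally, note that the paper's proof of the theorem you invoke for $(2)\Leftrightarrow(3)$ explicitly establishes only the direction from matched pair to Manin triple, so by citing it as a black box you inherit an unproved converse as well.
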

%%%%%%%%%%%%%%%%%%%%%%%%%%%%%%%%%%
\section{Hom-pre-Lie Poisson algebras}\label{sec4}
In this section, we provide definition of Hom-pre-Lie Poisson algebra, develop
some constructions theorems and define their bimodule and matched pair and describe some relevant properties.

\begin{defn}[\cite{ms:homstructure}]
A Hom-pre-Lie algebra is a triple $(A,\ast, \alpha)$ consisting of a vector space $A$, a bilinear product $\ast:A\otimes A\rightarrow A,$ and a linear map
$\alpha:A\rightarrow A$ satisfying, for all $x, y, z\in  A$,
\begin{equation}\label{klkl}
(x\ast y)\ast\alpha(z)-\alpha(x)\ast(y\ast z)=(y\ast
x)\ast\alpha(z)-\alpha(y)\ast(x\ast z).
\end{equation}
The equation \eqref{klkl} is called Hom-pre-Lie identity.
\end{defn}
\begin{lem}\label{lem1}
Let $(A,\ast,\alpha)$ be a Hom-pre-Lie algebra. Then,
$(A,[\cdot,\cdot],\alpha)$ is a Hom-Lie algebra with
$[x, y]=x\ast y-y\ast x,$
for any $x,y \in A$. We say that $(A,[\cdot,\cdot],\alpha)$ is the
sub-adjacent Lie algebra of $(A,\ast,\alpha)$ and denoted by $A^{c}$.
\end{lem}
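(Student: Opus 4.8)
The plan is to verify the two defining axioms of a Hom-Lie algebra for the bracket $[x,y]=x\ast y-y\ast x$. Skew-symmetry \eqref{eq_hom_skewsym_identity} is immediate, since $[x,y]=x\ast y-y\ast x=-(y\ast x-x\ast y)=-[y,x]$, so the only genuine work lies in establishing the Hom-Jacobi identity \eqref{eq_hom_jacobi_identity}.

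For the Hom-Jacobi identity, first I would record the Hom-pre-Lie identity \eqref{klkl} in the rearranged form
\begin{equation*}
(x\ast y)\ast\alpha(z)-(y\ast x)\ast\alpha(z)=\alpha(x)\ast(y\ast z)-\alpha(y)\ast(x\ast z),
\end{equation*}
which expresses precisely that the $\ast$-associator is symmetric in its first two arguments. Then I would expand each of the three cyclic terms $[\alpha(x),[y,z]]$, $[\alpha(y),[z,x]]$, $[\alpha(z),[x,y]]$ using the definition of the bracket; each produces four products, two of the \emph{left} type $\alpha(\cdot)\ast(\cdot\ast\cdot)$ and two of the \emph{right} type $(\cdot\ast\cdot)\ast\alpha(\cdot)$, for twelve terms in total.

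The key step is to invoke the rearranged identity to replace, in each cyclic term, the swapped pair of right-type products (which always appears in a shape such as $-(x\ast y)\ast\alpha(z)+(y\ast x)\ast\alpha(z)$) by the corresponding difference of left-type products; the two remaining cyclic terms require applying the identity with the relabellings $(x,y,z)\mapsto(y,z,x)$ and $(x,y,z)\mapsto(z,x,y)$. After these substitutions all twelve contributions are of the left type $\alpha(\cdot)\ast(\cdot\ast\cdot)$, and on matching them up one finds that each such product occurs exactly twice with opposite signs. Hence the cyclic sum $\circlearrowleft_{x,y,z}[\alpha(x),[y,z]]$ vanishes, which is the Hom-Jacobi identity, and $(A,[\cdot,\cdot],\alpha)$ is therefore a Hom-Lie algebra.

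I expect the only obstacle to be bookkeeping: there is no conceptual difficulty, but one must track the signs and the cyclic relabelling carefully so that the surviving products cancel in pairs. It is precisely the consistent application of the rearranged identity under the two relabellings above that guarantees this pairwise cancellation.
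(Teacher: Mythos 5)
Your proof is correct and takes essentially the same route as the paper: both verify skew-symmetry immediately and prove the Hom-Jacobi identity by expanding the cyclic sum into the twelve products and applying the Hom-pre-Lie identity \eqref{klkl} three times, once for each cyclic relabelling $(x,y,z)$, $(y,z,x)$, $(z,x,y)$. The only difference is bookkeeping --- the paper groups the twelve terms into three quadruples, each vanishing as an instance of \eqref{klkl}, whereas you first rewrite \eqref{klkl} as symmetry of the associator in its first two arguments, trade every right-type term $(\cdot\ast\cdot)\ast\alpha(\cdot)$ for left-type terms, and then cancel in pairs; algebraically this is the same computation.
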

\begin{proof}
First, we prove $[\cdot,\cdot]$ satisfies Hom-skewsymmetry,
$$[x,y]=x\ast y-y\ast x=-(y\ast x-x\ast y)=-[y,x].$$
Now, we prove the Hom-Jacobi-identity,
\begin{align*}
&[\alpha(x),[y,z]]+[\alpha(y),[z,x]]+[\alpha(z),[x,y]]\\
&\quad =[\alpha(x),y\ast z-z\ast y]+[\alpha(y),z\ast x-x\ast
z]+[\alpha(z),x\ast y-y\ast x]\\
&\quad =\alpha(x)\ast(y\ast z)-\alpha(x)\ast(z\ast y)-(y\ast z)\ast \alpha(x)\\
&\quad\quad +(z\ast y)\ast\alpha(x)+\alpha(y)\ast(z\ast x)-\alpha(y)\ast(x\ast z)\\
&\quad\quad -(z\ast x)\ast\alpha(y)+(x\ast z)\ast\alpha(y)+\alpha(z)\ast(x\ast y)\\
&\quad\quad -\alpha(z)\ast(y\ast x)-(x\ast y)\ast\alpha(z)+(y\ast x)\ast\alpha(z)\\
&\quad=(y\ast x)\ast\alpha(z)-\alpha(y)\ast(x\ast z)-((x\ast y)\ast\alpha(z)-\alpha(x)\ast(y\ast z))\\
&\quad\quad +(z\ast y)\ast\alpha(x)-\alpha(z)\ast(y\ast x)
-((y\ast z)\ast\alpha(x)-\alpha(z)\ast(y\ast x))\\
&\quad\quad +(x\ast z)\ast\alpha(y)-\alpha(x)\ast(z\ast y)
-((z\ast x)\ast\alpha(y)-\alpha(z)\ast(x\ast y))\\
&\quad =0+0+0=0.
\qedhere
\end{align*}
\end{proof}
Let us recall now the notion of bimodule and matched pair of a Hom-pre-Lie algebra.
\begin{defn}[\cite{SunLi2017:parakahlerhomhomleftsymetric}]
Let $(A, \ast, \alpha)$ be a Hom-pre-Lie algebra, and $(V, \beta)$ be a Hom-module. Let $ l_\ast, r_\ast: A \rightarrow gl(V) $ be two linear maps. The quadruple $(l_\ast, r_\ast, \beta, V)$ is called a bimodule of $A$ if for all $ x, y \in  A, v \in V $,
\begin{eqnarray}
l_\ast(\{x,y\})\beta(v)&=&
l_\ast(\alpha(x))l_\ast(y)v-l_\ast(\alpha(y))l_\ast(x)v\\
r_\ast(\alpha(y))\rho(x)v&=&l_\ast(\alpha(x))r_\ast(y)-r_\ast(x\ast y)\beta(v)\\
\beta(l_\ast(x)v)&=& l_\ast(\alpha(x))\beta(v),\\ \beta(r_\ast(x)v)&=& r_\ast(\alpha(x))\beta(v),
\end{eqnarray}
where $\{x,y\}=x\ast y-y\ast x$ and $\rho=l_\ast-r_\ast.$
\end{defn}
\begin{prop}[\cite{SunLi2017:parakahlerhomhomleftsymetric}] \label{bimodpre-Lie}
Let $(l_\ast, r_\ast, \beta, V)$ be a bimodule of a Hom-pre-Lie algebra
$(A, \ast, \alpha)$. Then, the direct sum $A \oplus V$ of vector spaces is turned into a
Hom-pre-Lie algebra  by defining multiplication in $A \oplus V $ for all $ x_{1}, x_{2} \in  A, v_{1}, v_{2} \in V$ by
\begin{eqnarray*}
(x_{1} + v_{1}) \ast' (x_{2} + v_{2}) & = & x_{1} \ast x_{2} + (l_\ast(x_{1})v_{2} + r_\ast(x_{2})v_{1}),\cr
(\alpha\oplus\beta)(x_{1} + v_{1}) & = & \alpha(x_{1}) + \beta(v_{1}).
\end{eqnarray*}
\end{prop}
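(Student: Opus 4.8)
The plan is to verify directly that $(A\oplus V,\ast',\alpha\oplus\beta)$ satisfies the Hom-pre-Lie identity \eqref{klkl}, which is the sole axiom in the definition of a Hom-pre-Lie algebra; in particular no multiplicativity of $\alpha\oplus\beta$ needs to be checked. Writing generic elements as $X_i=x_i+v_i$ with $x_i\in A$ and $v_i\in V$ for $i=1,2,3$, I would expand
$$\big(X_1\ast' X_2\big)\ast'(\alpha\oplus\beta)(X_3)-(\alpha\oplus\beta)(X_1)\ast'\big(X_2\ast' X_3\big)$$
using the definition of $\ast'$, and then confirm that the result is invariant under the exchange $X_1\leftrightarrow X_2$.

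Each product splits into an $A$-component and a $V$-component, which I would treat separately. The $A$-component of the identity is precisely the Hom-pre-Lie identity \eqref{klkl} for $(A,\ast,\alpha)$, so it holds by hypothesis and requires no further work. The content therefore lies entirely in the $V$-component. After expansion, the $V$-part of the left-hand side consists of the terms $l_\ast(x_1\ast x_2)\beta(v_3)$, $r_\ast(\alpha(x_3))l_\ast(x_1)v_2$ and $r_\ast(\alpha(x_3))r_\ast(x_2)v_1$, minus $l_\ast(\alpha(x_1))l_\ast(x_2)v_3$, $l_\ast(\alpha(x_1))r_\ast(x_3)v_2$ and $r_\ast(x_2\ast x_3)\beta(v_1)$; the right-hand side is the same expression with $x_1,v_1$ and $x_2,v_2$ interchanged.

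The decisive step is to regroup all surviving terms according to which of $v_1,v_2,v_3$ they act on. The $v_3$-block reduces to
$$l_\ast(\{x_1,x_2\})\beta(v_3)-\big(l_\ast(\alpha(x_1))l_\ast(x_2)-l_\ast(\alpha(x_2))l_\ast(x_1)\big)v_3,$$
which vanishes by the first bimodule axiom. For the $v_1$- and $v_2$-blocks the key manoeuvre is to combine $r_\ast(\alpha(x_3))l_\ast(x_i)-r_\ast(\alpha(x_3))r_\ast(x_i)$ into $r_\ast(\alpha(x_3))\rho(x_i)$, using $\rho=l_\ast-r_\ast$; each resulting block then coincides, after the substitution $x=x_i$, $y=x_3$, with the second bimodule axiom
$$r_\ast(\alpha(y))\rho(x)v=l_\ast(\alpha(x))r_\ast(y)v-r_\ast(x\ast y)\beta(v)$$
and hence cancels. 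I expect the main obstacle to be purely organizational: keeping the twelve terms and their signs straight, and recognizing precisely when to introduce $\rho=l_\ast-r_\ast$. Notably, the two $\beta$-intertwining conditions in the definition of a bimodule are not needed to establish \eqref{klkl}; only the first two bimodule axioms enter the computation.
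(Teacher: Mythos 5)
Your proof is correct. Note first that the paper itself gives no proof of this proposition: it is quoted verbatim from Sun--Li \cite{SunLi2017:parakahlerhomhomleftsymetric}, so there is no internal argument to compare against, and your direct verification genuinely fills in what the paper leaves to the reference. Your expansion is accurate: writing $E(X_1,X_2,X_3)=(X_1\ast' X_2)\ast'(\alpha\oplus\beta)(X_3)-(\alpha\oplus\beta)(X_1)\ast'(X_2\ast' X_3)$, the $A$-component of $E(X_1,X_2,X_3)-E(X_2,X_1,X_3)$ is the Hom-pre-Lie identity in $A$, the $v_3$-block cancels by the first bimodule axiom applied to $\{x_1,x_2\}=x_1\ast x_2-x_2\ast x_1$, and each of the $v_1$- and $v_2$-blocks cancels by the second axiom
$r_\ast(\alpha(y))\rho(x)v=l_\ast(\alpha(x))r_\ast(y)v-r_\ast(x\ast y)\beta(v)$
with $(x,y)=(x_2,x_3)$ and $(x,y)=(x_1,x_3)$ respectively, exactly as you describe. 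Two of your side remarks are also correct and worth keeping: since the paper's Definition of a Hom-pre-Lie algebra (following \cite{ms:homstructure}) imposes only the identity \eqref{klkl} and no multiplicativity of the twisting map, nothing needs to be checked about $\alpha\oplus\beta$ beyond linearity; and the two $\beta$-intertwining conditions of the bimodule indeed never enter this computation --- they are needed elsewhere (for instance in Proposition \ref{propa1}, where $\beta(l_\ast(x)v)=l_\ast(\alpha(x))\beta(v)$ and its $r_\ast$-analogue are used), but not for \eqref{klkl} on $A\oplus V$.
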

We denote such a Hom-pre-Lie algebra by $(A\oplus V, \ast', \alpha+ \beta),$
or $A\times_{l_\ast, r_\ast, \alpha, \beta} V.$

\begin{prop}\label{propa1}
Let $( l_{\ast},r_{\ast}, \beta, V)$ be a bimodule of a  Hom-pre-Lie algebra
$(A,\ast, \alpha)$. Let $(A, \{\cdot,\cdot\}, \alpha)$ be the
subadjacent Hom-Lie algebra of $(A, \ast, \alpha)$. Then, $(l_\ast-r_\ast,\beta,V)$ is a
representation of Hom-Lie algebra $(A,\{\cdot,\cdot\},\alpha)$.
\end{prop}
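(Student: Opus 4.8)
The plan is to check the two defining identities \eqref{repLie1} and \eqref{repLie3} of a Hom-Lie representation for the single map $\rho = l_\ast - r_\ast$ on the subadjacent algebra $(A,\{\cdot,\cdot\},\alpha)$, where $\{x,y\}=x\ast y-y\ast x$ is the bracket from Lemma \ref{lem1}. The compatibility condition \eqref{repLie3} is immediate: since $\rho=l_\ast-r_\ast$, applying the two Hom-module conditions $\beta(l_\ast(x)v)=l_\ast(\alpha(x))\beta(v)$ and $\beta(r_\ast(x)v)=r_\ast(\alpha(x))\beta(v)$ of the bimodule termwise gives $\beta(\rho(x)v)=l_\ast(\alpha(x))\beta(v)-r_\ast(\alpha(x))\beta(v)=\rho(\alpha(x))\beta(v)$.

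The real content is \eqref{repLie1}, that is $\rho(\{x,y\})\beta(v)=\rho(\alpha(x))\rho(y)v-\rho(\alpha(y))\rho(x)v$. I would expand the left side as $l_\ast(\{x,y\})\beta(v)-r_\ast(\{x,y\})\beta(v)$ and treat the two pieces with different bimodule axioms. The $l_\ast$ piece is handled directly by the first bimodule identity, yielding $l_\ast(\alpha(x))l_\ast(y)v-l_\ast(\alpha(y))l_\ast(x)v$. For the $r_\ast$ piece I would write $\{x,y\}=x\ast y-y\ast x$ and use the second bimodule axiom, rearranged to $r_\ast(x\ast y)\beta(v)=l_\ast(\alpha(x))r_\ast(y)v-r_\ast(\alpha(y))\rho(x)v$, once as stated and once with $x$ and $y$ interchanged.

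Collecting the six resulting terms is the only substantive, though purely mechanical, step. Grouping by the outer operator, $l_\ast(\alpha(x))l_\ast(y)v-l_\ast(\alpha(x))r_\ast(y)v=l_\ast(\alpha(x))\rho(y)v$, and pairing this with the surviving term $-r_\ast(\alpha(x))\rho(y)v$ reassembles $\rho(\alpha(x))\rho(y)v$; the symmetric grouping on the $\alpha(y)$ side produces $-\rho(\alpha(y))\rho(x)v$, so the two halves combine into exactly the right-hand side of \eqref{repLie1}. The main thing to watch is the bookkeeping of signs across the two interchanged applications of the second axiom and the consistent re-fusing of $l_\ast-r_\ast$ into $\rho$ at each stage; I expect no structural obstacle once these are tracked carefully.
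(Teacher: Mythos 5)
Your proposal is correct and is essentially the paper's own argument: the paper verifies \eqref{repLie3} termwise from the two Hom-module conditions exactly as you do, and for \eqref{repLie1} it uses the same three ingredients — the first bimodule axiom for the $l_\ast$ part and the second bimodule axiom twice (once with $x$ and $y$ interchanged) for the $r_\ast$ part. The only difference is the direction of the computation: the paper expands $\rho(\alpha(x))\rho(y)v-\rho(\alpha(y))\rho(x)v$ into eight terms and regroups them to recognize the bimodule identities, whereas you expand $\rho(\{x,y\})\beta(v)$ and reassemble toward the commutator side; these are the same chain of equalities read in opposite order.
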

\begin{proof}
For all $x,y\in A,~v\in V$,
\begin{align*}
\beta((l_{\ast}-r_{\ast})(x)v)=&\beta(l(x)v)-\beta(r(x)v)\\
=&l_{\ast}(\alpha(x))\beta(v)-r_{\ast}(\alpha(x))\beta(v)=(l_{\ast}-r_{\ast})(\alpha(x))\beta(v),
\end{align*}
and also
\begin{align*}
&(l_{\ast}-r_{\ast})(\alpha(x))\circ(l_{\ast}-r_{\ast})(y)v-(l_{\ast}-r_{\ast})(\alpha(y))\circ(l_{\ast}-r_{\ast})(x)v\\
&\quad =l_{\ast}(\alpha(x))l_{\ast}(y)v-r_{\ast}(\alpha(x))l_{\ast}(y)v-l_{\ast}(\alpha(x))r_{\ast}(y)v+r_{\ast}(\alpha(x))r_{\ast}(y)v\\
&\quad \quad -l_{\ast}(\alpha(y))l_{\ast}(x)v+l_{\ast}(\alpha(y))r_{\ast}(x)v+r_{\ast}(\alpha(y))l_{\ast}(x)v-r_{\ast}(\alpha(y))r_{\ast}(x)v\\
&\quad =\Big(l_{\ast}(\alpha(x))l_{\ast}(y)v-l_{\ast}(\alpha(y))l_{\ast}(x)v\Big)\\
&\quad\quad +\Big(r_{\ast}(\alpha(y))l_{\ast}(x)v-l_{\ast}(\alpha(x))r_{\ast}(y)v-r_{\ast}(\alpha(y))r_{\ast}(x)v\Big)\\
&\quad\quad -\Big(r_{\ast}(\alpha(x))l_{\ast}(y)v-r_{\ast}(\alpha(x))r_{\ast}(y)v-l_{\ast}(\alpha(y))r_{\ast}(x)v\Big)\\
&\quad =l_{\ast}(\{x,y\})\beta(v)-r_{\ast}(x\ast y)\beta(v)+r_{\ast}(y\ast x)\beta(v)\\
&\quad =l_{\ast}(\{x,y\})\beta(v)-r_{\ast}(\{x,y\})\beta(v)=\rho(\{x,y\})\beta(v).
\end{align*}
Therefore, \eqref{repLie1} and \eqref{repLie3} are satisfied.
\end{proof}
\begin{thm}[\cite{SunLi2017:parakahlerhomhomleftsymetric}]
Let $(A,\ast_A,\alpha)$ and $(B,\ast_{B},\beta)$ be two  Hom-pre-Lie algebras. Suppose that there are linear maps $l_{\ast_A},r_{\ast_A}:A\rightarrow gl(B)$
and $l_{\ast_B},r_{\ast_B}:B\rightarrow gl(A)$ such that $(l_{\ast_A},r_{\ast_A},\beta,B)$ is a bimodule of $A$ and $(l_{\ast_B},r_{\ast_B},\alpha,A)$ is a bimodule of $B$ satisfying for any $x,y\in A,~a,b\in B$ and for
\begin{align*}
\{x,y\}_A=x\ast y-y\ast x, \quad \rho_A=l_{\ast_A}-r_{\ast_A},\\
\{a,b\}_B=a\ast b-b\ast a, \quad \rho_B=l_{\ast_B}-r_{\ast_B},
\end{align*}
the following properties:
\begin{align}
\begin{split}
r_{\ast_A}(\alpha(x))\{a,b\}_B&=r_{\ast_A}(r_{\ast_B}(b)x)\beta(a)-r_{\ast_A}(l_{\ast_B}(a)x)\beta(b)\\
&+\beta(a)\ast_Br_{\ast_A}(x)b)-\beta(b)\ast_B (r_{\ast_A}(x)a),
\end{split}
\label{Lie11}\\
\begin{split}
l_{\ast_A}(\alpha(x))(a\ast_B b)&=(\rho_A(x)a)\ast_B\beta(b)-l_{\ast_A}(\rho_B(a)x)\beta(b)\\
&+\beta(a)\ast_B(l_{\ast_A}(x)b)+r_{\ast_A}(r_{\ast_B}(b)x)\beta(a),
\end{split}
\label{Lie12}\\
\begin{split}
r_{\ast_B}(\beta(a))\{x,y\}_B&=r_{\ast_B}(r_{\ast_A}(y)xa)\alpha(x)-r_{\ast_B}(l_{\ast_A}(x)a)\alpha(y)\\
&+\alpha(x)\ast_A r_{\ast_B}(a)y)-\alpha(y)\ast_A (r_{\ast_B}(a)x),
\end{split}
\label{Lie13}\\
\begin{split}
l_{\ast_B}(\beta(a))(x\ast_A y)&=(\rho_B(a)x)\ast_A\alpha(y)-l_{\ast_B}(\rho_A(x)a)\alpha(y)\\
&+\alpha(x)\ast_A(l_{\ast_B}(a)y)+r_{\ast_B}(r_{\ast_A}(y)a)\alpha(x).
\end{split}
\label{Lie14}
\end{align}
Then, $(A,B,l_{\ast_A},r_{\ast_A},\beta,l_{\ast_B},r_{\ast_B},\alpha)$ is called a matched pair of
Hom-pre-Lie algebras. In this case, there exists a Hom-pre-Lie algebra structure on the vector
space $A\oplus B$ of the underlying vector spaces of $A$ and $B$ given by
\begin{align*}
(x + a) \ast (y + b)&=x \ast_A y + (l_{\ast_A}(x)b + r_{\ast_A}(y)a)+a \ast_B b + (l_{\ast_B}(a)y + r_{\ast_B}(b)x), \\
(\alpha\oplus\beta)(x + a)&=\alpha(x) + \beta(a).
\end{align*}
\end{thm}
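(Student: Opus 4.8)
The plan is to verify directly that the product $\ast$ together with the linear map $\alpha\oplus\beta$ satisfies the Hom-pre-Lie identity \eqref{klkl} on $A\oplus B$. It is convenient to rephrase \eqref{klkl} as the assertion that the associator
$$\mathrm{as}_\ast(X,Y,Z)=(X\ast Y)\ast(\alpha\oplus\beta)(Z)-(\alpha\oplus\beta)(X)\ast(Y\ast Z)$$
is symmetric in its first two arguments, i.e. $\mathrm{as}_\ast(X,Y,Z)=\mathrm{as}_\ast(Y,X,Z)$ for all $X,Y,Z\in A\oplus B$.

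Since $\mathrm{as}_\ast$ is linear in each slot, it suffices to establish this symmetry when each of $X,Y,Z$ lies entirely in $A$ or entirely in $B$: writing $X=x+a$, $Y=y+b$, $Z=z+c$ and expanding by multilinearity reduces the general identity to these pure cases. When all three arguments lie in $A$ (respectively, in $B$), the desired symmetry is precisely the Hom-pre-Lie identity for $(A,\ast_A,\alpha)$ (respectively, $(B,\ast_B,\beta)$), which holds by hypothesis. Thus only the mixed cases remain to be checked.

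First I would sort the mixed cases according to how many arguments lie in each factor and where the third argument sits. Substituting the definition of $\ast$, each associator splits into an $A$-valued part and a $B$-valued part, which I would treat separately. Using the Hom-module conditions $\beta(l_{\ast_A}(x)v)=l_{\ast_A}(\alpha(x))\beta(v)$, $\beta(r_{\ast_A}(x)v)=r_{\ast_A}(\alpha(x))\beta(v)$ (and their counterparts for $l_{\ast_B},r_{\ast_B}$) together with the first two bimodule axioms, the configurations with two arguments in $A$ and one in $B$ collapse to the compatibility relations \eqref{Lie13} and \eqref{Lie14}, while those with one argument in $A$ and two in $B$ collapse symmetrically to \eqref{Lie11} and \eqref{Lie12}. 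In every case the $A$-component and the $B$-component of $\mathrm{as}_\ast(X,Y,Z)-\mathrm{as}_\ast(Y,X,Z)$ are shown to vanish independently, one component consuming one of the pairs \eqref{Lie11}--\eqref{Lie12} or \eqref{Lie13}--\eqref{Lie14} and the Hom-pre-Lie identity of the relevant factor.

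The main obstacle is the sheer bookkeeping: each mixed associator expands into many terms involving $l_{\ast_A},r_{\ast_A},l_{\ast_B},r_{\ast_B}$ evaluated on $\alpha$- and $\beta$-images and the two products $\ast_A,\ast_B$, and each term must be routed to the correct bimodule axiom and to the right compatibility condition among \eqref{Lie11}--\eqref{Lie14}. No individual step is deep; the care lies in keeping the $A$- and $B$-valued parts separate and in handling the brackets $\{x,y\}_A=x\ast_A y-y\ast_A x$ and $\{a,b\}_B=a\ast_B b-b\ast_B a$ through $\rho_A=l_{\ast_A}-r_{\ast_A}$ and $\rho_B=l_{\ast_B}-r_{\ast_B}$, so that the bracket terms appearing in \eqref{Lie11}--\eqref{Lie14} match those produced by the associator. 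Once all mixed cases close, the Hom-pre-Lie identity holds on $A\oplus B$, which is what we wanted to prove.
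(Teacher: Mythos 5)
Your plan is sound, and there is in fact nothing in the paper to compare it against: this theorem is quoted from \cite{SunLi2017:parakahlerhomhomleftsymetric} and stated without proof, so the direct verification you outline is exactly what has to be done. Your reduction is correct: the map $(X,Y,Z)\mapsto \mathrm{as}_\ast(X,Y,Z)-\mathrm{as}_\ast(Y,X,Z)$ is trilinear, so the eight pure cases suffice; the all-$A$ and all-$B$ cases are the Hom-pre-Lie identities of the factors (the product $\ast$ restricts to $\ast_A$ on $A$ and to $\ast_B$ on $B$); and in each mixed case the $A$-valued and $B$-valued components must vanish separately. Your routing of the cases is also the right one. Concretely, for $x,y\in A$ and $c\in B$, the $B$-component of $\mathrm{as}_\ast(x,y,c)=\mathrm{as}_\ast(y,x,c)$ is the bimodule axiom $l_{\ast_A}(\{x,y\}_A)\beta(c)=l_{\ast_A}(\alpha(x))l_{\ast_A}(y)c-l_{\ast_A}(\alpha(y))l_{\ast_A}(x)c$ and its $A$-component is \eqref{Lie13}; the $B$-component of $\mathrm{as}_\ast(x,c,y)=\mathrm{as}_\ast(c,x,y)$ is the bimodule axiom $r_{\ast_A}(\alpha(y))\rho_A(x)c=l_{\ast_A}(\alpha(x))r_{\ast_A}(y)c-r_{\ast_A}(x\ast_A y)\beta(c)$ and its $A$-component is \eqref{Lie14}; the cases with two arguments in $B$ are dual and consume the bimodule axioms of $(l_{\ast_B},r_{\ast_B},\alpha,A)$ together with \eqref{Lie11}--\eqref{Lie12}. (Incidentally, the Hom-module conditions $\beta\circ l_{\ast_A}(x)=l_{\ast_A}(\alpha(x))\circ\beta$ and its analogues, which you list among your tools, are never actually needed in this verification.)

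One warning for when you do the bookkeeping: the expansion reproduces \eqref{Lie12} and \eqref{Lie14} verbatim, but what it produces in place of \eqref{Lie11} has $r_{\ast_A}(l_{\ast_B}(b)x)\beta(a)$ as the first right-hand term where the paper prints $r_{\ast_A}(r_{\ast_B}(b)x)\beta(a)$, and the garbled first term of \eqref{Lie13} must read $r_{\ast_B}(l_{\ast_A}(y)a)\alpha(x)$ (its left-hand side should also be $\{x,y\}_A$, not $\{x,y\}_B$). The printed versions cannot be the intended hypotheses: the left-hand side of \eqref{Lie11} is skew-symmetric in $a,b$, the corrected right-hand side is skew-symmetric as well, but the printed one is not, so as stated \eqref{Lie11} would impose extra constraints beyond what the construction needs. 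These are transcription typos inherited from the cited source; your argument closes against the corrected forms, so record those rather than trying to force your computation to match the printed formulas.
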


We denote this Hom-pre-Lie algebra by $A\bowtie^{l_{\ast_A},r_{\ast_A},\beta}_{l_{\ast_B},r_{\ast_B},\alpha}B$.
\begin{prop}\label{Matchedd1}
Let $(A, B, l_{\ast_{A}}, r_{\ast_{A}}, \beta,
l_{\ast_{B}}, r_{\ast_{B}}, \alpha) $ be a matched pair of Hom-pre-Lie algebras $(A,\ast_A, \alpha)$ and $(B, \ast_B,\beta)$.
Then, $(A, B, l_{\ast_{A}}-r_{\ast_{A}},
\beta ,l_{\ast_{B}}-r_{\ast_{B}},\alpha)$ is a matched pair of the associated
Hom-Lie algebras $(A,\{\cdot,\cdot\}_A, \alpha)$ and $(B,\{\cdot,\cdot\}_B,\beta).$
\end{prop}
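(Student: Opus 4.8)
The plan is to bypass a head-on verification of the two Hom-Lie compatibility identities \eqref{Lie1}--\eqref{Lie2} and instead exploit the fact that passing to commutators is compatible with the matched-pair construction. First I would settle the representation requirements. Applying Proposition \ref{propa1} to the bimodule $(l_{\ast_A},r_{\ast_A},\beta,B)$ of $(A,\ast_A,\alpha)$ shows that $\rho_A=l_{\ast_A}-r_{\ast_A}$ is a representation of the sub-adjacent Hom-Lie algebra $A^{c}=(A,\{\cdot,\cdot\}_A,\alpha)$ on $(B,\beta)$; symmetrically, $\rho_B=l_{\ast_B}-r_{\ast_B}$ is a representation of $B^{c}=(B,\{\cdot,\cdot\}_B,\beta)$ on $(A,\alpha)$. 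This already provides the two representations demanded in the definition of a matched pair of Hom-Lie algebras (Theorem \ref{matched Lie}).

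For the compatibility part, I would invoke the Hom-pre-Lie matched pair at hand: by construction the space $A\oplus B$ carries the Hom-pre-Lie product of $A\bowtie^{l_{\ast_A},r_{\ast_A},\beta}_{l_{\ast_B},r_{\ast_B},\alpha}B$, and by Lemma \ref{lem1} its commutator $[u,v]=u\ast v-v\ast u$ makes $(A\oplus B,[\cdot,\cdot],\alpha\oplus\beta)$ a Hom-Lie algebra. The decisive step is to expand this commutator on $u=x+a$ and $v=y+b$; separating the $A$-valued from the $B$-valued terms and replacing each difference $l_{\ast}-r_{\ast}$ by the corresponding $\rho$ yields
\begin{align*}
[x+a,y+b]&=\{x,y\}_A+\rho_A(x)b-\rho_A(y)a\\
&\quad +\{a,b\}_B+\rho_B(a)y-\rho_B(b)x.
\end{align*}
This is exactly the bracket that Theorem \ref{matched Lie} assigns to the data $(A^{c},B^{c},\rho_A,\beta,\rho_B,\alpha)$.

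It then remains to conclude that this data is a matched pair. Since the bracket just displayed is the commutator of a Hom-pre-Lie product, it satisfies the Hom-Jacobi identity; evaluating that identity on the mixed triples $(x,y,a)$ with $x,y\in A,\ a\in B$ and $(x,a,b)$ with $x\in A,\ a,b\in B$, and again sorting the outcome by the summand it lands in, produces --- over and above the representation identities already obtained from Proposition \ref{propa1} --- precisely the compatibility conditions \eqref{Lie1} and \eqref{Lie2}. Hence $(A,B,\rho_A,\beta,\rho_B,\alpha)$ meets every clause of Theorem \ref{matched Lie} and is a matched pair of the associated Hom-Lie algebras. I expect the only real obstacle to be bookkeeping: the alternative, fully direct route substitutes $\rho_A=l_{\ast_A}-r_{\ast_A}$ and $\rho_B=l_{\ast_B}-r_{\ast_B}$ straight into \eqref{Lie1}--\eqref{Lie2}, rewrites every bracket through $\{x,y\}=x\ast y-y\ast x$, and reduces the claim to \eqref{Lie11}--\eqref{Lie14}; there the subtle point is making the composite $r$-type terms cancel in pairs while keeping track of signs.
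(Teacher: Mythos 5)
Your proposal is correct, and its first half coincides with the paper's own argument: both invoke Proposition \ref{propa1} to conclude that $\rho_A=l_{\ast_A}-r_{\ast_A}$ and $\rho_B=l_{\ast_B}-r_{\ast_B}$ are representations of the sub-adjacent Hom-Lie algebras. Where you genuinely diverge is in how the compatibility conditions \eqref{Lie1}--\eqref{Lie2} are established. The paper takes the route you describe only as an afterthought: it asserts that, given the representation properties, \eqref{Lie1} is equivalent to \eqref{Lie11}--\eqref{Lie12} and \eqref{Lie2} to \eqref{Lie13}--\eqref{Lie14}, leaving the term-by-term substitution $\{x,y\}=x\ast y-y\ast x$, $\rho=l_\ast-r_\ast$ and the sign bookkeeping to the reader. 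You instead argue structurally: the Hom-pre-Lie matched pair gives a Hom-pre-Lie product on $A\oplus B$; Lemma \ref{lem1} makes its commutator a Hom-Lie bracket; your expansion correctly shows this commutator is exactly the bracket that Theorem \ref{matched Lie} attaches to $(\rho_A,\beta,\rho_B,\alpha)$; and the Hom-Jacobi identity on the mixed triples $(x,y,a)$ and $(x,a,b)$, split into $A$- and $B$-components via the direct-sum decomposition, yields \eqref{Lie1} and \eqref{Lie2} (the remaining components reproduce the representation identities, so nothing is missing). This is a legitimate converse-type use of the semidirect construction and is not circular, since you extract the conditions from the Jacobi identity rather than citing Theorem \ref{matched Lie} in the forbidden direction. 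What your route buys is that the composite-operator identities \eqref{Lie11}--\eqref{Lie14} never have to be manipulated individually, making the cancellation of $r$-type terms automatic; what the paper's formulation buys (at least as stated) is the two-way equivalence between the pre-Lie-level and Lie-level compatibility conditions, which is slightly stronger than the one-way implication your argument delivers, though only the one-way implication is needed for the proposition.
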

\begin{proof}
Let $(A, B, l_{\ast_{A}}, r_{\ast_{A}}, \beta,
l_{\ast_{B}}, r_{\ast_{B}}, \alpha)$ be a matched pair of Hom-pre-Lie algebras $(A,\ast_A, \alpha)$ and $(B, \ast_B,\beta)$. By Proposition \ref{propa1}, the linear maps $l_{\ast_{A}}-r_{\ast_{A}}:A\rightarrow gl(B)$ and  $l_{\ast_{B}}-r_{\ast_{B}}:B\rightarrow gl(A)$ are representations of the underlying Hom-Lie algebras $(A,\{\cdot,\cdot\}_A, \alpha)$ and $(B,\{\cdot,\cdot\}_B,\beta)$, respectively. Therefore, \eqref{Lie1} is equivalent to \eqref{Lie11}-\eqref{Lie12}, and \eqref{Lie2} is equivalent to \eqref{Lie13}-\eqref{Lie14}.
\end{proof}

Next, we define Hom-pre-Lie Poisson algebras and study their properties.
\begin{defn}\label{def pre-poiss}
A Hom-pre-Lie Poisson algebra is a quadruple
$(A,\cdot,\ast,\alpha)$ such that $(A,\cdot,\alpha)$ is a
commutative Hom-associative algebra and $(A,\ast,\alpha)$ is a Hom-pre-Lie algebra
satisfying for all $x,y,z\in A$, the following compatibility conditions:
\begin{eqnarray}
    \label{eq:pre-Poisson 1} (x\cdot y)\ast \alpha(z)&=&\alpha(x)\cdot(y\ast z),\\
    \label{eq:pre-Poisson 2} (x\ast y)\cdot\alpha(z)-(y\ast x)\cdot\alpha(z)&=&\alpha(x)\ast(y\cdot z)-\alpha(y)\ast(x\cdot z).
  \end{eqnarray}
\end{defn}
\begin{ex}
	Consider a two-dimensional  $\mathbb{K}$-linear space $A$ with basis $\{ e_1, e_2\}$.
		Then, $(A,\cdot,\ast,\alpha)$ is a Hom-pre-Lie Poisson algebra, where $$e_1\cdot e_1= ae_2,~
	 e_1\ast e_1=e_1,~e_1\ast e_2=e_2,~\alpha(e_1)=2e_1,~\alpha(e_2)=2e_2,~~a\in\mathbb{K}.$$
		\end{ex}
\begin{defn}
Morphism $f:(A, \cdot, \ast , \alpha)\rightarrow (A', \cdot', \ast ', \alpha')$ of Hom-pre-Lie Poisson algebras are linear maps
$f:A\rightarrow A'$ satisfying for all $x, y\in A$,
\begin{align*}
f(x\cdot y)=f(x)\cdot ' f(y),\quad
f(x\ast y)=f(x)\ast ' f(y),\quad
f\circ \alpha =\alpha '\circ f,
\end{align*}
which can be illustrated by the following standard commutative diagrams:
  $$
\xymatrix{
A\otimes A \ar[d]_{f\times f }\ar[rr]^{\cdot}
               && A  \ar[d]^{f}  \\
A'\otimes A' \ar[rr]^{\cdot'}
               && A' }\quad \xymatrix{
A\otimes A \ar[d]_{f\times f }\ar[rr]^{\ast}
               && A  \ar[d]^{f}  \\
A'\otimes A' \ar[rr]^{\ast '}
               && A' }\quad \xymatrix{
A \ar[d]_{ f }\ar[rr]^{\alpha}
              && A  \ar[d]^{f}  \\
A' \ar[rr]^{\alpha'}
             && A'}
$$
\end{defn}
\begin{thm} \label{Yaupre-poiss}
Let $\mathcal{A}=(A, \cdot ,\ast)$ be a pre-Lie Poisson algebra and
$\alpha :\mathcal{A}\rightarrow \mathcal{A}$ be a pre-Lie Poisson algebras
morphism. Define $\cdot_{\alpha}: A
\times A\rightarrow A$ and $\ast_\alpha: A
\times A\rightarrow A$ for all $x, y\in A$ by
$
x\cdot _{\alpha}y=\alpha (x\cdot y)$ and $x\ast_{\alpha}y=\alpha(x\ast y).
$
Then, $\mathcal{A}_\alpha=(A_\alpha=A, \cdot _{\alpha},\ast_\alpha, \alpha)$ is a Hom-pre-Lie Poisson algebra, called $\alpha$-twist or Yau twist of $\mathcal{A}$. Moreover, assume that $\mathcal{A}'=(A', \cdot',\ast')$ is another pre-Lie Poisson algebra, and
$\alpha':\mathcal{A}'\rightarrow \mathcal{A}'$ is a pre-Lie Poisson algebras morphism.
Let $f:\mathcal{A}\rightarrow \mathcal{A}'$ be a
pre-Lie Poisson algebras morphism satisfying $f\circ \alpha =\alpha
'\circ f$. Then, $f:\mathcal{A}_{\alpha }\rightarrow \mathcal{A}'_{\alpha}$ is a
Hom-pre-Lie Poisson algebras morphism.
\end{thm}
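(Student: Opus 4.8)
The plan is to exploit throughout that $\alpha$ is an algebra morphism for both $\cdot$ and $\ast$, which lets me pull $\alpha$ out of any product; taking the axioms of a pre-Lie Poisson algebra to be Definition \ref{def pre-poiss} in the case $\alpha=\mathrm{id}$, the effect is that each twisted identity for $\mathcal{A}_\alpha$ collapses to $\alpha^{2}$ applied to the corresponding untwisted identity for $\mathcal{A}$, and therefore follows at once from it.

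First I would verify that $(A,\cdot_\alpha,\alpha)$ is a commutative Hom-associative algebra. Commutativity is immediate from commutativity of $\cdot$. For Hom-associativity I would compute
\[
(x\cdot_\alpha y)\cdot_\alpha\alpha(z)=\alpha\bigl(\alpha(x\cdot y)\cdot\alpha(z)\bigr)=\alpha^{2}\bigl((x\cdot y)\cdot z\bigr),
\]
merging the inner product because $\alpha$ is a morphism, and likewise $\alpha(x)\cdot_\alpha(y\cdot_\alpha z)=\alpha^{2}\bigl(x\cdot(y\cdot z)\bigr)$; associativity of $\cdot$ then gives equality. Replacing $\cdot$ by $\ast$ in the same computation yields
\[
(x\ast_\alpha y)\ast_\alpha\alpha(z)-\alpha(x)\ast_\alpha(y\ast_\alpha z)=\alpha^{2}\bigl((x\ast y)\ast z-x\ast(y\ast z)\bigr),
\]
and symmetrically for the right-hand side, so the Hom-pre-Lie identity \eqref{klkl} for $\ast_\alpha$ is precisely $\alpha^{2}$ applied to the pre-Lie identity for $\ast$; hence $(A,\ast_\alpha,\alpha)$ is Hom-pre-Lie.

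Next I would check the two compatibility conditions. For \eqref{eq:pre-Poisson 1} I would show both sides equal $\alpha^{2}$ of the two sides of the untwisted relation $(x\cdot y)\ast z=x\cdot(y\ast z)$. For \eqref{eq:pre-Poisson 2}, the more involved one, I would expand each of its four terms, factor $\alpha$ out of every product, and recognise the outcome as $\alpha^{2}$ applied to $(x\ast y)\cdot z-(y\ast x)\cdot z=x\ast(y\cdot z)-y\ast(x\cdot z)$. Both relations then follow from the corresponding compatibility conditions of $\mathcal{A}$.

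Finally, for the morphism statement, $f\circ\alpha=\alpha'\circ f$ is assumed outright, and for the products I would compute $f(x\cdot_\alpha y)=f\bigl(\alpha(x\cdot y)\bigr)=\alpha'\bigl(f(x)\cdot'f(y)\bigr)=f(x)\cdot'_{\alpha'}f(y)$, using in turn $f\circ\alpha=\alpha'\circ f$ and that $f$ is a morphism for $\cdot$; the identical argument handles $\ast$. No step presents a genuine obstacle, the whole argument being the Yau-twist mechanism; the only place demanding care is the four-term bookkeeping in \eqref{eq:pre-Poisson 2}, where one must track the two pre-Lie products correctly before factoring out $\alpha^{2}$.
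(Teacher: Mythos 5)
Your proposal is correct and follows essentially the same route as the paper's proof: both rely on the Yau-twist mechanism of using the morphism property of $\alpha$ to reduce each twisted identity to (an application of $\alpha^{2}$ to) the corresponding untwisted pre-Lie Poisson identity, and the morphism claim is handled by the identical computation $f(x\cdot_{\alpha}y)=\alpha'\bigl(f(x)\cdot' f(y)\bigr)=f(x)\cdot'_{\alpha'}f(y)$. If anything, you are more thorough than the paper, which verifies only the compatibility condition \eqref{eq:pre-Poisson 1} and the morphism statement, leaving the remaining axioms as ``analogous''.
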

\begin{proof}
We shall only prove \eqref{eq:pre-Poisson 1} in $\mathcal{A}_{\alpha}$, as the other relation is proved analogously. For any $x, y, z \in A$,
\begin{align*}
(x\cdot_{\alpha}y)\ast_{\alpha}\alpha(z)=&\alpha(\alpha(x)\cdot\alpha(y))\ast\alpha^{2}(z)\\
=&(\alpha^{2}(x)\cdot\alpha^{2}(y))\ast\alpha^{2}(z)\\
=&\alpha^{3}(x)\cdot(\alpha^{2}(y)\ast\alpha^{2}(z))
\quad \mbox{(by \eqref{eq:pre-Poisson 1}~in~$\mathcal{A}$)}\\
=&\alpha(x)\cdot_{\alpha}(y\ast_{\alpha} z).
\end{align*}
The second assertion follows from
\begin{align*}
    f(x\cdot_{\alpha} y)=&f(\alpha(x)\cdot\alpha(y))
    =f(\alpha(x))\cdot' f(\alpha(y))
    =\alpha' f(x)\cdot'\alpha' f(y)
    =f(x)\cdot'_{\alpha'} f(y).
\end{align*}
Similarly, $  f(x\ast_{\alpha} y)=f(x)\ast'_{\alpha'} f(y)$.
\end{proof}
\begin{prop}\label{isma}
Let $\mathcal{A}=(A, \cdot ,\ast, \alpha)$ be a
Hom-pre-Lie Poisson algebra and $\alpha': \mathcal{A}\rightarrow \mathcal{A}$ be a morphism.
Then,  $(A, \alpha'\circ\cdot,
\alpha'\circ\ast,\alpha \circ \alpha')$ is a Hom-pre-Lie Poisson
algebra.
\end{prop}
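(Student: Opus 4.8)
The plan is to verify directly the defining axioms of a Hom-pre-Lie Poisson algebra for the twisted structure, exploiting that $\alpha'$ is a morphism. Write $\cdot' = \alpha'\circ\cdot$, $\ast' = \alpha'\circ\ast$ and $\bar\alpha = \alpha\circ\alpha'$ for the new data, so that $x\cdot' y = \alpha'(x\cdot y)$ and $x\ast' y = \alpha'(x\ast y)$. Being a morphism, $\alpha'$ satisfies $\alpha'(x\cdot y)=\alpha'(x)\cdot\alpha'(y)$, $\alpha'(x\ast y)=\alpha'(x)\ast\alpha'(y)$ and $\alpha'\circ\alpha=\alpha\circ\alpha'$. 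The single mechanism behind every verification is the same: each $\alpha'$ is pushed through $\cdot$ and $\ast$ by multiplicativity, while any solitary $\alpha$ is slid past $\alpha'$ via the commutation relation; after this rewriting each twisted axiom collapses to the corresponding original axiom evaluated at the shifted arguments $(\alpha')^{2}(x),(\alpha')^{2}(y),(\alpha')^{2}(z)$.

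First I would check that $(A,\cdot',\bar\alpha)$ is a commutative Hom-associative algebra. Commutativity of $\cdot'$ is immediate from that of $\cdot$. For Hom-associativity one computes $(x\cdot' y)\cdot'\bar\alpha(z)=\big((\alpha')^{2}(x)\cdot(\alpha')^{2}(y)\big)\cdot\alpha\big((\alpha')^{2}(z)\big)$ and, symmetrically, $\bar\alpha(x)\cdot'(y\cdot' z)=\alpha\big((\alpha')^{2}(x)\big)\cdot\big((\alpha')^{2}(y)\cdot(\alpha')^{2}(z)\big)$; their difference is precisely the original Hom-associator $as_{A}\big((\alpha')^{2}(x),(\alpha')^{2}(y),(\alpha')^{2}(z)\big)$, which vanishes by \eqref{Homass:homassociator}. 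The identical rewriting applied to $\ast'$ turns each side of the twisted Hom-pre-Lie identity \eqref{klkl} into the original one evaluated at $(\alpha')^{2}(x),(\alpha')^{2}(y),(\alpha')^{2}(z)$, so $(A,\ast',\bar\alpha)$ is a Hom-pre-Lie algebra.

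Next I would verify the two compatibility conditions. For \eqref{eq:pre-Poisson 1} both sides reduce under the rewriting to $\big((\alpha')^{2}(x)\cdot(\alpha')^{2}(y)\big)\ast\alpha\big((\alpha')^{2}(z)\big)$ and $\alpha\big((\alpha')^{2}(x)\big)\cdot\big((\alpha')^{2}(y)\ast(\alpha')^{2}(z)\big)$ respectively, which coincide by \eqref{eq:pre-Poisson 1} in $\mathcal{A}$. For \eqref{eq:pre-Poisson 2} the same procedure rewrites each of the four terms, for instance $(x\ast' y)\cdot'\bar\alpha(z)=\big((\alpha')^{2}(x)\ast(\alpha')^{2}(y)\big)\cdot\alpha\big((\alpha')^{2}(z)\big)$, so that the whole identity becomes \eqref{eq:pre-Poisson 2} in $\mathcal{A}$ at the arguments $(\alpha')^{2}(x),(\alpha')^{2}(y),(\alpha')^{2}(z)$.

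The computations are routine; the only point requiring care is the bookkeeping of the twisting map, namely that the correct new structure map is $\bar\alpha=\alpha\circ\alpha'$ rather than $\alpha$ or $\alpha'$ alone, and that one must repeatedly invoke $\alpha'\circ\alpha=\alpha\circ\alpha'$ in order to bring each solitary $\alpha$ into the position $\alpha\big((\alpha')^{2}(\cdot)\big)$ matching the original axioms. I expect no genuine obstacle beyond this tracking of composites.
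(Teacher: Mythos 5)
Your proof is correct: since the paper's Proposition~\ref{isma} is stated without proof, your direct verification fills the gap, and it is exactly the standard argument the paper itself uses for the closely related Theorem~\ref{Yaupre-poiss} (push the morphism through both products by multiplicativity, slide the lone structure map past it by $\alpha\circ\alpha'=\alpha'\circ\alpha$, and recognize each twisted axiom as the original axiom at the arguments $(\alpha')^{2}(x),(\alpha')^{2}(y),(\alpha')^{2}(z)$). Your bookkeeping of the composite twisting map $\bar\alpha=\alpha\circ\alpha'$ and of the shifted arguments is accurate, so nothing is missing.
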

\begin{cor}
If $\mathcal{A}=(A, \cdot,\ast,\alpha)$ is a multiplicative Hom-pre-Lie Poisson
algebra, then for any $n\in\mathbb{N}^{\ast}$,
\begin{enumerate}
\item
The $n{\rm th}$ derived Hom-pre-Lie Poisson algebra of type $1$ of $\mathcal{A}$ is
defined by
$$\mathcal{A}_{1}^{n}=(A,\cdot^{(n)}=\alpha^{n}\circ\cdot,\ast^{(n)}=\alpha^{n}\circ\ast,\alpha^{n+1}).$$
\item
The $n{\rm th}$ derived Hom-pre-Lie Poisson algebra of type $2$ of $A$ is
defined by
$$\mathcal{A}_{2}^{n}=(A,\cdot^{(2^n-1)}=\alpha^{2^n-1}\circ\cdot,\ast^{(2^n-1)}=\alpha^{2^n-1}\circ\ast,\alpha^{2^n}).$$
\end{enumerate}
\end{cor}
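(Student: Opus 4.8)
The plan is to derive both assertions by iterating Proposition \ref{isma}, whose hypothesis requires that the map used to twist be a morphism of the Hom-pre-Lie Poisson algebra being deformed. The crucial preliminary observation is that, since $\mathcal{A}=(A,\cdot,\ast,\alpha)$ is multiplicative, the map $\alpha$ itself is a morphism $\mathcal{A}\to\mathcal{A}$: it satisfies $\alpha(x\cdot y)=\alpha(x)\cdot\alpha(y)$ and $\alpha(x\ast y)=\alpha(x)\ast\alpha(y)$, and it trivially commutes with the twisting map. Consequently every power $\alpha^{k}$ is a composite of morphisms, hence again a morphism of $\mathcal{A}$.

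For the type $1$ algebra I would apply Proposition \ref{isma} a single time, taking $\alpha'=\alpha^{n}$. Since $\alpha^{n}$ is a morphism of $\mathcal{A}$ by the observation above, the proposition yields that $(A,\alpha^{n}\circ\cdot,\alpha^{n}\circ\ast,\alpha\circ\alpha^{n})$ is a Hom-pre-Lie Poisson algebra; and $\alpha\circ\alpha^{n}=\alpha^{n+1}$, so this is precisely $\mathcal{A}_{1}^{n}$. This settles (i) at once.

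For the type $2$ algebra I would argue by induction on $n$. For the base case $n=1$, applying Proposition \ref{isma} with $\alpha'=\alpha$ gives $(A,\alpha\circ\cdot,\alpha\circ\ast,\alpha^{2})=\mathcal{A}_{2}^{1}$. For the inductive step, assume $\mathcal{A}_{2}^{n}=(A,\alpha^{2^{n}-1}\circ\cdot,\alpha^{2^{n}-1}\circ\ast,\alpha^{2^{n}})$ is a Hom-pre-Lie Poisson algebra. I would first check that it is multiplicative, i.e.\ that its twisting map $\alpha^{2^{n}}$ is an algebra endomorphism for the deformed products; this follows from the multiplicativity of $\alpha$ on the original products, since both $\alpha^{2^{n}}\!\bigl((\alpha^{2^{n}-1}\circ\cdot)(x,y)\bigr)$ and $(\alpha^{2^{n}-1}\circ\cdot)\bigl(\alpha^{2^{n}}(x),\alpha^{2^{n}}(y)\bigr)$ reduce to $\alpha^{2^{n+1}-1}(x\cdot y)$, and likewise for $\ast$. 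Hence $\alpha^{2^{n}}$ is a morphism of $\mathcal{A}_{2}^{n}$, and applying Proposition \ref{isma} to $\mathcal{A}_{2}^{n}$ with $\alpha'=\alpha^{2^{n}}$ produces
$$\bigl(A,\ \alpha^{2^{n}}\circ(\alpha^{2^{n}-1}\circ\cdot),\ \alpha^{2^{n}}\circ(\alpha^{2^{n}-1}\circ\ast),\ \alpha^{2^{n}}\circ\alpha^{2^{n}}\bigr)=\bigl(A,\alpha^{2^{n+1}-1}\circ\cdot,\alpha^{2^{n+1}-1}\circ\ast,\alpha^{2^{n+1}}\bigr),$$
which is exactly $\mathcal{A}_{2}^{n+1}$, completing the induction.

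There is no serious analytic obstacle here; the content lies entirely in correctly tracking the exponents of $\alpha$ and in verifying at each stage that the twisting map remains a morphism of the twisted structure. The only point that genuinely demands care is the preservation of multiplicativity under twisting in the inductive step, which is where the standing hypothesis that $\mathcal{A}$ is multiplicative is actually used: without it, $\alpha^{2^{n}}$ need not be a morphism of $\mathcal{A}_{2}^{n}$ and Proposition \ref{isma} would fail to apply.
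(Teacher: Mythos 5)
Your proof is correct. For part (i) you do exactly what the paper does: a single application of Proposition \ref{isma} with $\alpha'=\alpha^{n}$, after observing that multiplicativity makes every power of $\alpha$ a morphism of $\mathcal{A}$ (the paper leaves this observation implicit). For part (ii), however, the paper again applies Proposition \ref{isma} only once, taking $\alpha'=\alpha^{2^{n}-1}$ so that the twisted structure map is $\alpha\circ\alpha^{2^{n}-1}=\alpha^{2^{n}}$, whereas you proceed by induction, twisting $\mathcal{A}_{2}^{n}$ by its own structure map $\alpha^{2^{n}}$ to produce $\mathcal{A}_{2}^{n+1}$. Both arguments rest on the same key lemma, so the difference is organizational rather than substantive; still, each buys something. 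The paper's one-shot version is shorter and requires nothing beyond checking that $\alpha^{2^{n}-1}$ is a morphism of $\mathcal{A}$. Your inductive version costs an extra verification --- that $\alpha^{2^{n}}$ is still a morphism of the twisted algebra $\mathcal{A}_{2}^{n}$, which you carry out correctly via multiplicativity --- but in exchange it exhibits $\mathcal{A}_{2}^{n+1}$ as the self-twist of $\mathcal{A}_{2}^{n}$, thereby explaining why the type-$2$ exponents double and why these algebras deserve the name ``derived algebras of type $2$.'' One small correction to your closing remark: the multiplicativity hypothesis is not used only in the inductive step; it is equally what makes $\alpha^{n}$ (part (i)) and $\alpha^{2^{n}-1}$ (the paper's part (ii)) morphisms of $\mathcal{A}$ in the first place, so it is needed in every variant of the argument.
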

\begin{proof}
Apply Theorem \ref{isma} with $\alpha'=\alpha^{n}$ and
$\alpha'=\alpha^{2^n-1}$ respectively.
\end{proof}
\begin{prop}
Let  $(A_1,\cdot_1,\ast_1,\alpha_1)$ and $(A_2,\cdot_2,\ast_2,\alpha_2)$ be two Hom-pre-Lie
Poisson algebras. Define two operations $\cdot$ and $\ast$ on $A_1\otimes A_2$ for all $x_1,y_1\in A_1, x_2,y_2\in A_2$ by
\begin{align}
&(x_1\otimes x_2)\cdot (y_1\otimes y_2)=x_1\cdot_1 y_1\otimes x_2\cdot_2
y_2,\label{eq:tensor1:HompreLie}\\
&(x_1\otimes x_2)\ast(y_1\otimes y_2)=(x_1\ast_1 y_1)\otimes (x_2\cdot_2
y_2)+(x_1\cdot_1 y_1)\otimes (x_2\ast_2 y_2),\label{eq:tensor2:HompreLie}\\
&(\alpha_1\otimes \alpha_2)(x_1\otimes x_2)=\alpha_1(x_1)\otimes\alpha_2(x_2). \label{eq:tensor3:HompreLie}\end{align}
Then, $(A_1\otimes A_2,\cdot, \ast,\alpha_1\otimes\alpha_2)$ is a Hom-pre-Lie Poisson algebra.
\end{prop}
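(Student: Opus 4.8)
The plan is to verify, working on decomposable tensors $X=x_1\otimes x_2$, $Y=y_1\otimes y_2$, $Z=z_1\otimes z_2$ and extending by bilinearity, the three ingredients in Definition \ref{def pre-poiss}: that $(A_1\otimes A_2,\cdot,\alpha_1\otimes\alpha_2)$ is commutative Hom-associative, that $(A_1\otimes A_2,\ast,\alpha_1\otimes\alpha_2)$ is Hom-pre-Lie, and that the two compatibility conditions \eqref{eq:pre-Poisson 1}--\eqref{eq:pre-Poisson 2} hold. The first ingredient is immediate: commutativity of $\cdot$ in \eqref{eq:tensor1:HompreLie} follows slotwise from \eqref{Homalg:Commutativityidentity} in each factor, and Hom-associativity follows by applying \eqref{Homass:homassociator} in $A_1$ and $A_2$ separately, since $\cdot$ is simply the tensor of the two commutative Hom-associative products.

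For \eqref{eq:pre-Poisson 1}, I would expand $(X\cdot Y)\ast(\alpha_1\otimes\alpha_2)(Z)$ using \eqref{eq:tensor2:HompreLie}; each of the two resulting tensor summands is turned into the corresponding summand of $(\alpha_1\otimes\alpha_2)(X)\cdot(Y\ast Z)$ by applying \eqref{eq:pre-Poisson 1} in one factor and \eqref{Homass:homassociator} in the other. For \eqref{eq:pre-Poisson 2}, after expanding $(X\ast Y)\cdot(\alpha_1\otimes\alpha_2)(Z)-(Y\ast X)\cdot(\alpha_1\otimes\alpha_2)(Z)$ and using commutativity to align the slots, the two surviving differences are rewritten by \eqref{eq:pre-Poisson 2} in $A_1$ and in $A_2$ respectively; the matching with $(\alpha_1\otimes\alpha_2)(X)\ast(Y\cdot Z)-(\alpha_1\otimes\alpha_2)(Y)\ast(X\cdot Z)$ then rests on the auxiliary identity $\alpha(x)\cdot(y\cdot z)=\alpha(y)\cdot(x\cdot z)$, itself a one-line consequence of \eqref{Homalg:Commutativityidentity} and \eqref{Homass:homassociator}.

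The main work, and the step I expect to be the chief obstacle, is the Hom-pre-Lie identity \eqref{klkl} for $\ast$. Writing $P(X,Y,Z)=(X\ast Y)\ast(\alpha_1\otimes\alpha_2)(Z)-(\alpha_1\otimes\alpha_2)(X)\ast(Y\ast Z)$, the claim is $P(X,Y,Z)=P(Y,X,Z)$. Expanding with \eqref{eq:tensor2:HompreLie} produces eight tensor terms in each of $(X\ast Y)\ast\alpha(Z)$ and $\alpha(X)\ast(Y\ast Z)$, hence sixteen terms in $P(X,Y,Z)$ and sixteen more in $P(Y,X,Z)$. My plan is to sort these according to which factor carries the two $\ast$-products. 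The terms with both $\ast$'s in $A_1$, together with their $X\leftrightarrow Y$ swaps, combine (after using commutativity and \eqref{Homass:homassociator} to identify the common $A_2$-slot) into the Hom-pre-Lie expression of $A_1$ tensored with a fixed element, and so vanish by \eqref{klkl} in $A_1$; symmetrically, the terms with both $\ast$'s in $A_2$ vanish by \eqref{klkl} in $A_2$.

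The remaining family consists of the genuinely mixed terms, where one $\ast$ sits in each factor. Here I would first use \eqref{eq:pre-Poisson 1} in each factor to rewrite the four relevant expressions into a common normal form (for instance replacing $(x_1\cdot_1 y_1)\ast_1\alpha_1(z_1)$ by $\alpha_1(x_1)\cdot_1(y_1\ast_1 z_1)$), so that the mixed contribution to $P(X,Y,Z)-P(Y,X,Z)$ collapses to a difference of two tensor products. Applying \eqref{eq:pre-Poisson 2} in $A_1$ and in $A_2$ to the two slot-differences then shows these two tensor products coincide, so the mixed contribution cancels as well. The bookkeeping of the sixteen-versus-sixteen terms and their correct pairing is the delicate part; once the three families are isolated, each cancellation is a direct appeal to a single one of \eqref{klkl}, \eqref{eq:pre-Poisson 1}, \eqref{eq:pre-Poisson 2} in one factor. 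Finally, $\alpha_1\otimes\alpha_2$ respects both products by \eqref{eq:tensor1:HompreLie}--\eqref{eq:tensor3:HompreLie}, which completes the verification.
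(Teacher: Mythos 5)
Your proposal is correct, but there is nothing in the paper to compare it against: the proposition is stated there without any proof, just as the analogous tensor-product statement for transposed Hom-Poisson algebras in Section \ref{sec2} is. Your direct verification on decomposables is the natural argument and all of its steps do go through; in effect you are supplying a proof the paper omits. The one point you should make fully explicit is the cancellation of the mixed family in the Hom-pre-Lie identity: setting $u_1=(x_1\ast_1 y_1)\cdot_1\alpha_1(z_1)-\alpha_1(x_1)\ast_1(y_1\cdot_1 z_1)$, identity \eqref{eq:pre-Poisson 2} in $A_1$ gives the $x\leftrightarrow y$ symmetry of $u_1$, but you also need the $x\leftrightarrow y$ symmetry of the accompanying $A_2$-slot $\alpha_2(x_2)\cdot_2(y_2\ast_2 z_2)$. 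This holds because \eqref{eq:pre-Poisson 1} together with commutativity \eqref{Homalg:Commutativityidentity} gives
\begin{equation*}
\alpha(x)\cdot(y\ast z)=(x\cdot y)\ast\alpha(z)=(y\cdot x)\ast\alpha(z)=\alpha(y)\cdot(x\ast z),
\end{equation*}
and it is precisely the reason your ``two tensor products coincide''; the same identity with the roles of the two factors exchanged handles the other mixed pair. So the mixed cancellation genuinely uses \eqref{eq:pre-Poisson 1} and \eqref{eq:pre-Poisson 2} jointly, not a single identity per cancellation as your last sentence suggests. Two trivial slips that do not affect the argument: each of $(X\ast Y)\ast(\alpha_1\otimes\alpha_2)(Z)$ and $(\alpha_1\otimes\alpha_2)(X)\ast(Y\ast Z)$ expands into four decomposable terms, not eight, so $P(X,Y,Z)=P(Y,X,Z)$ is an identity among sixteen terms in total; and the closing appeal to $\alpha_1\otimes\alpha_2$ respecting both products is unnecessary --- Definition \ref{def pre-poiss} does not require multiplicativity of the structure map --- and would in any case only be available if $\alpha_1$ and $\alpha_2$ were themselves assumed multiplicative, which they are not.
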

\begin{thm}
Let $\mathcal{A}=(A,\cdot,\ast,\alpha)$ be a Hom-pre-Lie Poisson
algebra. For any $x,y \in A$, let $\{x, y\}=x\ast y-y\ast x$.
Then, $\mathcal{A}^{c}=(A,\cdot,\{\cdot,\cdot\},\alpha)$ is a
transposed Hom-Poisson algebra, called the sub-adjacent transposed Hom-Poisson algebra of $(A,\cdot,\ast,\alpha)$.
\end{thm}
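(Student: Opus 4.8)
The plan is to verify the three defining conditions of a transposed Hom-Poisson algebra for $\mathcal{A}^{c}=(A,\cdot,\{\cdot,\cdot\},\alpha)$. The first condition, that $(A,\cdot,\alpha)$ is a commutative Hom-associative algebra, holds by hypothesis, since it is already part of the data of the Hom-pre-Lie Poisson algebra. The second condition, that $(A,\{\cdot,\cdot\},\alpha)$ with $\{x,y\}=x\ast y-y\ast x$ is a Hom-Lie algebra, is precisely the content of Lemma \ref{lem1} applied to the sub-adjacent structure of $(A,\ast,\alpha)$, so it requires no further argument. Thus the only substantive work is to establish the transposed Hom-Leibniz identity \eqref{leibniz}.

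To do this, I would start from the right-hand side of \eqref{leibniz} and expand both brackets via $\{a,b\}=a\ast b-b\ast a$, obtaining four terms which I split into two groups: those of the form (product)$\,\ast\,\alpha(\cdot)$, namely $(z\cdot x)\ast\alpha(y)$ and $-(z\cdot y)\ast\alpha(x)$, and those of the form $\alpha(\cdot)\,\ast\,$(product), namely $\alpha(x)\ast(z\cdot y)$ and $-\alpha(y)\ast(z\cdot x)$.

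For the first group I would apply the compatibility condition \eqref{eq:pre-Poisson 1} directly, reading it with the commutative product as the left factor, which gives $(z\cdot x)\ast\alpha(y)=\alpha(z)\cdot(x\ast y)$ and $(z\cdot y)\ast\alpha(x)=\alpha(z)\cdot(y\ast x)$; their difference collapses to $\alpha(z)\cdot\{x,y\}$. For the second group I would first use the commutativity of $\cdot$ to rewrite $z\cdot y=y\cdot z$ and $z\cdot x=x\cdot z$, so that the combination becomes exactly the right-hand side of \eqref{eq:pre-Poisson 2}; invoking that identity and then applying commutativity of $\cdot$ once more turns it into $\alpha(z)\cdot(x\ast y)-\alpha(z)\cdot(y\ast x)=\alpha(z)\cdot\{x,y\}$. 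Summing the two groups yields $2\alpha(z)\cdot\{x,y\}$, which is precisely the left-hand side of \eqref{leibniz}, completing the verification.

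The computation is short and there is no serious obstacle; the only point demanding care is the bookkeeping of the variable substitutions when invoking \eqref{eq:pre-Poisson 1} and \eqref{eq:pre-Poisson 2}, together with the repeated use of the commutativity of $\cdot$ to align the product terms so that the two compatibility conditions apply verbatim.
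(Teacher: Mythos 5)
Your proof is correct and follows essentially the same route as the paper's: the Hom-Lie part is delegated to Lemma \ref{lem1}, and the transposed Hom-Leibniz identity \eqref{leibniz} is obtained by expanding the two brackets into four terms, handling the $(\text{product})\ast\alpha(\cdot)$ terms with \eqref{eq:pre-Poisson 1} and the $\alpha(\cdot)\ast(\text{product})$ terms with commutativity combined with \eqref{eq:pre-Poisson 2}. Your bookkeeping---computing the right-hand side directly as the sum of two copies of $\alpha(z)\cdot\{x,y\}$---is in fact slightly cleaner than the paper's displayed regrouping, which contains a sign slip in its final group that your arrangement avoids.
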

\begin{proof}
By Lemma \ref{lem1}, $(A,\{\cdot,\cdot\},\alpha)$ is a Hom-Lie algebra. The
transposed Hom-Leibniz identity is proved as follows:
\begin{align*}
&2\alpha(z)\cdot(x\ast y-y\ast x)-((z\cdot x)\ast\alpha(y))\\
&\quad -\alpha(y)\ast(z\cdot x))-(\alpha(x)\ast(z\cdot y)-(z\cdot y)\ast \alpha(x))\\
&=2\alpha(z)\cdot(x\ast y)-2\alpha(z)\cdot(y\ast x)-(z\cdot x)\ast \alpha(y)\\
&\quad +\alpha(y)\ast(z\cdot x)-\alpha(x)\ast(z\cdot y)+(z\cdot y)\ast\alpha(x)\\
&=(\alpha(z)\cdot(x\ast y)-(z\cdot x)\ast \alpha(y))+((z\cdot y)\ast\alpha(x)\\
&\quad -\alpha(z)\cdot(y\ast x))+(\alpha(z)\cdot(x\ast y)-\alpha(z)\cdot(y\ast x)\\
&\quad -\alpha(y)\ast(z\cdot x)+\alpha(x)\ast(z\cdot y))\\
&=0+0+0=0.
\quad \mbox{(by \eqref{eq:pre-Poisson 1}, \eqref{eq:pre-Poisson 2} and by commuatativity)}
\end{align*}
Thus, $(A,\cdot,\{\cdot,\cdot\},\alpha)$ is a
Hom-transposd Poisson algebra.
\end{proof}
The relation existing between a transposed Hom-Poisson algebra and Hom-pre-Lie Poisson algebra, as illustrated by the following diagram:
$$
\xymatrix{ \mbox{Hom-comm ass alg+ Hom-pre-Lie alg} \ar[rr] \ar[dd]^{[x,y]=x\ast y-y\ast x}&& \mbox{Hom-pre-Lie Poisson alg} \ar[dd]_{[x,y]=}^{x\ast y-y\ast x}\\
&& \\
\mbox{Hom-comm ass alg+Hom-Lie alg} \ar[rr] && \mbox{transposed Hom-Poisson alg.}
}
$$

In the following we introduce the notions of bimodule and matched pair of Hom-pre-Lie Poisson algebras and related relevant properties are also given
\begin{defn}\label{def bim}
Let $\mathcal{A}=(A, \cdot,\ast, \alpha)$ be a Hom-pre-Lie Poisson algebra. A bimodule of $\mathcal{A}$ is a $5$-tuple $(s,l_{\ast}, r_{\ast},\beta,V)$ such that
$( l_{\ast}, r_{\ast},
\beta,V)$ is a bimodule of the Hom-pre-Lie algebra $(A, \ast, \alpha)$ and $( s,
\beta,V)$ is a bimodule of the commutative Hom-associative algebra $(A, \cdot, \alpha)$
satisfying for all $ x, y \in A$, $v\in V$ and $\{x,y\} = x
\ast y - y \ast x,~ \rho = l_{\ast} - r_{\ast}$,
\begin{eqnarray}
l_\ast(x\cdot y)\beta(v)&=&s(\alpha(x))l_\ast(y)v,\label{3,4}\\
r_\ast(\alpha(y))s(x)v&=&s(x\ast y)\beta(v),\label{3,5}\\
r_\ast(\alpha(y))s(x)v&=&s(\alpha(x))r_\ast(y)v,\label{3,6}\\
s(\{x,y\})\beta(v)&=&l_\ast(\alpha(x))s(y)v-l_\ast(\alpha(y))s(x)v,\label{3,7}\\
s(\alpha(y))\rho(x)v&=&l_\ast(\alpha(x))s(y)v-r_\ast(x\cdot y)\beta(v).\label{3,8}
\end{eqnarray}
\end{defn}
\begin{prop}
If $(s,l_{\ast}, r_{\ast},\beta, V)$ is a bimodule of a Hom-pre-Lie Poisson algebra $(A,\cdot,\ast, \alpha),$
then, there is a Hom-pre-Lie Poisson algebra structure on $(A\oplus V,\cdot',\ast',\alpha+\beta),$ where $(A\oplus V,\cdot',\alpha+\beta)$ is the semi-direct product commutative Hom-associative algebra $A\ltimes_{s,\alpha,\beta} V$, and $(A\oplus V,\ast',\alpha+\beta)$ is the semi-direct product Hom-Lie algebra $A\ltimes_{l_{\ast},r_{\ast},\alpha,\beta} V$.
\end{prop}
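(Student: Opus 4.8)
The plan is to assemble the Hom-pre-Lie Poisson structure on $A\oplus V$ from the two semidirect-product constructions already in hand, and then to verify only the two compatibility conditions \eqref{eq:pre-Poisson 1} and \eqref{eq:pre-Poisson 2}. Since $(s,\beta,V)$ is a bimodule of the commutative Hom-associative algebra $(A,\cdot,\alpha)$, Proposition \ref{ass1} guarantees that $(A\oplus V,\cdot',\alpha+\beta)=A\ltimes_{s,\alpha,\beta}V$ is a commutative Hom-associative algebra; and since $(l_\ast,r_\ast,\beta,V)$ is a bimodule of the Hom-pre-Lie algebra $(A,\ast,\alpha)$, Proposition \ref{bimodpre-Lie} gives that $(A\oplus V,\ast',\alpha+\beta)=A\ltimes_{l_\ast,r_\ast,\alpha,\beta}V$ is a Hom-pre-Lie algebra. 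Thus the first two axioms of Definition \ref{def pre-poiss} hold automatically, and the whole proof reduces to checking \eqref{eq:pre-Poisson 1} and \eqref{eq:pre-Poisson 2} for arbitrary elements $x+u,\,y+v,\,z+w\in A\oplus V$ with $x,y,z\in A$ and $u,v,w\in V$.

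To verify \eqref{eq:pre-Poisson 1}, I would expand $\big((x+u)\cdot'(y+v)\big)\ast'(\alpha+\beta)(z+w)$ and $(\alpha+\beta)(x+u)\cdot'\big((y+v)\ast'(z+w)\big)$ using the explicit formulas for $\cdot'$ and $\ast'$. On the $A$-component both sides collapse to $(x\cdot y)\ast\alpha(z)$ and $\alpha(x)\cdot(y\ast z)$, which agree by \eqref{eq:pre-Poisson 1} in $A$. On the $V$-component the required equality separates, according to the vector argument, into three independent identities: the coefficient of $w$ asks that $l_\ast(x\cdot y)\beta(w)=s(\alpha(x))l_\ast(y)w$, which is \eqref{3,4}; the coefficient of $v$ asks that $r_\ast(\alpha(z))s(x)v=s(\alpha(x))r_\ast(z)v$, which is \eqref{3,6}; and the coefficient of $u$ asks that $r_\ast(\alpha(z))s(y)u=s(y\ast z)\beta(u)$, which is \eqref{3,5}. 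Hence \eqref{eq:pre-Poisson 1} holds on $A\oplus V$.

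The verification of \eqref{eq:pre-Poisson 2} is analogous but a little longer. Expanding $\big((x+u)\ast'(y+v)\big)\cdot'(\alpha+\beta)(z+w)-\big((y+v)\ast'(x+u)\big)\cdot'(\alpha+\beta)(z+w)$ and the corresponding right-hand side, the $A$-components reproduce \eqref{eq:pre-Poisson 2} in $A$. On the $V$-component I would again group the remaining terms by the vector they act on, using $\{x,y\}=x\ast y-y\ast x$ and $\rho=l_\ast-r_\ast$ as in Definition \ref{def bim}. The terms in $w$ assemble into $s(\{x,y\})\beta(w)=l_\ast(\alpha(x))s(y)w-l_\ast(\alpha(y))s(x)w$, which is \eqref{3,7}; the terms in $v$ assemble into $s(\alpha(z))\rho(x)v=l_\ast(\alpha(x))s(z)v-r_\ast(x\cdot z)\beta(v)$, which is \eqref{3,8} with $y$ replaced by $z$; and the terms in $u$ assemble into $s(\alpha(z))\rho(y)u=l_\ast(\alpha(y))s(z)u-r_\ast(y\cdot z)\beta(u)$, which is again \eqref{3,8} under the substitution $x\mapsto y,\ y\mapsto z$. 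This completes the check.

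The computations themselves are routine bilinear bookkeeping; no single step is deep. The only real care needed is in the grouping: one must collect the several $V$-valued summands on each side according to which of $u,v,w$ they involve, and then recognize that each resulting identity is precisely one of the bimodule axioms \eqref{3,4}--\eqref{3,8} after the appropriate relabeling. In effect, the five defining relations of a Hom-pre-Lie Poisson bimodule were chosen exactly so that these $V$-components cancel, and the main (mild) obstacle is simply keeping the index substitutions straight.
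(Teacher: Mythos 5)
Your proposal is correct and follows essentially the same route as the paper: invoke Proposition \ref{ass1} and Proposition \ref{bimodpre-Lie} to get the commutative Hom-associative and Hom-pre-Lie structures on $A\oplus V$, then verify \eqref{eq:pre-Poisson 1} and \eqref{eq:pre-Poisson 2} by direct expansion, with the $V$-components reducing (after the relabelings you indicate) exactly to the bimodule axioms \eqref{3,4}--\eqref{3,8}. Your grouping of the $V$-valued terms by which of $u,v,w$ they act on is the same bookkeeping the paper performs, just stated more systematically.
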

\begin{proof}
Let $(s,l_{\ast}, r_{\ast}, \beta, V)$ be a bimodule of a Hom-pre-Lie Poisson algebra
$(A,\cdot,\ast, \alpha).$
By Proposition \ref{ass1} and Proposition \ref{bimodpre-Lie}, $(A\oplus V,\cdot',\alpha+\beta)$ is a commutative Hom-associative algebra and $(A\oplus V,\ast',\alpha+\beta)$ is a Hom-pre-Lie algebra respectively.
Now, we prove the axioms \eqref{eq:pre-Poisson 1} and \eqref{eq:pre-Poisson 2} in $A\oplus V$. For any
$x_{1},x_{2},x_{3}\in A$ and $v_1, v_2, v_3\in V$,
\begin{align*}
& ((x_1+v_1)\cdot'(x_2+v_2))\ast'(\alpha+\beta)(x_3+v_3)\\
&\quad =(x_1\cdot x_2+s(x_1)v_2+s(x_2)v_1)\ast'(\alpha(x_3)+\beta(v_3))\\
&\quad =(x_1\cdot x_2)\ast\alpha(x_3)+l_{\ast}(x_1\cdot x_2)\beta(v_3)
+r_{\ast}(\alpha(x_3))s(x_1)v_2+r_{\ast}(\alpha(x_3))s(x_2)v_1,
\\
& (\alpha+\beta)(x_1+v_1)\cdot'((x_2+v_2)\ast'(x_3+v_3))\\
&\quad =(\alpha(x_1)+\beta(v_1))\cdot'(x_2\ast x_3+l_{\ast}(x_2)v_3+r_{\ast}(x_3)v)\\
&\quad =\alpha(x_1)\cdot(x_2\ast x_3)+s(\alpha(x_1))l_{\ast}(x_2)v_3+s(\alpha(x_1))r_{\ast}(x_3)v+s(x_2\ast x_3)\beta(v_1).
\end{align*}
By \eqref{3,4}-\eqref{3,6},
and \eqref{eq:pre-Poisson 1} in $A$,
\begin{align*}
&((x_1+v_1)\cdot'(x_2+v_2))\ast'(\alpha+\beta)(x_3+v_3) \\
&\quad \quad \quad \quad \quad \quad = (\alpha+\beta)(x_1+v_1)\cdot'((x_2+v_2)\ast'(x_3+v_3)),\\
& ((x_1+v_1)\ast'(x_2+v_2))\cdot'(\alpha+\beta)(x_3+v_3)-((x_2+v_2)\ast'(x_1+v_1))\cdot'(\alpha+\beta)(x_3+v_3)\\
&\quad =(x_1\ast x_2+l_{\ast}(x_1)v_2+r_\ast(x_2)v_1)\cdot'(\alpha(x_1)+\beta(v_2))\\&\quad\quad-(x_2\ast x_1+l_\ast(x_2)v_1+r_\ast(x_1)v_2)\cdot'(\alpha(x_3)+\beta(v_3))\\
&\quad =(x_1\ast x_2)\cdot\alpha(x_3)+s(x_1\ast x_2)\beta(v_3)+s(\alpha(x_3))l_\ast(x_1)v_2+s(\alpha(x_3))r_\ast(x_2)v_1\\
&\quad\quad-(x_2\ast x_1)\cdot\alpha(x_3)-s(x_2\ast x_1)\beta(v_3)-s(\alpha(x_3))l_\ast(x_2)v_1-s(\alpha(x_3))r_\ast(x_1)v_2,
\\
&(\alpha+\beta)(x_1+v_1)\ast'((x_2+v_2)\cdot'(x_3+v_3))-(\alpha+\beta)(x_2+v_2)\ast'((x_1+v_1)\cdot'(x_3+v_3))\\
&\quad=(\alpha(x_1)+\beta(v_1))\ast'(x_2\ast x_3+s(x_2)v_3+s(x_3)v_2)\\
&\quad\quad-(\alpha(x_2)+\beta(v_2))\ast'(x_1\cdot x_3+s((x_1)v_3+s(x_3)v_1)\\
&\quad=\alpha(x_1)\ast(x_2\cdot x_3)+s(\alpha(x_1))s(x_2)v_3+l_\ast(\alpha(x_1))s(x_3)v_2+r_\ast(x_1\cdot x_3)\beta(v_2)\\
&\quad\quad-\alpha(x_2)\ast(x_1\cdot x_3)-l_\ast(\alpha(x_2))s(x_1)v_3-l_\ast(\alpha(x_2))s(x_3)v_1-r_\ast(x_1\cdot x_3)\beta(v_2).
\end{align*}
By \eqref{3,7}-\eqref{3,8}, and \eqref{eq:pre-Poisson 2} in $A$,
\begin{align*}&((x_1+v_1)\ast'(x_2+v_2))\cdot'(\alpha+\beta)(x_3+v_3) -((x_2+v_2)\ast'(x_1+v_1))\cdot'(\alpha+\beta)(x_3+v_3)
\\&=(\alpha+\beta)(x_1+v_1)\ast'((x_2+v_2)\cdot'(x_3+v_3))\\
& \quad \quad \quad \quad \quad \quad -(\alpha+\beta)(x_2+v_2)\ast'((x_1+v_1)\cdot'(x_3+v_3)).
\qedhere \end{align*}
\end{proof}
We denote such Hom-pre-Lie Poisson algebra by $A\times_{s, l_{\ast},r_{\ast},\alpha, \beta} V$.

\begin{ex}
Let $(A,\cdot,\ast,\alpha)$ be a Hom-pre-Lie Poisson algebra.
Then, $(S,L_{\ast},R_{\ast},\alpha,A)$ is called a regular
bimodule of  $(A,\cdot,\ast,\alpha)$, where for all $(x,y)\in A\times A$,
$$S(x)y=x\cdot y, \quad L_{\ast}(x)y=x\ast y, \quad R_{\ast}(x)y=y\ast x.$$
\end{ex}
\begin{prop}
If $f:\mathcal{A}=(A,\cdot_1,\ast_1,\alpha)\longrightarrow(A',\cdot_2,\ast_2,\beta)$ is a Hom-pre-Lie Poisson algebras morphism, then
$(s_1,l_{\ast_1},r_{\ast_1},\beta,A')$
becomes a bimodule of $\mathcal{A}$ via $f$, that is, for all $(x,y)\in A\times A'$,
$s_1(x)y=f(x)\cdot_2 y,$ \ $l_{\ast_1}(x)y=f(x)\ast_2 y,$ \ $r_{\ast_1}(x)y=y \ast_2 f(x).$
\end{prop}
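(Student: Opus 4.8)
The maps to be tested are precisely the pullbacks along $f$ of the \emph{regular} bimodule of the target: writing $S(a)b=a\cdot_2 b$, $L_{\ast}(a)b=a\ast_2 b$ and $R_{\ast}(a)b=b\ast_2 a$, one has $s_1=S\circ f$, $l_{\ast_1}=L_{\ast}\circ f$ and $r_{\ast_1}=R_{\ast}\circ f$. Thus the statement asserts that precomposing the regular bimodule of $(A',\cdot_2,\ast_2,\beta)$ with the morphism $f$ yields a bimodule of $\mathcal{A}$. The plan is to verify every clause of Definition \ref{def bim} by the same mechanical recipe: insert the definitions, push $f$ through each product using $f(x\cdot_1 y)=f(x)\cdot_2 f(y)$, $f(x\ast_1 y)=f(x)\ast_2 f(y)$ and $f\circ\alpha=\beta\circ f$, and then read off the corresponding axiom of the Hom-pre-Lie Poisson algebra $A'$. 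I use tacitly that $\beta$ is multiplicative for both $\cdot_2$ and $\ast_2$, as holds in a multiplicative Hom-pre-Lie Poisson algebra.

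First I would dispatch the two ``subalgebra'' clauses. For the commutative Hom-associative part, \eqref{Cond1} becomes $(f(x)\cdot_2 f(y))\cdot_2\beta(v)=\beta(f(x))\cdot_2(f(y)\cdot_2 v)$, which is Hom-associativity of $\cdot_2$, and \eqref{Cond2} becomes $\beta(f(x)\cdot_2 v)=\beta(f(x))\cdot_2\beta(v)$, i.e.\ multiplicativity. For the Hom-pre-Lie part, the two structural axioms of a Hom-pre-Lie bimodule turn, after substitution, into rearrangements of the Hom-pre-Lie identity \eqref{klkl} in $A'$ with suitable arguments (the second one after expanding $\rho_1=l_{\ast_1}-r_{\ast_1}$), while the two module-map axioms are again the multiplicativity of $\beta$ for $\ast_2$.

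It remains to verify the compatibility conditions \eqref{3,4}--\eqref{3,8}. After the substitution, \eqref{3,4} and \eqref{3,6} are literal instances of \eqref{eq:pre-Poisson 1} in $A'$ (with arguments $(f(x),f(y),v)$ and $(f(x),v,f(y))$ respectively), \eqref{3,5} follows from \eqref{eq:pre-Poisson 1} together with commutativity of $\cdot_2$ (both sides reduce to $\beta(v)\cdot_2(f(x)\ast_2 f(y))$), and \eqref{3,7} is exactly \eqref{eq:pre-Poisson 2} at $(f(x),f(y),v)$. The one step I expect to require real care is \eqref{3,8}, which is the only condition entangling both compatibility axioms. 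Here the plan is to expand $\rho_1$ on the left and use \eqref{eq:pre-Poisson 1} (with commutativity of $\cdot_2$) to convert each term of the form $\beta(\cdot)\cdot_2(\cdot\ast_2\cdot)$ into one of the form $(\cdot\cdot_2\cdot)\ast_2\beta(\cdot)$; doing the same on the right reduces the desired equality to an instance of \eqref{eq:pre-Poisson 2}, with commutativity of $\cdot_2$ supplying the matching of arguments. Once \eqref{3,8} is checked, all clauses of Definition \ref{def bim} hold and $(s_1,l_{\ast_1},r_{\ast_1},\beta,A')$ is a bimodule of $\mathcal{A}$.
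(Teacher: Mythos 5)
Your proposal is correct and follows essentially the same route as the paper: direct verification of each bimodule axiom by substituting $s_1(x)y=f(x)\cdot_2 y$, $l_{\ast_1}(x)y=f(x)\ast_2 y$, $r_{\ast_1}(x)y=y\ast_2 f(x)$, pushing $f$ through the products via the morphism identities, and then reading off Hom-associativity, the Hom-pre-Lie identity \eqref{klkl}, and the compatibility axioms \eqref{eq:pre-Poisson 1}--\eqref{eq:pre-Poisson 2} together with commutativity of $\cdot_2$ in $A'$ (the paper writes out exactly the conditions \eqref{3,4}--\eqref{3,8} that you single out, and its treatment of \eqref{3,8} is the same reduction to \eqref{eq:pre-Poisson 2} plus commutativity that you describe). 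One point in your favor: you explicitly flag that the Hom-module-map conditions such as \eqref{Cond2}, $\beta(l_{\ast_1}(x)v)=l_{\ast_1}(\alpha(x))\beta(v)$ and $\beta(r_{\ast_1}(x)v)=r_{\ast_1}(\alpha(x))\beta(v)$ require multiplicativity of $\beta$ with respect to $\cdot_2$ and $\ast_2$, a hypothesis the paper's proof never confronts because it dismisses those axioms as ``proved similarly.''
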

\begin{proof}
We prove the axioms \eqref{3,4}-\eqref{3,8}, as the others being proved similarly. For any
$x,y\in A$ and $z\in A'$,
\begin{align*}
    l_{\ast_1}(x\cdot_1 y)\beta(z)&=f(x\cdot_1 y)\ast_2\beta(z)\\&=(f(x)\cdot_2f(y))\ast_2\beta(z)\\
    &=\beta f(x)\cdot_2(f(y)\ast_2 z)\\&=f(\alpha(x))\cdot_2(f(y)\ast_2 z)\\
    &=s_1(\alpha(x))(f(y)\ast_2 z)\\&=s_1(\alpha(x))l_{\ast_1}(y)z,\\
    r_{\ast_1}(\alpha(y))s_1(x)(z)&=r_{\ast_1}(\alpha(y))(z\cdot_2 f(x))\\&=(z\cdot_2 f(x))\ast_2 f(\alpha(y))\\
    &=(z\cdot_2 f(x))\ast_2\beta(f(y))\\
    &=(f(x)\ast_2 f(y))\cdot_2 \beta(z)\\
   &= f(x\ast_1 y)\cdot_2\beta(z)\\
    &=s_1(x\ast_1 y)\beta(z),\\
    r_{\ast_1}(\alpha(y))s_1(x)z&=s_1(x)z\ast_2 f(\alpha(y))\\
    &=s_1(x)z\ast_2 \beta(f(y))\\
    &=(f(x)\cdot_2 z)\ast_2 \beta(f(y))\\
    &=\beta(f(x))\cdot_2(z\ast_2 f(y))\\
    &=f(\alpha(x))\cdot_2(z\ast_2 f(y))\\
    &=s_1(\alpha(x))(z\ast_{2}f(y))\\
    &=s_1(\alpha(x))r_{\ast_1}(y)z,\\
    s_{\ast_1}(\{x,y\}_1)\beta(z)&=s_1(x\ast_1 y-y\ast_1 x)\beta(z)\\
    &=s_1(x\ast_1 y)\beta(z)-s_1(y\ast_1 x)\beta(z)\\
    &=f(x\ast_1 y)\cdot_2\beta(z)-f(y\ast_1 x)\cdot_2\beta(z)\\
    &=(f(x)\ast_2 f(y))\cdot_2 \beta(z)-(f(y)\ast_2 f(x))\cdot_2\beta(z)\\
    &=\beta(f(x))\ast_2(f(y)\cdot_2 z)-\beta(f(y))\ast_2(f(x)\cdot_2 z)\\
    &=f(\alpha(x))\ast_2(f(y)\cdot_2 z)-f(\alpha(y))\ast_2(f(x)\cdot_2 z)\\
    &=l_{\ast_1}(\alpha(x))(f(y)\cdot_2 z)-l_{\ast_1}(\alpha(y))(f(x)\cdot_2 z)\\
    &=l_{\ast_1}(\alpha(x))s_1(y) z-l_{\ast_1}(\alpha(y))s_1(x)z,\\
    s_1(\alpha(y))\rho_1(x)v&=s_1(\alpha(y))(l_{\ast_1}-r_{\ast_1})(x)z\\
    &=s_1(\alpha(y))l_{\ast_1}(x)z-s_1(\alpha(y))r_{\ast_1}(x)z\\
    &=f(\alpha(y))\cdot_2 l_{\ast_1}(x)z-f(\alpha(y))\cdot_2 r_{\ast_1}(x)z\\
    &=\beta(f(y))\cdot_2(f(x)\ast_2 z)-\beta f(y)\cdot_2(z\ast_2 f(x))\\
    &=\beta(f(x))\cdot_2(z\cdot_2 f(y))-\beta(z)\ast_2(f(x)\cdot_2 f(y))\\
    &=f(\alpha(x))\ast_2(z\cdot_2 f(y))-\beta(z)\ast_2(f(x)\cdot_1 y)\\
    &=l_{\ast_1}(\alpha(x))s_1(y)z-r_{\ast_1}(x\cdot_1 y)\beta(z).
\qedhere \end{align*}
\end{proof}
\begin{cor}\label{corooo}
Let $(s,l_{\ast},r_{\ast}, \beta, V)$ be a bimodule of a Hom-pre-Lie Poisson algebra
$(A, \cdot,\ast, \alpha)$. Let $(A, \cdot,\{\cdot,\cdot\}, \alpha)$ be the
subadjacent of $(A, \cdot,\ast, \alpha)$.
Then, $( s,l_\ast-r_\ast,\beta, V)$ is a
representation of $(A, \cdot,\{\cdot,\cdot\}, \alpha)$.
\end{cor}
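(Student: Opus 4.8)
The plan is to verify directly that the quadruple $(s, l_\ast - r_\ast, \beta, V)$ meets all three requirements in the definition of a representation of the subadjacent transposed Hom-Poisson algebra $(A, \cdot, \{\cdot,\cdot\}, \alpha)$, writing $\rho = l_\ast - r_\ast$ throughout. The first requirement, that $(s, \beta, V)$ be a bimodule of the commutative Hom-associative algebra $(A, \cdot, \alpha)$, holds by hypothesis, since it is already part of the data of a Hom-pre-Lie Poisson bimodule in Definition \ref{def bim}. The second requirement, that $(\rho, \beta, V)$ be a representation of the Hom-Lie algebra $(A, \{\cdot,\cdot\}, \alpha)$, is exactly the content of Proposition \ref{propa1}; in particular the Hom-module relation $\beta(\rho(x)v) = \rho(\alpha(x))\beta(v)$ is furnished there, while $\beta(s(x)v) = s(\alpha(x))\beta(v)$ is the associative-bimodule axiom \eqref{Cond2}, so both halves of \eqref{ismail.0} are immediate.

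It then remains to establish the two mixed compatibility identities \eqref{ismail.1} and \eqref{ismail.2}. For \eqref{ismail.1}, I would expand $\rho(\alpha(x))s(y)v - \rho(\alpha(y))s(x)v$ into its four $l_\ast$- and $r_\ast$-terms, rewrite both $r_\ast$-terms via \eqref{3,5} (which turns $r_\ast(\alpha(\cdot))s(\cdot)v$ into an $s(\cdot \ast \cdot)\beta(v)$ term), and identify the remaining $l_\ast$-difference as $s(\{x,y\})\beta(v)$ through \eqref{3,7}. Collecting the two resulting $s(\cdot)\beta(v)$ contributions and using $\{x,y\} = x\ast y - y\ast x$ then produces $2s(\{x,y\})\beta(v)$, which is precisely \eqref{ismail.1}.

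The identity \eqref{ismail.2} is the main obstacle, since it requires coordinating three bimodule axioms together with commutativity. The strategy is to compute $s(\alpha(x))\rho(y)v$ from \eqref{3,8} (after interchanging $x$ and $y$ and using $y\cdot x = x\cdot y$), obtaining $s(\alpha(x))\rho(y)v = l_\ast(\alpha(y))s(x)v - r_\ast(x\cdot y)\beta(v)$. On the other side I would expand the right-hand side $\rho(x\cdot y)\beta(v) + \rho(\alpha(y))s(x)v$ into its $l_\ast$- and $r_\ast$-pieces and rewrite $l_\ast(x\cdot y)\beta(v)$ via \eqref{3,4} and $r_\ast(\alpha(y))s(x)v$ via \eqref{3,6}, so that this side collapses to $s(\alpha(x))\rho(y)v - r_\ast(x\cdot y)\beta(v) + l_\ast(\alpha(y))s(x)v$. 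Substituting the expression for $s(\alpha(x))\rho(y)v$ back in reduces everything to $2\bigl(l_\ast(\alpha(y))s(x)v - r_\ast(x\cdot y)\beta(v)\bigr) = 2s(\alpha(x))\rho(y)v$, yielding \eqref{ismail.2}. Once these two identities are in place, all axioms defining a representation of a transposed Hom-Poisson algebra are satisfied, and the corollary follows. I expect the bookkeeping in \eqref{ismail.2} — keeping the commutativity substitution $y\cdot x = x\cdot y$ and the sign conventions of $\rho = l_\ast - r_\ast$ consistent — to be the only genuinely delicate point.
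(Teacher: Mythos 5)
Your proposal is correct and follows essentially the same route as the paper: the Hom-associative bimodule part holds by hypothesis, the Hom-Lie representation part is Proposition \ref{propa1}, and the mixed identities \eqref{ismail.1}--\eqref{ismail.2} are then checked directly against the bimodule axioms \eqref{3,4}--\eqref{3,8}. The only difference is that the paper dismisses this last verification as ``easy'' without writing it out, whereas you actually supply the computation --- pairing \eqref{3,7} with \eqref{3,5} for \eqref{ismail.1}, and \eqref{3,4}, \eqref{3,6}, \eqref{3,8} with commutativity of $\cdot$ for \eqref{ismail.2} --- and both calculations check out.
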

\begin{proof}
It follows from the relation between the Hom-pre-Lie Poisson algebra
and the associated transposed Hom-Poisson algebra. More precisely
by Poposition \ref{propa1}, we deduce that $(l_\ast-r_\ast,\beta,V)$ is a
representation of $(A, \{\cdot,\cdot\}, \alpha)$. Now, the rest, it is easy (in a similar way as for Poposition \ref{propa1}) to verify the axioms \eqref{ismail.1}-\eqref{ismail.2}
\end{proof}
\begin{thm}\label{mamm}
Let $\mathcal{A}=(A,\cdot,\ast,\alpha)$ be a
Hom-pre-Lie Poisson algebra, and let
$V_{\beta}=(s,l_{\ast},r_{\ast},\beta,V)$ be a bimodule of $\mathcal{A}$. Let $\alpha'$
be a morphism of $\mathcal{A}$ such that the maps $\alpha$ and $\alpha'$ commute,
and let $\beta'$ be a linear map of $V$ such that the maps $\beta$ and $\beta'$
commute. Furthermore, suppose that
$$\left\{
   \begin{array}{lllllll}
    \beta'\circ s=(s\circ\alpha')\beta',\\
      \beta'\circ l_\ast=(l_\ast\circ\alpha')\beta',\\
       \beta'\circ r_\ast=(r_\ast\circ\alpha')\beta',
   \end{array}
 \right.$$
$\mathcal{A}_{\alpha'}$ is the Hom-pre-Lie Poisson algebra
$(A,\cdot_{\alpha'}, \ast_{\alpha'},\alpha\alpha')$,
and
$V_{\beta'}=(\widetilde{s},\widetilde{l}_{\ast},\widetilde{r}_{\ast},\beta\beta',V)$,
where
$
\widetilde{s}=(s\circ\alpha')\beta',~\widetilde{l}_{\ast}
=(l_{\ast}\circ\alpha')\beta',~\widetilde{r}_{\ast}=(r_{\ast}\circ\alpha')\beta'.
$
Then, $V_{\beta'}$ is a bimodule of $\mathcal{A}_{\alpha'}$.
\end{thm}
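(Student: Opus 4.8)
The plan is to verify the three requirements of Definition~\ref{def bim} for the $5$-tuple $V_{\beta'}=(\widetilde{s},\widetilde{l}_{\ast},\widetilde{r}_{\ast},\beta\beta',V)$ relative to $\mathcal{A}_{\alpha'}$: that $(\widetilde{s},\beta\beta',V)$ is a bimodule of the commutative Hom-associative algebra $(A,\cdot_{\alpha'},\alpha\alpha')$, that $(\widetilde{l}_{\ast},\widetilde{r}_{\ast},\beta\beta',V)$ is a bimodule of the Hom-pre-Lie algebra $(A,\ast_{\alpha'},\alpha\alpha')$, and that the five compatibility conditions \eqref{3,4}--\eqref{3,8} hold for the twisted data. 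Note that $\mathcal{A}_{\alpha'}$ is itself a Hom-pre-Lie Poisson algebra by Proposition~\ref{isma}, so these are indeed the correct axioms to check. The key preliminary observation is that the three compatibility hypotheses $\beta'\circ s=(s\circ\alpha')\beta'$, etc., say exactly that $\widetilde{s}(x)=\beta'\circ s(x)=s(\alpha'(x))\circ\beta'$, and likewise $\widetilde{l}_{\ast}(x)=\beta'\circ l_{\ast}(x)=l_{\ast}(\alpha'(x))\circ\beta'$ and $\widetilde{r}_{\ast}(x)=\beta'\circ r_{\ast}(x)=r_{\ast}(\alpha'(x))\circ\beta'$; these two forms let me freely move $\beta'$ from one side of a structure map to the other at the cost of shifting its argument by $\alpha'$.

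Each axiom reduces to its untwisted counterpart by the same uniform mechanism: substitute the twisted maps, use that $\alpha'$ is a morphism to distribute it over products, so $\alpha'(x\cdot y)=\alpha'(x)\cdot\alpha'(y)$ and $\alpha'(x\ast y)=\alpha'(x)\ast\alpha'(y)$, use the commutations $\alpha\alpha'=\alpha'\alpha$ and $\beta\beta'=\beta'\beta$ to gather powers of $\alpha'$ and $\beta'$, and use the compatibility relations to commute $\beta'$ through $s,l_{\ast},r_{\ast}$. The outcome is always the original axiom evaluated on arguments shifted by powers of $\alpha'$ and $\beta'$. For instance, for \eqref{Cond1} one computes
\begin{align*}
\widetilde{s}(x\cdot_{\alpha'}y)\,\beta\beta'(v)
&=s\bigl(\alpha'(\alpha'(x)\cdot\alpha'(y))\bigr)\beta'\beta\beta'(v)
=s\bigl((\alpha')^{2}(x)\cdot(\alpha')^{2}(y)\bigr)\beta\bigl((\beta')^{2}(v)\bigr)\\
&=s\bigl(\alpha(\alpha')^{2}(x)\bigr)s\bigl((\alpha')^{2}(y)\bigr)(\beta')^{2}(v)
=\widetilde{s}(\alpha\alpha'(x))\,\widetilde{s}(y)\,v,
\end{align*}
where the third equality is \eqref{Cond1} applied in $\mathcal{A}$ to the arguments $(\alpha')^{2}(x),(\alpha')^{2}(y),(\beta')^{2}(v)$. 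The Hom-module condition \eqref{Cond2} and the two $\beta$-compatibilities in the Hom-pre-Lie bimodule definition are even shorter, being immediate from $\widetilde{s}(x)=\beta'\circ s(x)$ together with $\beta\beta'=\beta'\beta$.

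The remaining Hom-pre-Lie bimodule axioms and the compatibility conditions \eqref{3,4}--\eqref{3,8} are handled identically. For example, for \eqref{3,4} the left side $\widetilde{l}_{\ast}(x\cdot_{\alpha'}y)\beta\beta'(v)$ collapses to $l_{\ast}\bigl((\alpha')^{2}(x)\cdot(\alpha')^{2}(y)\bigr)\beta\bigl((\beta')^{2}(v)\bigr)$, which by \eqref{3,4} in $\mathcal{A}$ equals $s\bigl(\alpha(\alpha')^{2}(x)\bigr)l_{\ast}\bigl((\alpha')^{2}(y)\bigr)(\beta')^{2}(v)=\widetilde{s}(\alpha\alpha'(x))\widetilde{l}_{\ast}(y)v$; the cases \eqref{3,5}--\eqref{3,8}, which additionally involve $\{\cdot,\cdot\}_{\alpha'}$ and $\rho=\widetilde{l}_{\ast}-\widetilde{r}_{\ast}$, follow the same template once one notes that $\{x,y\}_{\alpha'}=\alpha'(\{x,y\})$ and that $\rho$ twists in the same way as $l_{\ast}$ and $r_{\ast}$, namely $\widetilde{\rho}(x)=\beta'\circ\rho(x)=\rho(\alpha'(x))\circ\beta'$. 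There is no conceptual obstacle: the only genuine difficulty is bookkeeping---keeping exact track of how many copies of $\alpha'$ and $\beta'$ accumulate on each argument and of the order in which the structure maps act---so the main care is to apply the compatibility relations in the correct direction at each step.
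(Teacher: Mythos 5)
Your proposal is correct and takes essentially the same route as the paper's own proof: both reduce each twisted bimodule axiom to the corresponding untwisted axiom evaluated on arguments shifted by powers of $\alpha'$ and $\beta'$, using the morphism property of $\alpha'$, the commutations $\alpha\alpha'=\alpha'\alpha$, $\beta\beta'=\beta'\beta$, and the hypotheses $\beta'\circ s=(s\circ\alpha')\beta'$, $\beta'\circ l_{\ast}=(l_{\ast}\circ\alpha')\beta'$, $\beta'\circ r_{\ast}=(r_{\ast}\circ\alpha')\beta'$ to move $\beta'$ through the structure maps. The difference is only presentational: the paper writes out the computations for \eqref{3,4}--\eqref{3,8} and declares the remaining axioms similar, while you write out \eqref{Cond1} and \eqref{3,4} and appeal to the same uniform mechanism for the rest.
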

\begin{proof}
We prove only the axioms \eqref{3,4}-\eqref{3,8}, as the  others being proved similarly. For any
$x,y\in A$ and $v\in V$,
\begin{align*}
\widetilde{l}_\ast(x\ast_\alpha' y)\beta\beta'(v)&=l_\ast(\alpha'^{2}(x)\ast\alpha'^{2}(y))\beta\beta'^{2}(v)\\
&=s(\alpha\alpha'^{2}(x))l_{\ast}(\alpha'^{2}(y))\beta'^{2}(v)
\quad \mbox{(by \eqref{3,4} in $\mathcal{A}$)}\\
&=\widetilde{s}(\alpha\alpha'(x))\widetilde{l}_{\ast}(y)v,\\
\widetilde{r_\ast}(\alpha\alpha'(y))\widetilde{s}(x)v&=\widetilde{r_\ast}(\alpha\alpha'(y))s(\alpha'(x))\beta'(v)\\
&=r_\ast(\alpha\alpha'^{2}(y))\beta's((\alpha'(x))\beta'(v)\\
&=r_\ast(\alpha\alpha'^{2}(y))s((\alpha'^{2}(x))\beta'^{2}(v)\\
&=s(\alpha'^{2}(x)\ast\alpha'^{2}(y))\beta\beta'^{2}(v)\\
&=s(\alpha'(x)\ast_{\alpha'}\alpha'(y))\beta\beta'^{2}(v)\\
&=s(\alpha'(x)\ast_{\alpha'}\alpha'(y))\beta\beta'^{2}(v)\\
&=\widetilde{s}(x\ast_{\alpha'}y)\beta\beta'(v),\\
\widetilde{r_\ast}(\alpha\alpha'(y))\widetilde{s}(x)v&=\widetilde{r_\ast}(\alpha\alpha'(y))s(\alpha'(x))\beta'(v)\\
&=r_\ast(\alpha\alpha'^{2}(y))s(\alpha'^{2}(x))\beta'^{2}(v)\\
&=s(\alpha\alpha'^{2}(x))r_\ast(\alpha'^{2}(y))\beta'^{2}(v)\\
&=s(\alpha\alpha'^{2}(x))\widetilde{r_\ast}(\alpha'(y))\beta'(v)\\
&=\widetilde{s}(\alpha\alpha'(x))\widetilde{r_\ast}(y)v,\\
\widetilde{s}(\{x,y\}_{\alpha'})\beta\beta'(v)&=\widetilde{s}(\{\alpha'(x),\alpha'(y)\})\beta\beta'(v)\\
&=s(\{\alpha'^{2}(x),\alpha'^{2}(y)\})\beta\beta'^{2}(v)\\
&=l_{\ast}(\alpha\alpha'^{2}(x))s(\alpha'^{2}(y))\beta'^{2}(v)\\
&-l_{\ast}(\alpha\alpha'^{2}(y))s(\alpha'^{2}(x))\beta'^{2}(v)\\
&=l_{\ast}(\alpha\alpha'^{2}(x))\widetilde{s}(\alpha'(y))\beta'(v)\\
&-l_{\ast}(\alpha\alpha'^{2}(y))\widetilde{s}(\alpha'(x))\beta'(v)\\
&=\widetilde{l_\ast}(\alpha\alpha'(x))\widetilde{s}(y)v\\
&-\widetilde{l_\ast}(\alpha\alpha'(y))\widetilde{s}(x)v,\\
\widetilde{s}(\alpha\alpha'(y))\widetilde{\rho}(x)v&=\widetilde{s}(\alpha\alpha'(y))\rho(\alpha'(x))\beta'(v)\\
&=\widetilde{s}(\alpha\alpha'^{2}(y))\rho(\alpha'^{2}(x))\beta'^{2}(v)\\
&=l_\ast(\alpha\alpha'^{2}(y))\rho(\alpha'^{2}(x))s(\alpha'^{2}(y))\beta'^{2}(v)\\
&-r_\ast(\alpha'^{2}(x)\cdot\alpha'^{2}(y))\beta\beta'^{2}(v)\\
&=l_\ast(\alpha\alpha'^{2}(y))\rho(\alpha'^{2}(x))\widetilde{s}(\alpha'(y))\beta'(v)\\
&-\widetilde{r_\ast}(\alpha'(x)\cdot\alpha'(y))\beta\beta'(v)\\
&=\widetilde{l_\ast}(\alpha\alpha'(x))\widetilde{s}(y)v\\
&-\widetilde{r_\ast}(x\cdot_\alpha' y)\beta\beta'(v).
\qedhere \end{align*}
\end{proof}
Taking $\alpha'=\alpha^p$ and $\beta'=\beta^q$ leads to the following statement.
\begin{cor}
Let $\mathcal{A}=(A,\cdot,\ast,\alpha)$ be a multiplicative Hom-pre-Lie Poisson algebra, and
$(s,l_{\ast},r_{\ast},\beta,V)$ a bimodule of
$\mathcal{A}$. Then, $V_{\beta^{q}}$ is a bimodule of $A_{\alpha^{p}}$ for any
nonnegative integers $p$ and $q$.
\end{cor}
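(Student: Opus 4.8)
The plan is to derive the statement by specializing Theorem~\ref{mamm} to $\alpha'=\alpha^{p}$ and $\beta'=\beta^{q}$. With this choice the twisted algebra $\mathcal{A}_{\alpha'}$ is precisely $\mathcal{A}_{\alpha^{p}}=(A,\cdot_{\alpha^{p}},\ast_{\alpha^{p}},\alpha^{p+1})$, and the twisted $5$-tuple $V_{\beta'}$ is exactly $V_{\beta^{q}}=(\widetilde{s},\widetilde{l}_{\ast},\widetilde{r}_{\ast},\beta^{q+1},V)$ with $\widetilde{s}=(s\circ\alpha^{p})\beta^{q}$, $\widetilde{l}_{\ast}=(l_{\ast}\circ\alpha^{p})\beta^{q}$ and $\widetilde{r}_{\ast}=(r_{\ast}\circ\alpha^{p})\beta^{q}$. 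Thus it suffices to confirm that the hypotheses of Theorem~\ref{mamm} are satisfied by these two maps.

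First I would check the structural hypotheses. Since $\mathcal{A}$ is multiplicative, $\alpha$ is an algebra morphism for both $\cdot$ and $\ast$, so its iterate $\alpha^{p}$ is again a morphism of $\mathcal{A}$; moreover $\alpha\circ\alpha^{p}=\alpha^{p}\circ\alpha$, so $\alpha$ and $\alpha^{p}$ commute. Dually, $\beta^{q}$ is a linear endomorphism of $V$ commuting with $\beta$. It then remains only to verify the three intertwining relations required by Theorem~\ref{mamm}, that is, $\beta^{q}\circ s=(s\circ\alpha^{p})\beta^{q}$, $\beta^{q}\circ l_{\ast}=(l_{\ast}\circ\alpha^{p})\beta^{q}$ and $\beta^{q}\circ r_{\ast}=(r_{\ast}\circ\alpha^{p})\beta^{q}$.

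These last relations I would obtain by iterating the Hom-module compatibility conditions already carried by the bimodule $(s,l_{\ast},r_{\ast},\beta,V)$ through Definition~\ref{def bim}, namely $\beta(s(x)v)=s(\alpha(x))\beta(v)$, $\beta(l_{\ast}(x)v)=l_{\ast}(\alpha(x))\beta(v)$ and $\beta(r_{\ast}(x)v)=r_{\ast}(\alpha(x))\beta(v)$. Repeated use of these identities moves powers of $\beta$ through $s$, $l_{\ast}$ and $r_{\ast}$, converting them into the corresponding twists by powers of $\alpha$ and thereby establishing the intertwining relations demanded by Theorem~\ref{mamm}. With every hypothesis verified, Theorem~\ref{mamm} applies and yields that $V_{\beta^{q}}$ is a bimodule of $\mathcal{A}_{\alpha^{p}}$. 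I expect the only genuine work to be this inductive bookkeeping of the exponents of $\alpha$ and $\beta$; everything else reduces to a single application of the previously established twisting theorem, so the corollary costs essentially nothing beyond Theorem~\ref{mamm}.
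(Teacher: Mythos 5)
Your strategy is the one the paper itself uses (its entire proof is the remark that one takes $\alpha'=\alpha^{p}$, $\beta'=\beta^{q}$ in Theorem~\ref{mamm}), and your identifications of $\mathcal{A}_{\alpha^{p}}$ and $V_{\beta^{q}}$ are correct. The gap is in the last, and only substantive, step: the intertwining relations demanded by Theorem~\ref{mamm} do \emph{not} follow from Definition~\ref{def bim} by iteration when $p\neq q$. The Hom-module axioms $\beta(s(x)v)=s(\alpha(x))\beta(v)$, $\beta(l_{\ast}(x)v)=l_{\ast}(\alpha(x))\beta(v)$, $\beta(r_{\ast}(x)v)=r_{\ast}(\alpha(x))\beta(v)$ move one $\beta$ past $s$, $l_{\ast}$, $r_{\ast}$ at the cost of exactly one $\alpha$, so induction yields only the diagonal relations
\begin{equation*}
\beta^{q}\circ s=(s\circ\alpha^{q})\beta^{q},\qquad
\beta^{q}\circ l_{\ast}=(l_{\ast}\circ\alpha^{q})\beta^{q},\qquad
\beta^{q}\circ r_{\ast}=(r_{\ast}\circ\alpha^{q})\beta^{q},
\end{equation*}
whereas Theorem~\ref{mamm} specialized at $\alpha'=\alpha^{p}$, $\beta'=\beta^{q}$ requires the mixed relations $\beta^{q}\circ s=(s\circ\alpha^{p})\beta^{q}$, and likewise for $l_{\ast}$ and $r_{\ast}$. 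These coincide only when $p=q$. Concretely, with $p=0$, $q=1$ your claim amounts to $\beta(s(x)v)=s(x)\beta(v)$, which is not an axiom of Definition~\ref{def bim} and does not follow from its axioms; nothing in the definition changes the power of $\alpha$ inside $s$ without simultaneously changing the power of $\beta$ outside.

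So what your argument actually establishes is the case $p=q$, where everything you wrote is correct: multiplicativity makes $\alpha^{p}$ a morphism of $\mathcal{A}$, the commutation requirements are trivial, and the diagonal intertwining relations follow by induction. To be fair, this defect is inherited from the paper: the corollary asserts the conclusion for arbitrary independent $p$ and $q$, but the paper's one-line proof by specialization silently assumes precisely the mixed relations above, and they are not consequences of the stated hypotheses. A clean repair is either to restrict the corollary to $p=q$, or to add the mixed relations $\beta^{q}\circ s=(s\circ\alpha^{p})\beta^{q}$ (and the analogues for $l_{\ast}$, $r_{\ast}$) as explicit hypotheses; your proposal, as written, cannot bridge this for $p\neq q$.
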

\begin{thm}
Let $(A, \cdot_{A},\ast_A, \alpha)$ and $(B,  \cdot_{B}, \ast_B,\beta)$
 be two Hom--pre-Lie Poisson algebras. Suppose that there are linear maps
$ s_{A},l_{\ast_A},r_{\ast_A} : A \rightarrow gl(B),$
and $ s_{B},l_{\ast_B},r_{\ast_B} : B \rightarrow gl(A)$
such that
$A\bowtie^{l_{\ast_A},r_{\ast_A},\beta}_{l_{\ast_B},r_{\ast_B},\alpha}B$ is a matched pair of Hom-pre-Lie algebras and $A\bowtie^{s_A,\beta}_{s_B,\alpha}B$ is a matched pair of commutative Hom-associative algebra, and for all $x, y \in A,~ a, b \in B$, and for
$$\{x,y\}_A=x\ast_A y-y\ast_A x,~\{a,b\}_B=a\ast_ B-b\ast_B a,~
 \rho_A = l_{\ast_{A}}-r_{\ast_{A}},~\rho_B= l_{\ast_{B}}   -r_{\ast_{B}},$$
the following equalities hold:
\begin{align}\label{eq_matched_pre_1}
r_{\ast_A}(\alpha(x))(a\cdot_B b)&=\beta(a)\cdot_B(r_{\ast_A}(x)b)+s_A(l_{\ast_B}(b)x)\beta(a),
\\
\label{eq_matched_pre_2}
(s_A(x)a)\ast_B\beta(b)+l_{\ast_A}(s_B(a)x)\beta(b)&=s_A(\alpha(x))(a\ast_B b),
\\
\label{eq_matched_pre_3}
l_{\ast_A}(s_B(a)x)\beta(b)+(s_A(x)a)\ast_B\beta(b)&=\beta(a)\cdot_B(l_{\ast_A}(x)b)+s_A(r_B(b)x)\beta(a),  \\                \label{eq_matched_pre_4}
\begin{split}
s_{A}(\alpha(x))(\{a\ast_B b\}_B) &=\beta(a)\ast_B(s_A(x)b)+r_{\ast_A}(s_B(b)x)\beta(a)\\
&-\beta(b)\ast_B(s_A(x)a)-r_A(s_B(a)x)\beta(b),
\end{split}
\\
\label{eq_matched_pre_5}
\begin{split}
(\rho_A(x)a)\cdot_B\beta(b)-s_A(\rho_B(a)x)\beta(b)&=l_{\ast_A}(\alpha(x))(a\cdot_B b) \\
&-\beta(a)\ast_B(l_{\ast_A}(x)b)-r_{\ast_A}(s_B(b)x)\beta(a),
\end{split}
\\
\label{eq_matched_pre_6}
r_{\ast_B}(\beta(a))(ax\cdot_A y)&=\alpha(x)\cdot_A(r_{\ast_B}(a)y)+s_B(l_{\ast_A}(y)a)\alpha(x),
\\
\label{eq_matched_pre_7}
(s_B(a)x)\ast_A\alpha(y)+l_{\ast_B}(s_A(x)a)\alpha(y)&=s_B(\beta(a))(x\ast_A y),
\\
\label{eq_matched_pre_8}
l_{\ast_B}(s_A(x)a)\alpha(y)+(s_B(a)x)\ast_A\alpha(y)&=\alpha(x)\cdot_A(l_{\ast_B}(a)y)+s_B(r_A(y)a)\alpha(x),
\\
\label{eq_matched_pre_9}
\begin{split}
s_{B}(\beta(a))(\{x\ast_A y\}_A)&=\alpha(x)\ast_A(s_B(a)y)+r_{\ast_B}(s_A(y)a)\alpha(x)\\
&- \alpha(y)\ast_A(s_B(a)x)-r_B(s_A(x)a)\alpha(y),
\end{split}
\\
\label{eq_matched_pre_10}
\begin{split}
(\rho_B(a)x)\cdot_A\alpha(x)-s_B(\rho_A(x)a)\alpha(y)&=l_{\ast_B}(\beta(a))(x\cdot_A y)\\
&-\alpha(x)\ast_A(l_{\ast_B}(a)y)-r_{\ast_B}(s_A(y)a)\alpha(x).
\end{split}
\end{align}
Then,
 $(A, B, s_A,l_{\ast_{A}}, r_{\ast_{A}}, \beta, s_B, l_{\ast_{B}}, r_{\ast_{B}},\alpha)$ is called a matched pair of Hom-pre-Lie Poisson algebras.
 In this case, there exists a  Hom-pre-Lie Poisson algebra structure on the direct sum $ A \oplus B $
            \end{thm}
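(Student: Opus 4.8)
The plan is to equip the direct sum $A\oplus B$ with the three operations dictated by the two marginal matched-pair structures, namely
\begin{align*}
(x+a)\cdot(y+b) &= x\cdot_A y + s_B(a)y + s_B(b)x + a\cdot_B b + s_A(x)b + s_A(y)a,\\
(x+a)\ast(y+b) &= x\ast_A y + l_{\ast_B}(a)y + r_{\ast_B}(b)x + a\ast_B b + l_{\ast_A}(x)b + r_{\ast_A}(y)a,\\
(\alpha\oplus\beta)(x+a) &= \alpha(x)+\beta(a),
\end{align*}
and then to check that $(A\oplus B,\cdot,\ast,\alpha\oplus\beta)$ satisfies Definition \ref{def pre-poiss}. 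First I would dispose of the two ``marginal'' axioms. Since $A\bowtie^{s_A,\beta}_{s_B,\alpha}B$ is assumed to be a matched pair of commutative Hom-associative algebras, Theorem \ref{matched ass} gives that $(A\oplus B,\cdot,\alpha\oplus\beta)$ is a commutative Hom-associative algebra. Since $A\bowtie^{l_{\ast_A},r_{\ast_A},\beta}_{l_{\ast_B},r_{\ast_B},\alpha}B$ is assumed to be a matched pair of Hom-pre-Lie algebras, the corresponding matched-pair theorem for Hom-pre-Lie algebras yields that $(A\oplus B,\ast,\alpha\oplus\beta)$ is a Hom-pre-Lie algebra. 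Thus everything reduces to verifying the two compatibility identities \eqref{eq:pre-Poisson 1} and \eqref{eq:pre-Poisson 2} on $A\oplus B$.

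For \eqref{eq:pre-Poisson 1}, I would take arbitrary $x,y,z\in A$ and $a,b,c\in B$ and expand both $\bigl((x+a)\cdot(y+b)\bigr)\ast(\alpha\oplus\beta)(z+c)$ and $(\alpha\oplus\beta)(x+a)\cdot\bigl((y+b)\ast(z+c)\bigr)$ using the three operations above, collecting each result into its $A$-valued part and its $B$-valued part. The purely diagonal contributions reduce to $(x\cdot_A y)\ast_A\alpha(z)=\alpha(x)\cdot_A(y\ast_A z)$, which is \eqref{eq:pre-Poisson 1} in $A$, and to the same identity in $B$. The remaining cross terms separate according to the space in which they lie: the $B$-valued mixed terms are matched and cancelled using \eqref{eq_matched_pre_1}, \eqref{eq_matched_pre_2} and \eqref{eq_matched_pre_3}, while the $A$-valued mixed terms are matched using \eqref{eq_matched_pre_6}, \eqref{eq_matched_pre_7} and \eqref{eq_matched_pre_8}, together with the Hom-module conditions \eqref{Cond2} for $s_A$ and $s_B$ and the defining conditions of the two bimodules.

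I would handle \eqref{eq:pre-Poisson 2} in exactly the same way, expanding $\bigl((x+a)\ast(y+b)\bigr)\cdot(\alpha\oplus\beta)(z+c)-\bigl((y+b)\ast(x+a)\bigr)\cdot(\alpha\oplus\beta)(z+c)$ and $(\alpha\oplus\beta)(x+a)\ast\bigl((y+b)\cdot(z+c)\bigr)-(\alpha\oplus\beta)(y+b)\ast\bigl((x+a)\cdot(z+c)\bigr)$, splitting into $A$- and $B$-components, and matching. Here the diagonal contributions give \eqref{eq:pre-Poisson 2} in $A$ and in $B$, while after substituting $\rho_A=l_{\ast_A}-r_{\ast_A}$ and $\rho_B=l_{\ast_B}-r_{\ast_B}$ the mixed terms are absorbed by the remaining four identities: \eqref{eq_matched_pre_4} and \eqref{eq_matched_pre_5} for the $B$-valued part, and \eqref{eq_matched_pre_9} and \eqref{eq_matched_pre_10} for the $A$-valued part.

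The main obstacle is purely organizational. Each of the two identities expands into on the order of two dozen monomials in the structure maps, and the real work lies in sorting them correctly by their target space and pairing each cross monomial with exactly the right one of \eqref{eq_matched_pre_1}--\eqref{eq_matched_pre_10}. No idea beyond the matched-pair bookkeeping already used in Theorem \ref{matched ass} and in the Hom-pre-Lie matched-pair theorem is required, since the ten compatibility equations were designed precisely so that every mixed term cancels.
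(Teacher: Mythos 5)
Your proposal is correct and is essentially the paper's own argument: the paper's proof of this theorem consists of the single remark that it is obtained in a similar way as for Theorem \ref{matched ass}, which is exactly your plan of invoking the two marginal matched-pair theorems (for commutative Hom-associative and for Hom-pre-Lie algebras) and then verifying the compatibility identities \eqref{eq:pre-Poisson 1} and \eqref{eq:pre-Poisson 2} on $A\oplus B$ by expansion, separation into $A$- and $B$-valued parts, and cancellation of the mixed terms against the ten hypotheses. Your write-up simply makes explicit the bookkeeping that the paper leaves implicit.
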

\begin{proof}
The proof is obtained in a similar way as for Theorem \ref{matched ass}.
\end{proof}

Let $ A \bowtie^{s_A,l_{\ast_{A}}, r_{\ast_{A}}, \beta}_{s_B,l_{\ast_{B}},
r_{\ast_{B}}, \alpha} B $ denote this Hom-pre-Lie Poisson algebra.

\begin{cor}
Let $(A, B, s_A,l_{\ast_{A}}, r_{\ast_{A}}, \beta,
 s_B,l_{\ast_{B}}, r_{\ast_{B}}, \alpha) $ be a matched pair of Hom-pre-Lie Poisson algebras $ (A, \cdot_A,\ast_{A}, \alpha) $ and $  (B, \cdot_B,\ast_{B}, \beta) $.

Then, $(A, B,s_A ,l_{\ast_{A}}-r_{\ast_{A}},\beta,
s_B,l_{\ast_{B}}-r_{\ast_{B}},\alpha )$ is a matched pair of the associated
transposed Hom-Poisson algebras $(A,\cdot_{A},\{\cdot,\cdot\}_A, \alpha)$ and $(B, \cdot_{B},\{\cdot,\cdot\}_B,\beta)$.
\end{cor}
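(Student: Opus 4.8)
The plan is to verify, for the data $(A,B,s_A,l_{\ast_{A}}-r_{\ast_{A}},\beta,s_B,l_{\ast_{B}}-r_{\ast_{B}},\alpha)$, the three requirements in the definition of a matched pair of transposed Hom-Poisson algebras, reducing each to a result or hypothesis already at our disposal. Writing $\rho_A=l_{\ast_{A}}-r_{\ast_{A}}$ and $\rho_B=l_{\ast_{B}}-r_{\ast_{B}}$, the target is that $A\bowtie^{\rho_A,\beta}_{\rho_B,\alpha}B$ be a matched pair of Hom-Lie algebras, that $A\bowtie^{s_A,\beta}_{s_B,\alpha}B$ be a matched pair of commutative Hom-associative algebras, and that the mixed compatibility conditions \eqref{101}--\eqref{104} hold.

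First I would unpack the hypothesis. By the definition of a matched pair of Hom-pre-Lie Poisson algebras, $A\bowtie^{l_{\ast_A},r_{\ast_A},\beta}_{l_{\ast_B},r_{\ast_B},\alpha}B$ is a matched pair of Hom-pre-Lie algebras and $A\bowtie^{s_A,\beta}_{s_B,\alpha}B$ is a matched pair of commutative Hom-associative algebras. Applying Proposition~\ref{Matchedd1} to the former yields at once that $(A,B,l_{\ast_A}-r_{\ast_A},\beta,l_{\ast_B}-r_{\ast_B},\alpha)$ is a matched pair of the sub-adjacent Hom-Lie algebras $(A,\{\cdot,\cdot\}_A,\alpha)$ and $(B,\{\cdot,\cdot\}_B,\beta)$; in particular $\rho_A,\rho_B$ are honest Hom-Lie representations by Proposition~\ref{propa1} and the Hom-Lie matched pair identities \eqref{Lie1}--\eqref{Lie2} are satisfied. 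The second matched pair is supplied verbatim by the hypothesis. Hence the first two defining conditions are already established, and only the four equations \eqref{101}--\eqref{104} remain.

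For the remaining conditions the key idea is that \eqref{101}--\eqref{104} arise from the ten Hom-pre-Lie Poisson matched pair equations \eqref{eq_matched_pre_1}--\eqref{eq_matched_pre_10} by exactly the linear combinations that, at the level of a single bimodule, carry the axioms \eqref{3,4}--\eqref{3,8} to the transposed Hom-Poisson representation axioms \eqref{ismail.1}--\eqref{ismail.2} in Corollary~\ref{corooo}. Concretely, to produce \eqref{101} I would take \eqref{eq_matched_pre_4} together with its $a\leftrightarrow b$ counterpart so that $\{a,b\}_B=a\ast_B b-b\ast_B a$ appears, then eliminate the surviving $r_\ast$-terms using \eqref{eq_matched_pre_2}--\eqref{eq_matched_pre_3} and collapse $l_\ast,r_\ast$ into $\rho=l_\ast-r_\ast$ throughout; \eqref{102} is assembled the same way from \eqref{eq_matched_pre_2}, \eqref{eq_matched_pre_3} and \eqref{eq_matched_pre_5}, and by the symmetry of the roles of $A$ and $B$ the identities \eqref{103}--\eqref{104} follow from \eqref{eq_matched_pre_7}--\eqref{eq_matched_pre_10}. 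This is precisely the ``similar way'' the proof of Theorem~\ref{matched ass} refers to.

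Alternatively, one may bypass the bookkeeping by a structural argument: Proposition~\ref{ass1}, Proposition~\ref{bimodpre-Lie} and the two matched pair theorems already make $A\oplus B$ a Hom-pre-Lie Poisson algebra $A\bowtie^{s_A,l_{\ast_A},r_{\ast_A},\beta}_{s_B,l_{\ast_B},r_{\ast_B},\alpha}B$, and the theorem passing from a Hom-pre-Lie Poisson algebra to its sub-adjacent transposed Hom-Poisson algebra turns the commutator $\ast$ on $A\oplus B$ into a transposed Hom-Poisson bracket; expanding this commutator shows it is exactly the bracket of the claimed matched pair, with actions $l_{\ast_\bullet}-r_{\ast_\bullet}$, which forces \eqref{101}--\eqref{104}. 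I expect the main obstacle to lie in the first, computational route, namely tracking signs and the $a\leftrightarrow b$ (resp. $x\leftrightarrow y$) swaps when several of the ten equations are paired and their $l_\ast,r_\ast$ are merged into $\rho$; the structural route sidesteps this but relies on reading the matched pair conditions as equivalent to $A\oplus B$ being a transposed Hom-Poisson algebra with $A,B$ as subalgebras, which is the converse direction implicit in the matched pair theorem.
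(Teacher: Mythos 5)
Your proposal is correct and takes essentially the same route as the paper: the commutative Hom-associative matched pair is read off directly from the hypothesis, the Hom-Lie matched pair comes from Proposition~\ref{Matchedd1}, and the remaining conditions \eqref{101}--\eqref{104} are identified (via Corollary~\ref{corooo} and the collapse of $l_{\ast},r_{\ast}$ into $\rho=l_{\ast}-r_{\ast}$) with the Hom-pre-Lie Poisson matched pair identities \eqref{eq_matched_pre_1}--\eqref{eq_matched_pre_10}. Your explicit bookkeeping of which of the ten equations combine to give each of \eqref{101}--\eqref{104}, and the alternative structural argument, merely flesh out the equivalence the paper asserts in one line.
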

\begin{proof}
Let $(A, B, s_{A},l_{\ast_{A}}, r_{\ast_{A}}, \beta,
 s_{B},l_{\ast_{B}}, r_{\ast_{B}}, \alpha) $ be a matched pair of Hom-pre-Lie Poisson algebras $ (A, \cdot_A,\ast_{A}, \alpha) $ and $  (B, \cdot_{B},\ast_{B}, \beta) $. Then, $(A, B, s_{A}, \beta,
 s_{B}, \alpha) $ is a matched pair of commutative Hom-associative algebras $ (A, \cdot_A, \alpha) $ and $  (B, \cdot_{B}, \beta)$, and by Proposition \ref{Matchedd1}, $(A, B,  l_{\ast_{A}}-r_{\ast_{A}},\beta,
l_{\ast_{B}}-r_{\ast_{B}},\alpha )$ is a matched pair of the associated Hom-Lie algebras $(A,\{\cdot,\cdot\}_A, \alpha)$ and $(B,\{\cdot,\cdot\}_B,\beta)$. Besides, by Corollary \ref{corooo}, the linear maps $s_{A},l_{\ast_A},r_{\ast_A}:A\rightarrow gl(B)$ and $s_{B},l_{\ast_B},r_{\ast_B}:B\rightarrow gl(A)$ are representations of the underlying transposed Hom-Poisson algebras  $(B,\cdot_B,\{\cdot,\cdot\}_B,\beta)$ and $(A,\cdot_A,\{\cdot,\cdot\}_A, \alpha)$ respectively. Therefore, \eqref{101}-\eqref{102} are
equivalent to \eqref{eq_matched_pre_1}-\eqref{eq_matched_pre_5}, and \eqref{103}-\eqref{104} are
equivalent to \eqref{eq_matched_pre_6}-\eqref{eq_matched_pre_10}.
\end{proof}
%%%%%%%%%%%%%%%%%%%%%%%%%%%%%%%%%%%%%%%%%%%%%%%%%%%
\section{\texorpdfstring{$\mathcal{O}$}{}-operators of transposed Hom-Poisson algebras}\label{sec5}
%%%%%%%%%%%%%%%%%%%%%%%%%%%%%%%%%%%%%%%%%%%%%%%%
In this section, we introduce and study the notion of an $\mathcal{O}$-operator of transposed Hom-Poisson algebras generalizing the notion of Rota-Baxter operators. Their properties and relationship with
Hom-pre-Lie Poisson algebras is also described.

\begin{defn}
Let $(A, \cdot, \alpha)$ be a commutative Hom-associative algebra and $(s, \beta, V)$ be a bimodule. Then, a  linear map $ T : V \rightarrow A $
is called an $ \mathcal{O} $-operator associated to $(s, \beta, V)$,  if $ T $ satisfies
for all $u, v \in V,$
\begin{align}
&\alpha T= T\beta, \label{morphismm} \\
&T(u)\cdot T(v) = T(s(T(u))v + s(T(v))u).
\end{align}
\end{defn}
\begin{lem}\label{lem silv double}
Let $(A, \cdot, \alpha)$ be a commutative Hom-associative algebra, and $(l, r, \beta, V) $ be a bimodule.
Let $ T : V \rightarrow A $ be an $ \mathcal{O}$-operator associated to $(s, \beta, V)$. Then, there exists a commutative Hom-associative algebra structure on $ V $ given for all $u, v \in V$ by
\begin{eqnarray*}
 u \diamond v = s(T(u))v + s(T(v))u.
\end{eqnarray*}
\end{lem}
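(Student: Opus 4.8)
The plan is to verify the two defining axioms of a commutative Hom-associative algebra for the triple $(V, \diamond, \beta)$: commutativity and Hom-associativity. Bilinearity of $\diamond$ is clear since $s$ and $T$ are linear, and $(V,\beta)$ is already a Hom-module, so $(V,\diamond,\beta)$ is a Hom-algebra. Commutativity is immediate from the symmetric shape of the product: $u \diamond v = s(T(u))v + s(T(v))u$ visibly equals $v \diamond u$, so nothing needs to be checked there.

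The substance of the argument lies in the Hom-associativity identity $(u \diamond v) \diamond \beta(w) = \beta(u) \diamond (v \diamond w)$. Two structural facts will be used repeatedly. The first is the $\mathcal{O}$-operator condition rewritten as $T(u \diamond v) = T(u)\cdot T(v)$, which transports the product $\diamond$ on $V$ to the product $\cdot$ on $A$ along $T$. The second is the relation $\alpha T = T\beta$ from \eqref{morphismm}, so that $T(\beta(w)) = \alpha(T(w))$. Expanding the left-hand side $(u\diamond v)\diamond\beta(w) = s(T(u\diamond v))\beta(w) + s(T(\beta(w)))(u\diamond v)$ and substituting these two facts, together with the bimodule axiom \eqref{Cond1} in the form $s(x\cdot y)\beta(v) = s(\alpha(x))s(y)v$, produces three summands. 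Expanding the right-hand side in the same way yields three summands as well, and one of them, namely $s(\alpha(T(u)))s(T(v))w$, coincides immediately with a summand on the left.

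To match the two remaining summands on each side, I would apply \eqref{Cond1} in the reverse direction, converting each expression of the form $s(\alpha(T(a)))s(T(b))c$ back into $s(T(a)\cdot T(b))\beta(c)$, and then invoke the commutativity of $\cdot$ on $A$, namely $T(a)\cdot T(b) = T(b)\cdot T(a)$, to identify the leftover terms pairwise. This is the only step requiring a genuine computation, and it is the precise point where commutativity of the ambient algebra $(A,\cdot,\alpha)$ is indispensable; everything else is routine bookkeeping with the $\mathcal{O}$-operator and bimodule relations. Once these two identifications are made, the two sides agree term by term, which establishes Hom-associativity and hence that $(V,\diamond,\beta)$ is a commutative Hom-associative algebra.
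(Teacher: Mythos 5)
Your proof is correct and follows essentially the same route as the paper's: both expand $(u\diamond v)\diamond\beta(w)$ and $\beta(u)\diamond(v\diamond w)$ using $T(u\diamond v)=T(u)\cdot T(v)$ and $\alpha T=T\beta$, normalize the resulting summands via the bimodule axiom \eqref{Cond1}, and match the leftover cross terms by the commutativity of $\cdot$ on $A$. The only cosmetic difference is that the paper converts all six summands to the form $s\bigl(T(a)\cdot T(b)\bigr)\beta(c)$ before comparing, while you leave one matched pair in the form $s(\alpha(T(a)))s(T(b))c$; the ingredients and the key step are identical.
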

\begin{proof}
First, we prove the commutativity condition.
For all $u, v \in V$ , $$u \diamond v = s(T(u))v+s(T(v))u = s(T(v))u + s(T(u))v = v \diamond u.$$

Now, we prove the Hom-associativity condition. For all $u,v,w \in V$,
\begin{align*}
   & (u\diamond v)\diamond\beta(w)=(s(T(u))v+s(T(v))u)\diamond\beta(w)\\
    &\quad =s\Big[T(s(T(u))v+s(T(v))u)\Big]\beta(w)+s(T(\beta(w)))(s(T(u))v+s(T(v))u)\\
    &\quad =s(T(u)\cdot T(v))\beta(w)+s(\alpha T(w))s(T(u))v+s(\alpha T(w))s(T(v))u\\
    &\quad =s(T(u)\cdot T(v))\beta(w)+s(T(w)\cdot T(u))\beta(v)+s(T(w)\cdot T(v))\beta(u),~~(\textsl{by}~\eqref{Cond1})\\
   &\beta(u)\diamond(v\diamond w)=\beta(u)\diamond (s(T(v))w+s(T(w))v)\\
    &\quad=s(\alpha(T(u))(s(T(v))w+s(T(w))v)+s\Big[T(s(T(v))w+s(T(w))v)\Big]\beta(u)\\
    &\quad=s(\alpha T(u))s(T(v))w+s(\alpha T(u))s(T(w))v+s(T(v)\cdot T(w))\beta(u)\\
    &\quad=s(T(u)\cdot T(v))\beta(w)+s(T(u)\cdot T(w))\beta(v)+s(T(v)\cdot T(w))\beta(u).
    \quad \mbox{(by \eqref{Cond1})}
\end{align*}
Then, $(u\diamond v)\diamond \beta(w)= \beta(u)\diamond(v\diamond w)$.
\end{proof}
\begin{cor}
Let $(A, \cdot, \alpha)$ be a commutative Hom-associative algebra, and $(s, \beta, V) $ be a bimodule.
Let $ T : V \rightarrow A $ be an $ \mathcal{O}$-operator associated to $(s, \beta, V)$. Then, $T$ is a morphism from the commutative Hom-associative algebra $(V,\diamond,\beta)$ to $(A, \cdot, \alpha)$.
\end{cor}
\begin{defn}[\cite{ShengBai:homLiebialg}]
Let $\mathcal{A}=(A, [\cdot,\cdot], \alpha)$ be a Hom-Lie algebra, and $(\rho, \beta,V)$ be a representation of $\mathcal{A}$. Then, a  linear map $ T : V \rightarrow A $
is called an $ \mathcal{O} $-operator associated to $(\rho, \beta, V)$,  if $ T $ satisfies for all
$u, v \in V$,
\begin{align}
\alpha T &= T\beta,\\
[T(u), T(v)] &= T(\rho(T(u))v - \rho(T(v))u).
\end{align}
\end{defn}
\begin{ex}
An $\mathcal{O}$-operator on a Hom-Lie algebra $\mathcal{A}=(A,[\cdot,\cdot],\alpha)$ with respect to the adjoint representation is called a Rota-Baxter operator on $\mathcal{A}$.
\end{ex}
\begin{lem}[\cite{ShengBai:homLiebialg}] \label{lemm2}
Let $T:V\rightarrow A$ be an $\mathcal{O}$-operator on a Hom-Lie algebra $(A,[\cdot,\cdot],\alpha)$ with respect to a representation $(\rho, \beta,V)$. Define a multiplication $\ast$ on $V$ by
\begin{equation}
    u\ast v=\rho(T(u)v), \text{ for all }  u,v\in V.
\end{equation}
Then, $(V,\ast,\alpha)$ is a Hom-pre-Lie algebra.
\end{lem}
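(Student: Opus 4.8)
The plan is to verify the single defining axiom of a Hom-pre-Lie algebra, namely the Hom-pre-Lie identity \eqref{klkl}, directly for the triple $(V,\ast,\beta)$; since the multiplication $u\ast v=\rho(T(u))v$ is bilinear and $\beta$ is linear, nothing else needs checking. Concretely, I would fix $u,v,w\in V$ and unfold the four terms appearing in \eqref{klkl}. The ``associator-type'' terms expand to $(u\ast v)\ast\beta(w)=\rho\bigl(T(\rho(T(u))v)\bigr)\beta(w)$ and $(v\ast u)\ast\beta(w)=\rho\bigl(T(\rho(T(v))u)\bigr)\beta(w)$, while the two remaining terms, after using the morphism property $\alpha T=T\beta$ to rewrite $T\beta(u)=\alpha(T(u))$ and $T\beta(v)=\alpha(T(v))$, become $\beta(u)\ast(v\ast w)=\rho(\alpha(T(u)))\rho(T(v))w$ and $\beta(v)\ast(u\ast w)=\rho(\alpha(T(v)))\rho(T(u))w$.

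The key step is to collect the two terms carrying $\beta(w)$ and apply the defining relation of the $\mathcal{O}$-operator, $T(\rho(T(u))v-\rho(T(v))u)=[T(u),T(v)]$, which turns their difference into $\rho([T(u),T(v)])\beta(w)$. Then I would invoke the Hom-Lie representation identity \eqref{repLie1}, which yields
\[
\rho([T(u),T(v)])\beta(w)=\rho(\alpha(T(u)))\rho(T(v))w-\rho(\alpha(T(v)))\rho(T(u))w.
\]
Substituting this back, the right-hand side cancels exactly against the two ``free'' nested terms $\rho(\alpha(T(u)))\rho(T(v))w$ and $\rho(\alpha(T(v)))\rho(T(u))w$, with the signs dictated by \eqref{klkl}, so the entire expression collapses to zero and \eqref{klkl} holds.

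There is no deep obstacle here: the argument is essentially a one-line bookkeeping computation once the two tools are in place. The only points requiring care are (i) applying $\alpha T=T\beta$ so that $\rho(T\beta(u))$ becomes $\rho(\alpha(T(u)))$, matching the shape of the right-hand side of \eqref{repLie1}, and (ii) tracking the signs in the Hom-pre-Lie identity so that the $\mathcal{O}$-operator contribution precisely annihilates the remaining nested terms. I also note that the linear map turning $V$ into a Hom-pre-Lie algebra is $\beta$ (the twisting map carried by $V$) rather than $\alpha$, which lives on $A$.
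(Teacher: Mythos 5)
Your proof is correct. Note first that the paper itself gives no proof of this lemma: it is quoted from the reference [ShengBai:homLiebialg] (Sheng--Bai), so there is no in-paper argument to compare against; your direct verification is exactly the standard one used there. The computation is sound: writing the Hom-pre-Lie identity as
\begin{equation*}
(u\ast v)\ast\beta(w)-(v\ast u)\ast\beta(w)=\beta(u)\ast(v\ast w)-\beta(v)\ast(u\ast w),
\end{equation*}
the left side collapses, by linearity of $T$ and $\rho$ and the $\mathcal{O}$-operator relation $T(\rho(T(u))v-\rho(T(v))u)=[T(u),T(v)]$, to $\rho([T(u),T(v)])\beta(w)$, while the right side, after rewriting $T\beta(u)=\alpha(T(u))$ and $T\beta(v)=\alpha(T(v))$ via $\alpha T=T\beta$, becomes $\rho(\alpha(T(u)))\rho(T(v))w-\rho(\alpha(T(v)))\rho(T(u))w$; equality of the two sides is precisely the representation identity \eqref{repLie1} evaluated at $x=T(u)$, $y=T(v)$. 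You also correctly flag two slips in the statement as printed: the product should read $u\ast v=\rho(T(u))v$ rather than $\rho(T(u)v)$, and the structure map of the resulting Hom-pre-Lie algebra on $V$ must be $\beta$ (the twist carried by $V$), not $\alpha$, which acts on $A$; both are typos inherited in the paper's formulation.
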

\begin{cor}
Let $T:V\rightarrow A$ be an $\mathcal{O}$-operator on a Hom-Lie algebra $(A,[\cdot,\cdot],\alpha)$ with respect to a representation $(\rho, \beta,V)$. Then, $T$ is a morphism from the Hom-Lie algebra
$(V, [\cdot,\cdot]_C, \beta)$ to $(A,[\cdot,\cdot],\alpha)$,
where $[u,v]_C=u\ast v-v\ast u,~$ for all $u,v\in V.$
\end{cor}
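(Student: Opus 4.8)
The plan is to verify directly the two defining properties of a morphism of Hom-Lie algebras for the map $T\colon (V,[\cdot,\cdot]_C,\beta)\to (A,[\cdot,\cdot],\alpha)$: compatibility with the structure maps and preservation of the brackets. The first property, namely $T\circ\beta=\alpha\circ T$, is nothing but the first defining relation of an $\mathcal{O}$-operator, so it requires no further work and can simply be quoted from the definition.

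For bracket preservation, I would begin by writing out the sub-adjacent bracket on $V$ explicitly. Applying Lemma~\ref{lem1} to the Hom-pre-Lie algebra $(V,\ast,\beta)$ produced in Lemma~\ref{lemm2}, the bracket is
\[
[u,v]_C=u\ast v-v\ast u=\rho(T(u))v-\rho(T(v))u,
\]
for all $u,v\in V$, where the second equality uses the explicit product $u\ast v=\rho(T(u))v$ from Lemma~\ref{lemm2} rather than the abstract symbol $\ast$. Applying $T$ and invoking the second $\mathcal{O}$-operator relation then gives
\[
T([u,v]_C)=T\big(\rho(T(u))v-\rho(T(v))u\big)=[T(u),T(v)],
\]
which is precisely the bracket-preservation condition. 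Together with $T\circ\beta=\alpha\circ T$, this shows that $T$ is a morphism of Hom-Lie algebras.

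The argument is essentially immediate once $[\cdot,\cdot]_C$ is unfolded in terms of $\rho$ and $T$, and so there is no genuine obstacle: both morphism conditions reduce term-by-term to the two axioms of an $\mathcal{O}$-operator. The only point requiring care is the bookkeeping step of substituting the concrete definition $u\ast v=\rho(T(u))v$ so that the expression inside $T$ matches exactly the right-hand side of the defining $\mathcal{O}$-operator identity $[T(u),T(v)]=T(\rho(T(u))v-\rho(T(v))u)$.
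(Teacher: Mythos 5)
Your proof is correct and coincides with the argument the paper intends: the corollary appears there without an explicit proof, precisely because it is the immediate consequence you describe of the two $\mathcal{O}$-operator axioms $\alpha\circ T=T\circ\beta$ and $[T(u),T(v)]=T\big(\rho(T(u))v-\rho(T(v))u\big)$, once $[u,v]_C=u\ast v-v\ast u=\rho(T(u))v-\rho(T(v))u$ is unfolded. Your reading of the sub-adjacent structure as the Hom-Lie algebra $(V,[\cdot,\cdot]_C,\beta)$ (rather than the twist map $\alpha$ misprinted in the paper's Lemma on the induced Hom-pre-Lie product) is also the correct one.
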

\begin{defn}
Let $(A,\cdot, \{\cdot,\cdot\}, \alpha)$ be a transposed Hom-Poisson algebra, and let $(s,\rho, \beta,V)$ be a representation of $(A,\cdot, \{\cdot,\cdot\}, \alpha)$.
A linear operator $T:V\rightarrow A$ is called an $\mathcal{O}$-operator on $A$ if $T$ is both an $\mathcal{O}$-operator on the commutative Hom-associative algebra $(A,\cdot,\alpha)$ and an $\mathcal{O}$-operator on the Hom-Lie algebra $(A,\{\cdot,\cdot\},\alpha)$.
\end{defn}
\begin{ex}
An $\mathcal{O}$-operator on a transposed Hom-Poisson algebra $\mathcal{A}=(A,\cdot,\{\cdot,\cdot\},\alpha)$ with respect the regular bimodule and the adjoint representation is called a Rota-Baxter operator on $\mathcal{A}$.
\end{ex}
\begin{thm}
Assume that $\mathcal{A}=(A,\cdot,\{\cdot,\cdot\},\alpha)$ is a transposed Hom-Poisson algebra, and $T:V\rightarrow A$ is an $\mathcal{O}$-operator on $\mathcal{A}$ with respect to the representation
$(s,\rho,\beta,V)$. Define new operations $\diamond$ and $\ast$ on $V$ by \begin{equation}\label{lara2}u\diamond v=s(T(u))v+s(T(v))u,~u\ast v=\rho(T(u))v.\end{equation}
Then, $(V,\diamond,\ast,\alpha)$ is a Hom-pre-Lie Poisson algebra.
Therefore $V$ is a transposed Hom-Poisson algebra
 as the sub-adjacent transposed Hom-Poisson algebra of this Hom-pre-Lie Poisson algebra and $T$ is a homomorphism of transposed Hom-Poisson algebras.
 Furthermore, $T(V)=\{T(v)\colon v\in V\}\subset A$ is a subalgebra of $\mathcal{A}$ and there is an induced Hom-pre-Lie Poisson algebra structure on $T(V)$ given for all $u,v\in V$ by
\begin{equation}\label{lara1}
T(u)\diamond' T(v)=T(u\diamond v),~~T(u)\ast' T(v)=T(u\ast v).\end{equation}
Moreover, $T$ is a homomorphism of Hom-pre-Lie Poisson algebras.
\end{thm}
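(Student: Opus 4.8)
The plan is to assemble the Hom-pre-Lie Poisson structure on $V$ from its two constituent products, verify the compatibility between them, and then transport everything forward along $T$.

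First I would treat the two products separately. Since $T$ is an $\mathcal{O}$-operator on the commutative Hom-associative algebra $(A,\cdot,\alpha)$ with respect to $(s,\beta,V)$, Lemma~\ref{lem silv double} immediately shows that $(V,\diamond,\beta)$, with $u\diamond v=s(T(u))v+s(T(v))u$, is a commutative Hom-associative algebra. Dually, since $T$ is an $\mathcal{O}$-operator on the Hom-Lie algebra $(A,\{\cdot,\cdot\},\alpha)$ with respect to $(\rho,\beta,V)$, Lemma~\ref{lemm2} shows that $(V,\ast,\beta)$, with $u\ast v=\rho(T(u))v$, is a Hom-pre-Lie algebra. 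Thus the commutative-associative and Hom-pre-Lie requirements of Definition~\ref{def pre-poiss} are already in hand, and the only remaining content is the two compatibility identities \eqref{eq:pre-Poisson 1} and \eqref{eq:pre-Poisson 2}.

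The heart of the argument, and the step I expect to be the main obstacle, is verifying these two compatibilities. Here I would expand each side using the definitions in \eqref{lara2} together with the identity $T(u\diamond v)=T(u)\cdot T(v)$ coming from the commutative $\mathcal{O}$-operator relation, and then reduce everything to the representation axioms \eqref{ismail.1}--\eqref{ismail.2} of the transposed Hom-Poisson algebra, also calling on the bimodule and representation conditions \eqref{Cond1}, \eqref{repLie1}, \eqref{ismail.0}. For instance, the left-hand side of \eqref{eq:pre-Poisson 1} becomes $\rho(T(u)\cdot T(v))\beta(w)$, which \eqref{ismail.2} rewrites through $s(\alpha T(u))\rho(T(v))w$ and $\rho(\alpha T(v))s(T(u))w$; the delicate point is to match the resulting cross terms, which mingle $s$ and $\rho$ through $T$, against the expansion of $\beta(u)\diamond(v\ast w)$, and analogously for \eqref{eq:pre-Poisson 2} via \eqref{ismail.1}. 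Careful bookkeeping of these mixed terms, rather than any single clever trick, is what makes this step the technical core, and it is where one must be most attentive to how the $\mathcal{O}$-operator condition interacts with the two representation laws.

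Granting the compatibilities, $(V,\diamond,\ast,\beta)$ is a Hom-pre-Lie Poisson algebra, and the remaining claims follow quickly. Setting $\{u,v\}_{C}=u\ast v-v\ast u$, the sub-adjacent construction for Hom-pre-Lie Poisson algebras turns $(V,\diamond,\{\cdot,\cdot\}_{C},\beta)$ into a transposed Hom-Poisson algebra. To see that $T$ is a morphism, I would combine $T(u\diamond v)=T(u)\cdot T(v)$ with the Hom-Lie $\mathcal{O}$-operator relation, which gives $T(\{u,v\}_{C})=T(\rho(T(u))v-\rho(T(v))u)=\{T(u),T(v)\}$, and with the intertwining $\alpha T=T\beta$ from \eqref{morphismm}; hence $T\colon(V,\diamond,\{\cdot,\cdot\}_{C},\beta)\to(A,\cdot,\{\cdot,\cdot\},\alpha)$ is a homomorphism of transposed Hom-Poisson algebras. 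These same three identities show that $T(V)$ is closed under $\cdot$, $\{\cdot,\cdot\}$ and $\alpha$, so it is a subalgebra; transporting $\diamond$ and $\ast$ along the surjection $T$ via \eqref{lara1} then equips $T(V)$ with the induced Hom-pre-Lie Poisson structure, with $T$ a surjective homomorphism by construction, and here $\diamond'$ coincides with the restriction of $\cdot$, which makes the induced commutative product manifestly well defined.
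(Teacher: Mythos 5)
Your proposal follows the paper's own proof essentially step for step: both first invoke Lemma~\ref{lem silv double} and Lemma~\ref{lemm2} to get the commutative Hom-associative structure $(V,\diamond,\beta)$ and the Hom-pre-Lie structure $(V,\ast,\beta)$, then verify \eqref{eq:pre-Poisson 1} and \eqref{eq:pre-Poisson 2} by expanding both sides through \eqref{lara2} and reducing to \eqref{morphismm}, \eqref{ismail.1} and \eqref{ismail.2}, and finally read off that $T$ is a homomorphism directly from the two $\mathcal{O}$-operator identities, treating the claims about $T(V)$ as immediate consequences. The differences are cosmetic: you sketch the cross-term matching that the paper writes out (and then also only asserts), and you add a short well-definedness remark for the induced product on $T(V)$ that the paper covers with ``the other conclusions follow immediately.''
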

\begin{proof}
By Lemma \ref{lem silv double} and Lemma \ref{lemm2}, we deduce that $(A,\cdot,\alpha)$ is a commutative Hom-associative algebra and $(A,\ast,\alpha)$ is a Hom-pre-Lie algebra. Next, we prove the  axiom
\eqref{eq:pre-Poisson 1}.
For any $u,v,w\in V$,
\begin{align}
    (u\diamond v)\ast\beta(w)=&(s(T(u))v+s(T(v))u)\ast\beta(w)\cr
    =&\rho\Big(T(s(T(u))v+s(T(v))u)\Big)\beta(w)\cr
    =&\rho(T(u)\cdot T(v))\beta(w),\cr
    \beta(u)\diamond(v\ast w)=&\beta(u)\diamond(\rho(T(v))w)\cr
    =&s(T(\beta(u)))\rho(T(v))w+s(T(\rho(T(v))w))\beta(u)\nonumber,
\end{align}
Hence, we get \eqref{eq:pre-Poisson 1} by \eqref{morphismm}, \eqref{ismail.1} and \eqref{ismail.2}.
Now, we prove the axiom \eqref{eq:pre-Poisson 2},
 \begin{align*}
     (u\ast v)\diamond\beta(w)-(v\ast u)\diamond\beta(w)&=(\rho(T(u))v)\diamond\beta(w)-(\rho(T(v))u)\diamond\beta(w)\\
&=s(T(\rho(T(u))v)\beta(w)+s(T(\beta(w)))\rho(T(u))v\\
&-s(T(\rho(T(v))u)\beta(w)+s(T(\beta(w)))\rho(T(v))u\\
&=s(T(\rho(T(u))v-T(\rho(T(v))u)\beta(w)\\
&+s(T(\beta(w))\rho(T(u))v-s(T(\beta(w))\rho(T(v))u\\
&=s(\{T(u),T(v)\})\beta(w)+s(T(\beta(w))\rho(T(u))v\\&-s(T(\beta(w))\rho(T(v))u,\\
\beta(u)\ast(v\diamond w)-\beta(v)\ast(u\diamond w)&=\beta(u)\ast(s(T(v))w+s(T(w))v)\\
&-\beta(v)\ast(s(T(u))w+s(T(w))u)\\
&=\rho(T(\beta(u)))(s(T(v))w+s(T(w))v)\\
&-\rho(T(\beta(v)))(s(T(u))w+s(T(w))u)\\
&=\rho(T(\beta(u)))s(T(v))w+\rho(T(\beta(u)))s(T(w))v\\
&-\rho(T(\beta(v)))s(T(u))w+\rho(T(\beta(v)))s(T(w))u.
 \end{align*}
 Hence, we get \eqref{eq:pre-Poisson 2} by \eqref{morphismm}, \eqref{ismail.1} and \eqref{ismail.2}.

On the other hand,
\begin{align*}
T(\{u,v\}_C)=&T(u\ast v-v\ast u)
=T(\rho(T(u))v-\rho(T(v))u)=\{T(u),T(v)\},\\
T(u\diamond v)=&T(s(T(u))v+s(T(v))u)=T(u)\cdot T(v),
\end{align*}
which implies that $T$ is a homomorphism of transposed Hom-Poisson algebras. The other conclusions follow immediately.
\end{proof}
\begin{cor}
Let $(A,\cdot,\{\cdot,\cdot\},\alpha)$ be a transposed Hom-Poisson algebra. There is a Hom-pre-Lie Poisson algebra structure on $(A,\cdot,\{\cdot,\cdot\},\alpha)$ such that its sub-adjacent transposed Hom-Poisson algebra is exactly $(A,\cdot,\{\cdot,\cdot\},\alpha)$ if and only if there exists an invertible $\mathcal{O}$-operator on $(A,\cdot,\{\cdot,\cdot\},\alpha).$
\end{cor}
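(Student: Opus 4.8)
The statement is an equivalence, and I would prove the two implications by different means: the ``if'' part is a direct application of the theorem preceding this corollary (which manufactures a Hom-pre-Lie Poisson structure out of an $\mathcal{O}$-operator), while the ``only if'' part is handled by exhibiting the identity map as an $\mathcal{O}$-operator.

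For the ``if'' direction, assume $T\colon V\to A$ is an invertible $\mathcal{O}$-operator on $(A,\cdot,\{\cdot,\cdot\},\alpha)$ with respect to a representation $(s,\rho,\beta,V)$. By the preceding theorem, $(V,\diamond,\ast,\alpha)$ is a Hom-pre-Lie Poisson algebra with $u\diamond v=s(T(u))v+s(T(v))u$ and $u\ast v=\rho(T(u))v$, and $T$ is a homomorphism from the sub-adjacent transposed Hom-Poisson algebra of $(V,\diamond,\ast,\alpha)$ to $(A,\cdot,\{\cdot,\cdot\},\alpha)$. Since $T$ is bijective it is an isomorphism, so I would push the Hom-pre-Lie Poisson structure of $V$ forward along $T$, setting $a\bullet b=T\big(T^{-1}(a)\diamond T^{-1}(b)\big)$ and $a\star b=T\big(T^{-1}(a)\ast T^{-1}(b)\big)$ for $a,b\in A$. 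The $\mathcal{O}$-operator identity for the commutative part gives $T(u\diamond v)=T(u)\cdot T(v)$, so $\bullet=\cdot$; the $\mathcal{O}$-operator identity for the Hom-Lie part gives $T(u\ast v-v\ast u)=\{T(u),T(v)\}$, so $a\star b-b\star a=\{a,b\}$. Hence $(A,\cdot,\star,\alpha)$ is a Hom-pre-Lie Poisson algebra whose sub-adjacent transposed Hom-Poisson algebra is exactly $(A,\cdot,\{\cdot,\cdot\},\alpha)$.

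For the ``only if'' direction, suppose $(A,\cdot,\ast,\alpha)$ is a Hom-pre-Lie Poisson algebra with sub-adjacent transposed Hom-Poisson algebra $(A,\cdot,\{\cdot,\cdot\},\alpha)$, so $\{x,y\}=x\ast y-y\ast x$. I would take $V=A$, $\beta=\alpha$ and $T=\mathrm{id}_A$, and seek a representation $(s,\rho,\alpha,A)$ for which $T$ is an $\mathcal{O}$-operator. For the Hom-Lie part I would use the left multiplication $\rho(x)y=x\ast y$, which is a representation of $(A,\{\cdot,\cdot\},\alpha)$ directly by the first defining axiom of the regular Hom-pre-Lie bimodule; then $[T(u),T(v)]=\{u,v\}=u\ast v-v\ast u=\rho(u)v-\rho(v)u=T\big(\rho(T(u))v-\rho(T(v))u\big)$, so $T$ is an $\mathcal{O}$-operator on the Hom-Lie algebra. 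Invertibility of $T=\mathrm{id}_A$ is immediate, and the representation axioms \eqref{ismail.1} and \eqref{ismail.2} would be verified by substituting $T=\mathrm{id}$ and rewriting them through \eqref{eq:pre-Poisson 1} and \eqref{eq:pre-Poisson 2}.

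The step I expect to be the main obstacle is the commutative Hom-associative part of the ``only if'' direction. The $\mathcal{O}$-operator identity there reads $T(u)\cdot T(v)=T\big(s(T(u))v+s(T(v))u\big)$, which \emph{symmetrizes} the bimodule action $s$; recovering the fixed, undivided product $\cdot$ therefore requires a bimodule $s$ with the correct normalization, since the naive choice $s(x)y=x\cdot y$ reproduces $\cdot$ only up to a spurious factor of $2$. Pinning down an $s$ of $(A,\cdot,\alpha)$ that both satisfies $s(x)y+s(y)x=x\cdot y$ and keeps $(s,\rho,\alpha,A)$ a genuine representation is the delicate point; by contrast the Hom-Lie side is automatic, because the bracket is already the antisymmetrization of $\ast$ and no rescaling intervenes. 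Once the commutative action is correctly identified, the remaining identities reduce to routine term-by-term verifications against \eqref{eq:pre-Poisson 1} and \eqref{eq:pre-Poisson 2}.
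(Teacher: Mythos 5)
Your ``if'' half coincides with the paper's argument (transport of the structure $\diamond$, $\ast$ from $V$ to $A$ along the invertible $T$, using the two $\mathcal{O}$-operator identities to see that the transported commutative product is $\cdot$ and the antisymmetrization of the transported pre-Lie product is $\{\cdot,\cdot\}$), and your Lie-side choice $\rho=L_\ast$ in the ``only if'' half is the correct one, since $L_\ast(u)v-L_\ast(v)u=\{u,v\}$ with no spurious factor. But the step you defer --- producing a bimodule $s$ of $(A,\cdot,\alpha)$ with $s(x)y+s(y)x=x\cdot y$ --- is not a routine verification waiting to be done; it is impossible in general, so your ``only if'' direction has a genuine, unfillable gap. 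The obstruction is structural: condition \eqref{Cond1} is linear in $s$ on the left and quadratic on the right, so if $S(x)y=x\cdot y$ is a bimodule action then a rescaling $\lambda S$ is again a bimodule only for $\lambda\in\{0,1\}$ (unless the iterated action vanishes), which excludes exactly the normalization $\lambda=\tfrac{1}{2}$ you need. Concretely, take $A=\mathbb{K}$ with $x\cdot y=xy$, $\{\cdot,\cdot\}=0$, $\alpha=\mathrm{id}$ and $\ast=\cdot$: this is a Hom-pre-Lie Poisson algebra whose sub-adjacent transposed Hom-Poisson algebra is $(\mathbb{K},\cdot,0,\mathrm{id})$, yet any invertible $\mathcal{O}$-operator would live on a one-dimensional bimodule, where \eqref{Cond1} forces $s(1)\in\{0,1\}$ while the identity $T(u)\cdot T(v)=T\bigl(s(T(u))v+s(T(v))u\bigr)$ forces $s(1)=\tfrac{1}{2}$, impossible in characteristic $0$. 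So the ``only if'' implication of the corollary itself fails, not merely your proof of it.

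For comparison, the paper's own proof of this direction simply asserts that $\mathrm{id}$ is an $\mathcal{O}$-operator with respect to ``the regular representation $(S,ad,\alpha,A)$'', and this falls into exactly the factor-of-$2$ trap you identified --- in both slots there, since $S(u)v+S(v)u=2\,u\cdot v$ and $ad(u)v-ad(v)u=2\{u,v\}$. Your $\rho=L_\ast$ repairs the Lie slot (this is how the analogous equivalence for Hom-Lie versus Hom-pre-Lie algebras is legitimately proved), but nothing can repair the commutative slot, because the paper's notion of Hom-pre-Lie Poisson algebra splits only the bracket and leaves $\cdot$ intact, whereas the $\mathcal{O}$-operator identity on the commutative side demands that $\cdot$ be the symmetrization of a bimodule action. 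A correct statement would require either weakening the definition of $\mathcal{O}$-operator on the commutative part or enlarging ``Hom-pre-Lie Poisson'' to a structure with a Zinbiel-type half-product splitting $\cdot$ as well. In short: you located the fatal point precisely; the right conclusion is that the commutative side cannot be fixed, not that a cleverer $s$ remains to be found.
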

\begin{proof}
Suppose that there exists an invertible $\mathcal{O}$-operator $T:V\rightarrow A$ associated to the representation $(l,r,\rho, V)$, then the compatible Hom-pre-Lie
Poisson algebra structure on $A$, for all $x,y\in A$ is given by
$$x\cdot y=T(s(x)T^{-1}(y)+s(y)T^{-1}(x)),\;\;x\ast y=T(\rho(x)T^{-1}(y)).$$

Conversely, let $(A,\cdot,\ast,\alpha)$ be a Hom-pre-Lie Poisson algebra, and $(A,\cdot,\{\cdot,\cdot\},\alpha)$ be the sub-adjacent transposed Hom-Poisson algebra.
Then, the identity map $id$ is an $\mathcal{O}$-operator on $(A,\cdot,\ast,\alpha)$ with respect to the regular representation $(S, ad,\alpha,A)$.
\end{proof}

\begin{ex}
  Let $(A,\cdot,\{\cdot,\cdot\},\alpha)$ be a transposed Hom-Poisson algebra and $R:A\longrightarrow
A$ a Rota-Baxter operator. Define new operations on $A$ by
$$x\diamond y=R(x)\cdot y+x\cdot R(y),\quad x\ast y=\{R(x),y\}.$$
Then, $(A,\diamond,\ast,\alpha)$ is a Hom-pre-Lie Poisson algebra and $R$ is a homomorphism from the sub-adjacent transposed Hom-Poisson algebra $(A,\diamond,\{\cdot,\cdot\}_C,\alpha)$ to $(A,\cdot,\{\cdot,\cdot\},\alpha)$, where $\{x,y\}_C=x\ast y-y\ast x$.
\end{ex}

\section{Acknowledgement} 
Support from The Royal Swedish Academy of Sciences Foundations is gratefully acknowledged.

\end{document}